\numberwithin{equation}{section}
\theoremstyle{plain}
        \newtheorem{theorem}[equation]{Theorem}
        \newtheorem{proposition}[equation]{Proposition}
	    \newtheorem{definition}[equation]{Definition}
        \newtheorem{sinnadaitalica}[equation]{}
\theoremstyle{definition}
        \newtheorem{remark}[equation]{Remark}
        \newtheorem{sinnadastandard}[equation]{}
\newcommand{\mr}[1]{\buildrel {#1} \over \longrightarrow}
\newcommand{\Mr}[1]{\buildrel {#1} \over \Longrightarrow}
\newcommand{\dr}[1]
          {
           \ar@<4pt>@{-}'+<0pt,-6pt>[d] 
           \ar@<-4pt>@{-}'+<0pt,-6pt>[d]^{#1}
          }
\newcommand{\dd}{\ar@2{-}[d]}
\newcommand{\op}[1]
          {
           \ar@{-}[ld] 
           \ar@{-}[rd] 
           \ar@{}[d]|{#1}  
          }
\newcommand{\cl}[1]
          { 
           \ar@{-}[ur] 
           \ar@{}[u]|{#1} 
           \ar@{-}[ul] 
          }
\newcommand{\ig}[1]
          {
           \ \ \ \ar@{}[d]|{\stackrel{#1}{=}}
          }
\newcommand{\X}{\ell X}
\newcommand{\dcell}[1]
          {
           \ar@<4pt>@{-}'+<0pt,-6pt>[d] 
           \ar@<-4pt>@{-}'+<0pt,-6pt>[d]^{#1}
          }
\newcommand{\dcellb}[1]
          {
           \ar@<5pt>@{-}'+<0pt,-6pt>[d] 
           \ar@<-5pt>@{-}'+<0pt,-6pt>[d]^{#1}
          }
\newcommand{\did}       
         {
          \ar@{=}[d]
         }
\newcommand{\pmr}[2]
{
\xymatrix@C=5ex@R=2.4ex
         {
          {} \ar@<1.6ex>[r]^{#1} 
	     \ar@<-1.1ex>[r]^{#2} & {}
         }
}
\newcommand{\pml}[2]
{
\xymatrix@C=5ex@R=2.4ex
         {
            {} 
          & {} \ar@<1.0ex>[l]_{#1} 
	       \ar@<-1.7ex>[l]_{#2}
         }
}
\newcommand{\cellr}[3]
{
\xymatrix@C=7ex@R=2.4ex
         {
          {} \ar@<1.6ex>[r]^{#1} 
          \ar@{}@<-1.3ex>[r]^{\!\! #2 \, \!\Downarrow}
                                         \ar@<-1.1ex>[r]_{#3} & {}
         }
}
\newcommand{\celll}[3]
{
\xymatrix@C=7ex@R=2.4ex
         {
            {} 
          & {} \ar@<1.0ex>[l]^{#1} 
          \ar@{}@<-1.7ex>[l]^{\!\! #2 \, \!\Downarrow}
	                                 \ar@<-1.7ex>[l]_{#3}
         }
}
\newcommand{\cc}{\mathcal}
\newcommand{\mono}{\hookrightarrow}
\newcommand{\mmr}[1]{\buildrel {#1} \over \hookrightarrow}
\newcommand{\cqd}{\hfill$\Box$}
	\def \Cat{\mathcal{C}}
	\def \Eat{\mathcal{E}}
	\def \Xat{\mathcal{X}}
	\def \Vat{\mathcal{V}}
	\def \Ens{\mathcal{E}ns}
	\def \be{\begin{enumerate}}
	\def \en{\end{enumerate}}
\begin{document}

\title{A Tannakian context for Galois}

\author{Eduardo J. Dubuc and Martin Szyld}

\begin{abstract}
Strong similarities have been long observed between the Galois (Categories Galoisiennes) and the Tannaka (Categories Tannakiennes) theories of representation of groups. In this paper we construct an explicit (neutral) Tannakian context for the Galois theory of atomic topoi, and prove the equivalence between its fundamental theorems. Since the theorem is known for the Galois context, this yields, in particular, a proof of the fundamental (recognition) theorem for a new Tannakian context. This example is different from the additive cases or their generalization, where the theorem is known to hold, and where the unit of the tensor product is always an object of finite presentation, which is not the case in our context.
\end{abstract}

\maketitle

{\bf Introduction.} Strong similarities have been long observed between the Galois (Categories Galoisiennes) and the Tannaka (Categories Tannakiennes) theories of representation of groups. In this paper we construct an explicit (neutral) Tannakian context for the Galois theory of atomic topoi, and prove the equivalence between its fundamental theorems. Since the theorem is known for the Galois context, this yields, in particular, a proof of the fundamental (recognition) theorem for a new Tannakian context. This example is different from the additive cases \cite{JS}, \cite{H}, \cite{C}, or their generalization \cite{SC}, where the theorem is known to hold, and where the unit of the tensor product is always an object of finite presentation (that is, filtered colimits in the tensor category are constructed as in the category of sets), which is not the case in our context.
Very different approaches to relate Tannaka with Galois are developed in \cite{R} and \cite{JAS}, where the existence of Galois closures (disguised in one form or another) is essential, and which cover Galois topoi but not the Joyal-Tierney extension to atomic topoi.

\vspace{1ex}

In this paper by Galois theory we mean Grothendieck's Galois theory of progroups (or prodiscrete localic groups) and Galois topoi \cite{G1}, \cite{G2}, as extended by 
Joyal-Tierney in \cite{JT}. More precisely, the particular case of arbitrary localic groups and pointed atomic topoi. 

For the Galois theory of atomic topoi we follow Dubuc \cite{D1}, where he develops \emph{localic Galois theory} and makes a explicit construction of the localic group of automorphisms $Aut(F)$ of a set-valued functor  $\cc{E} \mr{F} \cc{E}ns$, and of a lifting $\cc{E} \mr{\widetilde{F}} \beta^{Aut(F)}$ into the topos of sets furnished with an action of the localic group (see \ref{Aut(F)}). He proves in an elementary way\footnote{meaning, without recourse to change of base and other sophisticated tools of topos theory over an arbitrary base topos.} 
that when $F$ is the inverse image of a point of an atomic topos, this lifting is an equivalence \mbox{\cite[Theorem 8.3]{D1},} which is Joyal-Tierney's theorem \cite[Theorem 1]{JT}. 

For Tannaka theory we follow Joyal-Street \cite{JS} (for the original sources see the references therein). The construction of the Hopf algebra $End^\vee(T)$ of endomorphisms of a finite dimensional vector space valued functor $T$ can be developed for a $\cc{V}_0$-valued functor, \mbox{$\cc{X} \mr{T} \cc{V}_0 \subset \cc{V}$,} where $\cc{V}$ is any cocomplete monoidal closed category, and $\cc{V}_0$ a (small) full subcategory of objects with duals, see for example \cite{P}, \cite{SC}, \cite{SP}. There is a lifting 
\mbox{$\cc{X} \mr{\widetilde{T}} Cmd_0(End^\vee(T))$} into the category of \mbox{$End^\vee(T)$-comodules} with underlying object in $\cc{V}_0$. For a handy reference and terminology see \mbox{appendix \ref{appendix}.} In \cite{JS}, \cite{SP} it is shown that in the case of vector spaces, if 
$\cc{X}$ is abelian and $F$ is faithful and exact, the lifting is an equivalence (recognition theorem). 

\vspace{1ex}

{\bf Contents.}

Recall that given a regular category $\Cat$ we can consider the category $\cc{R}el(\Cat)$ of relations in $\Cat$. There is a faithful functor (the identity on objects) \mbox{$\Cat \rightarrow \cc{R}el(\Cat)$,} and any regular functor $\cc{C} \mr{F} \cc{D}$ has an extension 
\mbox{$\cc{R}el(\cc{C}) \mr{\cc{R}el(F)} \cc{R}el(\cc{D})$.}

The category $\cc{R}el = \cc{R}el(\cc{E}ns)$ is a full subcategory of the category $Sup$ of sup-lattices, set $\cc{R}el = Sup_0$. This determines the base $\cc{V},\cc{V}_0$ of a Tannaka context. Furthermore, a localic group is the same thing as an idempotent Hopf algebra in the category $Sup$ (see section \ref{background}).

Given any pointed topos with inverse image $\cc{E} \mr{F} \cc{E}ns$ of a Galois context, we associate a (neutral) Tannakian context as follows:
$$
\xymatrix
       {
	        \beta^G \ar[rd]_{} 
	      & \Eat \ar[l]_{\widetilde{F}} \ar[d]^{F} \ar[r] 
	      & Rel(\Eat) \ar[d]_{T} \ar[r]^{\widetilde{T}} 
	      & Cmd_0(H) \ar[dl]^{} 
	     \\
		  & \Ens \ar[r]
		  & \cc{R}el = Sup_0,
		 }
$$
where $G = Aut(F)$, $H = End^\vee(T)$, and $T = \cc{R}el(F)$.

\vspace{1ex}

We prove that \emph{$\widetilde{F}$ is an equivalence if and only if 
$\widetilde{T}$ is so} \mbox{(Theorem \ref{AA}).}  The result is based in two theorems. First, we prove that for any localic group $G$, there is an isomorphism of categories $\cc{R}el(\beta^G) \cong Cmd_0(G)$ 
(Theorem \ref{Comd=Rel}). Second, we prove that the Hopf algebra $End^\vee(T)$ is a locale, and that there is an isomorphism $Aut(F) \cong End^\vee(T)$ (Theorem \ref{G=H}).

In particular, from Theorem \ref{AA} and the fundamental theorem of localic Galois theory (Theorem \ref{fundamentalLGT}), it follows that the \emph{Tannaka recognition theorem holds in the (neutral) Tannaka context associated to a pointed topos if and only if the topos is connected atomic (Theorem \ref{BB}).} These topoi are then a new concrete example where the theorem holds wich is completely different than the other cases in which the Tannaka recognition theorem is known to hold, where the unit of the tensor product is an object of finite presentation. Simultaneously, the non atomic topoi furnish examples where the theorem is false.  

\vspace{1ex}

{\bf Acknowledgements.} The first author thanks Andr\'e Joyal for many stimulating and helpful discussions on the subject of this paper.


\section{Background, terminology and notation} \label{background}

In this section we recall some facts on sup-lattices, locales and
monoidal categories, and in this way we fix notation and terminology.

We will consider the monoidal category $Sup$ of \emph{sup-lattices},
whose objects are posets $S$ with arbitrary suprema $\bigvee$ (hence
finite infima $\wedge$, $0$ and $1$) and 
whose arrows are the suprema-preserving-maps. We call these arrows
\emph{linear maps}. We will write $S$ also
for the underlying set of the lattice.  
The \emph{tensor product} of two sup-lattices $S$ and $T$ is the codomain
 of the universal bilinear map $S \times T \mr{} S \otimes T$. Given 
 $(s,\,t) \in S \times T$, we denote the corresponding element in 
 $S \otimes T$ 
by $s \otimes t$. The unit for $\otimes$ is the sup-lattice $2=\{0 \leq 1\}$. The linear map $S \otimes T \stackrel{\psi}{\rightarrow} T \otimes S$ sending 
$s \otimes t \mapsto t \otimes s$ is a symmetry. 
Recall that, as in any monoidal category, a \emph{duality} between two sup-lattices $T$ and $S$ is a pair of arrows $2 \stackrel{\eta}{\rightarrow} T \otimes S$, 
$S \otimes T \stackrel{\varepsilon}{\rightarrow} 2$ satisfying the usual triangular equations (see \ref{triangular}). We say that $T$ is \emph{right dual} to $S$ and that $S$ is \emph{left dual} to $T$, and denote $T = S^\wedge$, $S = T^\vee$.

 There is a free sup-lattice functor $\Ens \mr{\ell} Sup$. Given  $X \in \cc{E}ns$, $\ell X$ is the power set of $X$, and for    
\mbox{$X \stackrel{f}{\rightarrow} Y$}, $\ell f = f$ is the direct image. This functor extends to a functor $\cc{R}el \mr{\ell} Sup$, defined on the category $\cc{R}el$ of sets with relations as morphisms. A linear map $\ell X \to \ell Y$ is the ``same thing'' as a relation $R \subset X \times Y$. In this way $\cc{R}el$ can be identified with a full subcategory $\cc{R}el \mmr{\ell} Sup$. We define $Sup_0$ as the full subcategory of $Sup$ of objects of
the form $\ell X$. Thus, abusing notation, $\cc{R}el = Sup_0 \subset Sup$ (``$=$'' here is actually an isomorphism of categories).
Recall that $\cc{R}el$ is a monoidal category with tensor product given by the cartesian product of sets (which is not a cartesian product in $\cc{R}el$). It is immediate to check that $\ell X \otimes \ell Y = \ell(X \times Y)$ in a natural way.
\begin{sinnadaitalica} \label{tensoriso}
The functor $\cc{R}el \mmr{\ell} Sup$ is a tensor functor, and the identification $\cc{R}el = Sup_0$ is an isomorphism of monoidal categories.
\end{sinnadaitalica}
The arrows
$2 \stackrel{\eta}{\rightarrow} \ell X \otimes \ell X$, \mbox{$\ell X
\otimes \ell X \stackrel{\varepsilon}{\rightarrow} 2$,} defined on the generators as $\eta(1) = \bigvee_x x \otimes x$ and 
\mbox{$\varepsilon (x\otimes y) = \delta_{x=y}$} determine a duality, and in this way the objects
of the form  $\ell X$ have both duals and furthermore they are self-dual, 
\mbox{$(\ell X)^\wedge = (\ell X)^\vee = \ell X$.} Under the isomorphism 
$\cc{R}el = Sup_0$, $\varepsilon$ and $\eta$ both correspond to the diagonal relation $\Delta \subset X \times X$. Duals are contravariant functors, if $R \subset X \times Y$ is the relation corresponding to a linear map 
\mbox{$\ell X \to \ell Y$,} then the opposite relation $R^{op} \subset Y \times X$ corresponds to the dual map 
$(\ell Y)^\wedge  \to (\ell X)^\wedge$.
\begin{sinnadaitalica} \label{ellX=X}
We will abuse notation by identifying  $X$, $\ell X$ and $(\ell X)^\wedge$, a function with its graph, and the 
inverse image of a function with the opposite relation.
\end{sinnadaitalica}
As in any monoidal category, an \emph{algebra} (or \emph{monoid})
in $Sup$ is an object $G$ with an associative multiplication $G \otimes G \mr{*} G$ which 
has a unit $u \in G$. If $*$ preserves the
symmetry $\psi$, the algebra is \emph{commutative}. An algebra morphism is a
linear map which preserves $*$ and $u$.

A \emph{locale} is a sup-lattice $G$ where finite infima $\wedge$
distributes over arbitrary suprema $\bigvee$, that is, it is bilinear, and
so induces a multiplication \mbox{$G \otimes G \mr{\wedge} G$.}
A locale morphism is a linear map which preserves \mbox{$\wedge$ and
$1$.} In
this way locales are commutative algebras, and there is a full inclusion
of categories $Loc \subset Alg_{Sup}$ into the category of commutative algebras in $Sup$.
\begin{sinnadaitalica} \label{charlocales}
In \cite{JT} locales are characterized as those commutative algebras such
that $x * x = x$ and $u = 1$.
\end{sinnadaitalica}

A (commutative) \emph{Hopf algebra} in $Sup$ is a group object in $(Alg_{Sup})^{op}$. 
A \emph{localic group} (resp. \emph{monoid}) $G$ is a group (resp. monoid) object in
the category $Sp$ of \emph{localic 
spaces}, which is defined to be the formal dual of the category of locales, $Sp =
Loc^{op}$. Therefore $G$ can be also considered as a Hopf algebra in
$Sup$. The unit and the multiplication of $G$ in $Sp$ are the counit $G \mr{e} 2$ and comultiplication 
$G \mr{w} G \otimes G$ of a coalgebra structure
for $G$ in $Alg_{Sup}$. The inverse is an antipode 
$G \mr{\iota} G$. Morphisms correspond but change direction, and we actually have a contravariant equality of
categories \mbox{$(Id$-$Hopf)^{op} = Loc$-$Group$,} between the category of
idempotent (with $u=1$) Hopf algebras in $Sup$ and the category of localic groups.  

\section{Preliminaries on bijections with values in a locale} \label{sec:general}

As usual we view a relation $\lambda$ between two sets $X$ and $Y$ as
a map (i.e. function of sets) $X \times Y \mr{\lambda}2$. We consider maps $X \times Y \mr{\lambda} G$ with values in an arbitrary sup-lattice $G$, that we will call \emph{$\ell$-relations}. Since $\ell(X \times Y) = \ell X \otimes \ell Y$, it follows that $\ell$-relations are the same thing that linear maps $\ell X \otimes \ell Y \mr{\lambda} G$. 
The results of this section are established in order to be used in the 
next sections, and they are needed only in the case $X=Y$.

\begin{sinnadastandard} \label{diagramadiamante12}
Consider two $\ell$-relations $X \times Y \mr{\lambda} G$, \mbox{$X' \times
Y'\mr{\lambda'}G$,} and two maps $X \mr{f} X'$, $Y \mr{g} Y'$, or, more generally, two spans (which induce relations that we also denote with the same letters), 
$$
\xymatrix@C=3ex@R=1ex
         {
         {} & R \ar[dl]_{p} \ar[dr]^{p'}
         \\
         X & {} & X',
         }
\hspace{3ex}                  
\xymatrix@C=3ex@R=1ex
         {
         {} & S \ar[dl]_{q} \ar[dr]^{q'}
         \\
         Y & {} & Y'\;
         }
\hspace{3ex}
\xymatrix@C=2ex@R=1ex
         {
          {}
          \\
          R = p' \circ p^{op}, \;\; S = q' \circ q^{op}\,,
         }
$$
and a third $\ell$-relation $R \times S \mr{\theta} G$. 

These data give rise to the following diagrams:
\begin{equation} \label{triangleequation}
\xymatrix@C=1.8ex@R=3ex
        {
         \hspace{-1ex} \Diamond_1 = \Diamond_1(f,g)  
         &&&& \Diamond_2 = \Diamond_2(f,g) 
         &&&& \Diamond = \Diamond(R,S)
        }
\end{equation}
\hspace{-3ex}
\mbox{
$
       {
        \xymatrix@C=1.4ex@R=3ex
                 {
                  & X \times Y  \ar[rd]^{\lambda} 
                  \\
			        X \times Y' \ar[rd]_{f \times Y'} 
			                    \ar[ru]^{X \times g^{op}} & \equiv & G\,,
			      \\
			       & X' \times Y' \ar[ru]_{\lambda '} 
			      } 
	    }
$
$
       {
        \xymatrix@C=1.4ex@R=3ex
                 {
                  & X \times Y  \ar[rd]^{\lambda} 
                  \\
			        X' \times Y \ar[rd]_{X' \times g} 
			                    \ar[ru]^{f^{op} \times Y} & \equiv & G\,,
			      \\
			       & X' \times Y' \ar[ru]_{\lambda '} 
			      } 
	    }
$
$
       {
        \xymatrix@C=1.4ex@R=3ex
                 {
                  & X \times Y  \ar[rd]^{\lambda} 
                  \\
			        X \times Y' \ar[rd]_{R \times Y'} 
			                    \ar[ru]^{X \times S^{op}} & \equiv & G\,,
			      \\
			       & X' \times Y' \ar[ru]_{\lambda '} 
			      } 
	    }
$
}

\vspace{1ex}

expressing the equations: 

\vspace{1ex}

\begin{center} 
$\hspace{2ex} \Diamond_1:\,  
\lambda'\langle f(a),b' \rangle  \,=\, \displaystyle \bigvee_{g(y)=b'} \lambda\langle a,y \rangle \:,$   
$\hspace{2ex} \Diamond_2:\, 
\lambda'\langle a',g(b) \rangle  \,=\, \displaystyle \bigvee_{f(x)=a'} \lambda\langle x,b \rangle$,
\end{center}

and $\hspace{1ex} \Diamond$:
$\hspace{1ex}  
           \displaystyle \bigvee_{(y,\, b')\in S} \lambda\langle a,y \rangle \;\;      
         = \displaystyle \bigvee_{(a,\, x')\in R} \lambda'\langle x',b' \rangle$.
\end{sinnadastandard}

It is clear that diagrams $\Diamond_1$ and  $\Diamond_2$ are particular cases of \mbox{diagram $\Diamond$.} Take $R = f,\; S = g$, then $\Diamond_1(f,g) = \Diamond(f, g)$, and  $R = f^{op},\; S = g^{op}$, then  \mbox{$\Diamond_2(f,g) = \Diamond(f^{op}, g^{op})$.} The general $\Diamond$ diagram follows from these two particular cases. 

\begin{proposition} \label{pre2rhdimpliesdiamante}
Let $R$, $S$ be any two spans connected  by a $\ell$-relation $\theta$ as above. If  $\Diamond_1(p',q')$ and $\Diamond_2(p,q)$ hold, then so does 
$\Diamond(R,S)$.
\end{proposition}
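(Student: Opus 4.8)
\emph{Proof proposal.} The plan is to verify $\Diamond(R,S)$ directly: expand the two composite $\ell$-relations appearing on either side of that equation in terms of suprema in $G$, and then substitute the two hypotheses to collapse both sides to one and the same expression built from $\theta$. No structure on $\theta$ beyond its being an $\ell$-relation is needed, and the earlier remark that $\Diamond_1,\Diamond_2$ are special cases of $\Diamond$ is not used.

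First I would rewrite the hypotheses in element notation. Since $p\colon R\to X$, $p'\colon R\to X'$, $q\colon S\to Y$, $q'\colon S\to Y'$ are honest maps (not merely spans), the diamond $\Diamond_1(p',q')$ for the $\ell$-relations $\theta$ on $R\times S$ and $\lambda'$ on $X'\times Y'$ reads
\[
\lambda'\langle p'(r),b'\rangle \;=\; \bigvee_{q'(s)=b'}\theta\langle r,s\rangle \qquad (r\in R,\ b'\in Y'),
\]
while $\Diamond_2(p,q)$ for $\theta$ on $R\times S$ and $\lambda$ on $X\times Y$ reads
\[
\lambda\langle a,q(s)\rangle \;=\; \bigvee_{p(r)=a}\theta\langle r,s\rangle \qquad (a\in X,\ s\in S).
\]

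Next I would unwind $\Diamond(R,S)$ for fixed $a\in X$, $b'\in Y'$. For the left-hand side, $S=q'\circ q^{op}$ means $(y,b')\in S$ iff $y=q(s)$ and $b'=q'(s)$ for some $s\in S$; since a supremum in $G$ depends only on its set of values, $\bigvee_{(y,b')\in S}\lambda\langle a,y\rangle=\bigvee_{q'(s)=b'}\lambda\langle a,q(s)\rangle$, and substituting $\Diamond_2(p,q)$ this becomes $\bigvee_{q'(s)=b'}\bigvee_{p(r)=a}\theta\langle r,s\rangle=\bigvee_{p(r)=a,\,q'(s)=b'}\theta\langle r,s\rangle$. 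Symmetrically, $R=p'\circ p^{op}$ gives $\bigvee_{(a,x')\in R}\lambda'\langle x',b'\rangle=\bigvee_{p(r)=a}\lambda'\langle p'(r),b'\rangle$, and substituting $\Diamond_1(p',q')$ yields $\bigvee_{p(r)=a}\bigvee_{q'(s)=b'}\theta\langle r,s\rangle$, which is the same double supremum. Hence both sides of $\Diamond(R,S)$ equal $\bigvee\{\theta\langle r,s\rangle : p(r)=a,\ q'(s)=b'\}$, which proves the claim.

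The only point requiring care — and where I expect the bookkeeping to be slightly delicate — is the passage between the span objects $R,S$ and the relations they induce: one must consistently translate ``$(y,b')\in S$'' into ``$\exists\,s,\ q(s)=y,\ q'(s)=b'$'', and dually for $R$, and note that the accompanying reindexing of the suprema (which may introduce repeated terms) is harmless. Once the hypotheses are written in the displayed element form, the rest is a mechanical manipulation of nested joins.
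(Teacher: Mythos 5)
Your proof is correct and is essentially the paper's own argument: the paper's elevator-calculus chain passes through exactly your middle expression $\bigvee_{p(r)=a,\,q'(s)=b'}\theta\langle r,s\rangle$ (it factors $S^{op}$ through $q,q'$, applies $\Diamond_2(p,q)$, then $\Diamond_1(p',q')$, then recomposes $R=p'\circ p^{op}$), only written diagrammatically rather than elementwise. Your meet-in-the-middle organization and the translation between span legs and induced relations are both sound.
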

\begin{proof}
We use the elevators calculus, see appendix \ref{ascensores}:
$$
\xymatrix@C=-1.5ex
         {
          \\
             X \did & & Y' \dcellb{\!\!S^{op}}
          \\              
             X & & Y
          \\
            & G \cl{\lambda} 
         }
\xymatrix@R=10ex{ \\ \;\;=\;\; \\}
\xymatrix@C=-1.5ex
         {
             X \did & & Y' \dcellb{\!\!q'^{op}}
          \\
             X \did & & S  \dcell{q}
          \\
             X & & Y
          \\
           & G \cl{\lambda}
         }
\xymatrix@R=10ex{ \\ \;\;\stackrel{\Diamond_2}{=}\;\; \\}
\xymatrix@C=-1.5ex
         {
             X \did & & Y' \dcellb{\!\!q'^{op}}
          \\
             X \dcellb{\!\!p^{op}} & & S  \did
          \\
             R & & S
          \\
            & G \cl{\theta}
         }
\xymatrix@R=10ex{ \\ \;\;=\;\; \\}
\xymatrix@C=-1.5ex
         {
             X \dcellb{\!p^{op}} & & Y' \did
          \\
             R \did  & & Y' \dcellb{\!\!q'^{op}}
          \\
	      R & & S
          \\
            & G \cl{\theta}
         }
\xymatrix@R=10ex{ \\ \;\;\stackrel{\Diamond_1}{=}\;\; \\}
\xymatrix@C=-1.5ex
         {
             X \dcellb{\!p^{op}} & & Y' \did
          \\
             R \dcell{\!p'} & & Y' \did
          \\
             X' & & Y'
          \\
           & G \cl{\lambda'}
         }
\xymatrix@R=10ex{ \\ \;\;=\;\; \\}
\xymatrix@C=-1.5ex
         {
          \\
             X \dcell{R} & & Y' \did
          \\              
             X' & & Y'
          \\
           & G \cl{\lambda'}
         }
$$ 
\end{proof}
Two maps $X \mr{f} X'$, $Y \mr{g} Y'$ also give rise to the following diagram:
$$
\rhd = \rhd(f,g): \hspace{5ex}
\vcenter
        {
         \xymatrix@C=3ex@R=1ex
                   {
                    X \times Y  \ar[rrd]^{\lambda} 
                                \ar[dd]_{f \times g}
                    \\
                    {} & \hspace{-4ex} {}^{\!\!\geq} & G \,.
                    \\
                    X' \times Y'  \ar[rru]_{\lambda '}
                   }
        }
$$
\begin{proposition} \label{diamondimpliesrhd}
If either $\Diamond_1(f,\,g)$ or $\Diamond_2(f,\,g)$ holds, then so does $\;\rhd(f,\,g)$.
\end{proposition}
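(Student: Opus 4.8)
The plan is to reduce $\rhd(f,g)$ to a single pointwise inequality and then read it off from the hypothesis. Unwinding the diagram $\rhd(f,g)$, and recalling the convention \ref{ellX=X} that a function is identified with its graph so that $f\times g$ is being composed in $\cc{R}el$, the composite through $X'\times Y'$ is the $\ell$-relation $(a,b)\mapsto\lambda'\langle f(a),g(b)\rangle$, and the sign ``$\geq$'' asserts exactly that it dominates $\lambda$, i.e.
$$\lambda\langle a,b\rangle \;\leq\; \lambda'\langle f(a),g(b)\rangle \quad\text{for all } a\in X,\ b\in Y .$$

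Assume first that $\Diamond_1(f,g)$ holds. Setting $b'=g(b)$ in its defining equation $\lambda'\langle f(a),b'\rangle=\bigvee_{g(y)=b'}\lambda\langle a,y\rangle$ gives $\lambda'\langle f(a),g(b)\rangle=\bigvee_{g(y)=g(b)}\lambda\langle a,y\rangle$, and since $y=b$ is one of the indices appearing in this supremum, the right-hand side is $\geq\lambda\langle a,b\rangle$, which is what we want. The case in which $\Diamond_2(f,g)$ holds is symmetric: put $a'=f(a)$ in $\lambda'\langle a',g(b)\rangle=\bigvee_{f(x)=a'}\lambda\langle x,b\rangle$ and note that $x=a$ occurs among the indices.

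Alternatively, to stay in the diagrammatic style of Proposition \ref{pre2rhdimpliesdiamante}, one can run the same argument in the elevators calculus: factor $f\times g=(f\times\rr{id})\circ(\rr{id}\times g)$, use $\Diamond_1(f,g)$ to rewrite $\lambda'\circ(f\times\rr{id})$ as $\lambda\circ(\rr{id}\times g^{op})$, so that $\lambda'\circ(f\times g)$ becomes $\lambda\circ(\rr{id}\times(g^{op}\circ g))$; since $\Delta\leq g^{op}\circ g$ for the graph of any function and linear maps are monotone, this dominates $\lambda$. The $\Diamond_2(f,g)$ case is the mirror image, factoring instead through $f^{op}\circ f\geq\Delta$.

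I do not expect a genuine obstacle here: the content is a one-line supremum estimate. The only points requiring care are fixing the orientation of the ``$\geq$'' in $\rhd$ and respecting the relation-composition conventions of Section \ref{sec:general}, both of which are routine bookkeeping.
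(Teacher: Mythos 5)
Your proof is correct and is essentially the paper's own argument: instantiate $\Diamond_1$ at $b'=g(b)$ (resp. $\Diamond_2$ at $a'=f(a)$) and observe that $\lambda\langle a,b\rangle$ is one term of the resulting supremum. The diagrammatic rephrasing via $\Delta\leq g^{op}\circ g$ is a fine alternative packaging of the same one-line estimate.
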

\begin{proof}
$\lambda\langle a,b \rangle  \leq \displaystyle \bigvee_{g(y)=g(b)} \lambda\langle a,y \rangle  =
  \lambda'\langle f(a),g(b) \rangle $ using $\Diamond_1$. Clearly a symmetric arguing holds
  using $\Diamond_2$. 
\end{proof}
\noindent {\bf In the rest of this section $G$ is assumed to be a locale.}

Consider the following axioms:
\begin{sinnadaitalica} {\bf Axioms on a $\ell$-relation} \label{bijection}

 \vspace{1ex}

$ed) \; \bigvee_{y\in Y} \;\lambda\langle a,y \rangle  = 1$,  
\hspace{6ex} for each a \hfill (everywhere defined).

\vspace{1ex}

$uv) \; \lambda\langle x,b_1 \rangle  \wedge \lambda\langle x,b_2 \rangle  = 0$, \hspace{2ex} for each $x, b_1 \neq b_2$  \hfill (univalued).

\vspace{1ex}

$su) \; \bigvee_{x \in X} \;\lambda\langle x,b \rangle  = 1$, 
\hspace{6ex} for each b \hfill (surjective). 

\vspace{1ex}

$in) \; \lambda\langle a_1,y \rangle  \wedge \lambda\langle a_2,y \rangle  = 0$, \hspace{2ex} for each $y, a_1 \neq a_2$  \hfill (injective).
\end{sinnadaitalica}

Clearly any morphism of locales $G \to H$ preserves these four axioms.

\vspace{1ex}

A $\ell$-relation $\lambda$ is a \emph{$\ell$-function} if and only if satisfies
axioms $ed)$ and $uv)$. We say that a $\ell$-relation is a \emph{$\ell$-opfunction}
when it satisfies   
axioms $su)$ and $in)$. Then a $\ell$-relation is a \emph{$\ell$-bijection} if and only
if it is a $\ell$-function and a $\ell$-opfunction.  

\begin{sinnadastandard} \label{productrelationdef}
Given two $\ell$-relations, $X \times Y \mr{\lambda} G$, $X' \times
Y'\mr{\lambda'}G$, the product $\ell$-relation $\lambda \boxtimes \lambda'$ is defined by the composition

\begin{center}
$X \times X' \times Y \times Y' 
 \mr{X \times \psi \times Y'} X \times Y \times X' \times Y' 
 \mr{\lambda \times \lambda'} G \times G \mr{\wedge} G$ 
\end{center}

\begin{center}
$
(\lambda \boxtimes \lambda')  \langle (a,a'),(b,b') \rangle  = \lambda  \langle a,b \rangle  \wedge \lambda'  \langle a',b' \rangle$. 
\end{center}
\end{sinnadastandard}
The following is immediate and straightforward:
\begin{proposition} \label{productrelation}
Each axiom in \ref{bijection} for $\lambda$ and $\lambda'$ implies the respective axiom for the product $\lambda \boxtimes \lambda'$. \cqd 
\end{proposition}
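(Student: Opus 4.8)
The plan is to verify each of the four axioms in \ref{bijection} for the product $\ell$-relation $\lambda \boxtimes \lambda'$ separately, using the defining formula $(\lambda \boxtimes \lambda') \langle (a,a'),(b,b') \rangle = \lambda \langle a,b \rangle \wedge \lambda' \langle a',b' \rangle$ together with the fact that $G$ is a locale, so that $\wedge$ distributes over arbitrary $\bigvee$. The proof is entirely a matter of elementary manipulation of suprema and infima in $G$, so I will only indicate the computations.

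For axiom $ed)$: fixing $(a,a')$, we compute
$\bigvee_{(y,y') \in Y \times Y'} \lambda \langle a,y \rangle \wedge \lambda' \langle a',y' \rangle = \big( \bigvee_{y} \lambda \langle a,y \rangle \big) \wedge \big( \bigvee_{y'} \lambda' \langle a',y' \rangle \big) = 1 \wedge 1 = 1$,
where the first equality uses the distributivity of $\wedge$ over $\bigvee$ in the locale $G$ (applied twice), and the second uses $ed)$ for $\lambda$ and for $\lambda'$. For axiom $su)$ the argument is identical with the roles of the first and second coordinates of each pair swapped. For axiom $uv)$: fix $(x,x')$ and $(b_1,b_1') \neq (b_2,b_2')$, so that either $b_1 \neq b_2$ or $b_1' \neq b_2'$. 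Then
$\big( \lambda \langle x,b_1 \rangle \wedge \lambda' \langle x',b_1' \rangle \big) \wedge \big( \lambda \langle x,b_2 \rangle \wedge \lambda' \langle x',b_2' \rangle \big) = \big( \lambda \langle x,b_1 \rangle \wedge \lambda \langle x,b_2 \rangle \big) \wedge \big( \lambda' \langle x',b_1' \rangle \wedge \lambda' \langle x',b_2' \rangle \big)$
after reordering the meet (commutativity and associativity of $\wedge$); in the case $b_1 \neq b_2$ the first factor is $0$ by $uv)$ for $\lambda$, and in the case $b_1' \neq b_2'$ the second factor is $0$ by $uv)$ for $\lambda'$; either way the product is $0$. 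Axiom $in)$ is handled the same way as $uv)$, exchanging the coordinate roles.

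There is essentially no obstacle here; the only point requiring any care is the bookkeeping in the disjunctive case for $uv)$ and $in)$ — namely that $(b_1,b_1') \neq (b_2,b_2')$ only guarantees that \emph{at least one} of the coordinates differs, so one must split into the two cases and observe that in each the relevant factor vanishes. Since a meet is $\leq$ each of its factors (or, reordering, equals that factor meet the rest), a single vanishing factor forces the whole meet to be $0$. This completes the verification of all four axioms, and hence the proposition.
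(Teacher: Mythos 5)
Your verification is correct and is exactly the routine computation the paper has in mind when it declares the proposition ``immediate and straightforward'' and omits the proof: distributivity of $\wedge$ over $\bigvee$ in the locale $G$ for $ed)$ and $su)$, and the case split on which coordinate of $(b_1,b_1')\neq(b_2,b_2')$ differs for $uv)$ and $in)$. Nothing to add.
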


\begin{proposition} \label{infessup}
We refer to \ref{diagramadiamante12}: If equations $\Diamond_1(p,q)$ and $\Diamond_1(p',q')$ hold, and $\theta$ satisfies $uv)$, then equation 1) below holds. \mbox{Symmetrically,} if  
$\Diamond_2(p,q)$ and $\Diamond_2(p',q')$ hold, and $\theta$ satisfies 
$in)$, then equation 2) below holds.
\begin{center}
$
1) \hspace{5ex}
    \lambda \langle p(r), b \rangle \wedge 
    \lambda' \langle p'(r), b' \rangle  \,=\, 
    \displaystyle\bigvee_{\substack{q(v) = b \\ q'(v) = b'}} \theta 
    \langle  r, v \rangle.
$
\vspace{1ex}
$
2) \hspace{5ex}
    \lambda \langle a, q(s) \rangle \wedge 
    \lambda' \langle  a', q'(s) \rangle  \,=\, 
    \displaystyle\bigvee_{\substack{p(u) = a \\ p'(u) = a'}} \theta 
    \langle  u, s \rangle.
$
\end{center}
\end{proposition}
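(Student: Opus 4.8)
The plan is to prove equation 1) by the elevators calculus, exactly as in the proof of Proposition \ref{pre2rhdimpliesdiamante}, since equation 1) is essentially a ``weighted'' refinement of the diamond equation $\Diamond(R,S)$ and the hypotheses here are precisely the pieces of that proof that get used. The key new input is that $G$ is now a locale, so $\wedge$ is itself a linear (bilinear) operation $G \otimes G \mr{\wedge} G$, and the left-hand side $\lambda\langle p(r),b\rangle \wedge \lambda'\langle p'(r),b'\rangle$ can be read as the composite of $\lambda \boxtimes \lambda'$-type data; more precisely one works with the single $\ell$-relation on $R \times R$ (or rather its relevant legs) obtained by pairing $\lambda$ precomposed with $p$ and $\lambda'$ precomposed with $p'$ and then applying $\wedge$.

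First I would set up the two sides of equation 1) as string diagrams (elevators) with inputs $r \in R$ on one rail and $b,b'$ on the others, with $\wedge: G \otimes G \to G$ as the bottom node. On the left side the two $\lambda$-wires are fed by $p$ and $p'$ out of a single copy of $R$ (duplicated via the diagonal, legal because we are in $\cc{R}el$ / $Sup$), giving $\lambda\langle p(r),b\rangle \wedge \lambda'\langle p'(r),b'\rangle$. On the right side I want $\bigvee_{q(v)=b,\,q'(v)=b'} \theta\langle r,v\rangle$, i.e. $\theta$ fed by $r$ and then by the span $S$ composed with the relation $\{(v,(b,b')) : q(v)=b, q'(v)=b'\}$, which is $\langle q,q'\rangle$ as a function $S \to Y \times Y'$, or its opposite relation on the other side. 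Then I would rewrite the left side: first slide $\lambda$ along $\Diamond_1(p,q)$ to introduce the span $S$ on the first factor (replacing $X \times g^{op}$-type moves by $X \times S^{op}$-type moves, paying with $\theta$ in place of $\lambda$ on the relevant region), and similarly slide $\lambda'$ along $\Diamond_1(p',q')$; the $uv)$ axiom on $\theta$ is what collapses the resulting double sum over $S$ into a single sum over the pullback $\{q(v)=b\} \times \{q'(v)=b'\}$ — this is the step where $\wedge$ of two $\theta$-terms along distinct second coordinates vanishes, so only the diagonal $v$ survives. Equation 2) is then obtained by the mirror-image computation using $\Diamond_2(p,q)$, $\Diamond_2(p',q')$ and the $in)$ axiom on $\theta$.

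The main obstacle I anticipate is bookkeeping: making the elevator manipulation rigorous when a single rail ($R$, resp. the output $G$) is being duplicated and two different diamond identities are applied on the two copies, and in particular checking that the $uv)$ (resp. $in)$) axiom on $\theta$ applies in the form ``$\theta\langle r,v_1\rangle \wedge \theta\langle r,v_2\rangle = 0$ unless $v_1 = v_2$'' — note that $uv)$ as stated in \ref{bijection} is about distinct second coordinates $b_1 \neq b_2$, so one needs that the relevant indices coming out of $S$ under $\langle q,q'\rangle$ are distinguished by the constraint, i.e. that $(q(v),q'(v))$ determines the summation cell; this is exactly what $uv)$ on $\theta$ gives once we observe the sum is over $v$ with fixed $(b,b')$. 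I would keep the diagram small by working with the legs $p^{op}, p'^{op}$ of the spans only where needed and invoking Proposition \ref{pre2rhdimpliesdiamante}'s computation as a template rather than redoing it from scratch; the whole argument should be a half-page of elevator diagrams with one ``$\stackrel{uv)}{=}$'' (resp. ``$\stackrel{in)}{=}$'') step doing the essential work.
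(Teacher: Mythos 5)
Your proposal is correct and is essentially the paper's own proof: apply $\Diamond_1(p,q)$ and $\Diamond_1(p',q')$ to each factor of the left-hand side, use that $\wedge$ distributes over $\bigvee$ in the locale $G$ to get a double supremum over pairs $(v,w)\in S\times S$ with $q(v)=b$, $q'(w)=b'$, and then collapse to the diagonal $v=w$ since $uv)$ for $\theta$ gives $\theta\langle r,v\rangle\wedge\theta\langle r,w\rangle=0$ whenever $v\neq w$. The only difference is presentational — the paper writes this as a three-step elementwise computation rather than in elevator diagrams, and your worry about whether $(q(v),q'(v))$ ``determines the summation cell'' is moot, since $uv)$ applies to arbitrary distinct elements of $S$ regardless of the constraints.
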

\begin{proof}
We only prove the first statement, since the second one clearly has a symmetric proof.
\vspace{1ex}

$
\lambda \langle p(r), b \rangle \wedge 
    \lambda' \langle p'(r), b' \rangle   
    \; \stackrel{\Diamond_1}{=} \; \displaystyle \bigvee_{q(v)=b} \theta \langle r,v \rangle 
\,\wedge  
\displaystyle \bigvee_{q'(w)=b'} \theta \langle r,w \rangle  \; = \; 
$

\hfill
$
= \; \displaystyle\bigvee_{\substack {q(v) = b \\ q'(w) = b'}} \theta 
    \langle  r, v \rangle  \wedge \theta \langle  r, w \rangle \; 
    \stackrel{uv)}{=} \;
 \displaystyle\bigvee_{\substack {q(v) = b \\ q'(v) = b'}} \theta 
    \langle  r, v \rangle.
$
\end{proof}   

We study now the validity of the reverse implication in proposition \ref{diamondimpliesrhd}.

\begin{proposition} \label{rhdimpliesdiamond} 
We refer to \ref{diagramadiamante12}:
${}$

1) If $\lambda$ is ed) and $\lambda'$ is uv) (in particular, if they are $\ell$-functions), then $\rhd(f,g)$ implies $\Diamond_1(f,g)$. 
 
\vspace{1ex}

2) If $\lambda$ is su) and $\lambda'$ is in) (in particular, if they are $\ell$-opfunctions), then $\rhd(f,g)$ implies $\Diamond_2(f,g)$. 
\end{proposition}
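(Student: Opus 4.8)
The plan is to prove part 1) by a direct computation with the elevators calculus, exactly as in Proposition \ref{pre2rhdimpliesdiamante}, and then obtain part 2) by the symmetry between $\Diamond_1$ and $\Diamond_2$ (swapping the roles of suprema-everywhere-defined with surjective, and univalued with injective, and reversing the relevant relations). For part 1), the goal is the equation $\Diamond_1(f,g)$, namely $\lambda'\langle f(a),b'\rangle = \bigvee_{g(y)=b'}\lambda\langle a,y\rangle$. By Proposition \ref{diamondimpliesrhd} we already know one inequality is not what we want directly — rather, $\rhd(f,g)$ itself gives $\lambda\langle a,y\rangle \leq \lambda'\langle f(a),g(y)\rangle$, hence $\bigvee_{g(y)=b'}\lambda\langle a,y\rangle \leq \lambda'\langle f(a),b'\rangle$, which is the $\leq$ direction of $\Diamond_1$. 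So the real content is the reverse inequality $\lambda'\langle f(a),b'\rangle \leq \bigvee_{g(y)=b'}\lambda\langle a,y\rangle$.

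For this reverse inequality, the idea is to start from $\lambda'\langle f(a),b'\rangle$ and insert the identity $1 = \bigvee_{y\in Y}\lambda\langle a,y\rangle$ coming from axiom $ed)$ on $\lambda$, using that $G$ is a locale so that $\wedge$ distributes over this supremum:
\[
\lambda'\langle f(a),b'\rangle \;=\; \lambda'\langle f(a),b'\rangle \wedge \bigvee_{y\in Y}\lambda\langle a,y\rangle \;=\; \bigvee_{y\in Y}\Big(\lambda'\langle f(a),b'\rangle \wedge \lambda\langle a,y\rangle\Big).
\]
Now split the supremum according to whether $g(y)=b'$ or not. When $g(y)=b'$, the term is $\leq \lambda\langle a,y\rangle$, which is dominated by the right-hand side of $\Diamond_1$. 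When $g(y)\neq b'$, I claim the term vanishes: indeed $\rhd(f,g)$ gives $\lambda\langle a,y\rangle \leq \lambda'\langle f(a),g(y)\rangle$, so the term is $\leq \lambda'\langle f(a),b'\rangle \wedge \lambda'\langle f(a),g(y)\rangle$, and since $b'\neq g(y)$ this is $0$ by axiom $uv)$ on $\lambda'$. Combining, $\lambda'\langle f(a),b'\rangle \leq \bigvee_{g(y)=b'}\lambda\langle a,y\rangle$, which together with the previous paragraph gives $\Diamond_1(f,g)$. Part 2) follows by the dual argument: use $su)$ on $\lambda$ to write $1=\bigvee_{x\in X}\lambda\langle x,b\rangle$, distribute $\lambda'\langle a',g(b)\rangle \wedge (-)$ over it, and kill the off-fiber terms using $\rhd$ together with $in)$ on $\lambda'$.

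The only mildly delicate point — and the step I expect to need the most care — is making sure the localic distributivity $x\wedge\bigvee y_i = \bigvee(x\wedge y_i)$ is applied legitimately and that the two fiber-splittings ($g(y)=b'$ versus $g(y)\neq b'$) genuinely partition the index set so that $\bigvee_{y} = \bigvee_{g(y)=b'} \vee \bigvee_{g(y)\neq b'}$; both are immediate here since $G$ is a locale and $Y$ is an honest set, so there is no real obstacle, just bookkeeping. The parenthetical remarks ("in particular, if they are $\ell$-functions" and "$\ell$-opfunctions") are then automatic, since an $\ell$-function satisfies $ed)$ and $uv)$ by definition and an $\ell$-opfunction satisfies $su)$ and $in)$.
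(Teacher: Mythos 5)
Your argument is correct and is essentially the paper's own proof: both insert $1=\bigvee_{y}\lambda\langle a,y\rangle$ via $ed)$, distribute over the supremum in the locale $G$, kill the terms with $g(y)\neq b'$ using $\rhd$ together with $uv)$ on $\lambda'$, and handle the remaining terms with $\rhd$; the paper merely writes this as a single chain of equalities rather than two inequalities. (Your opening reference to the elevators calculus is a red herring --- the computation you actually give, like the paper's, is element-wise --- but this does not affect correctness.)
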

\begin{proof}
We prove $1)$, a symmetric proof yields $2)$.

$\lambda'\langle f(a),b' \rangle  \stackrel{ed)_{\lambda}}{\;=\;} \lambda'\langle f(a),b' \rangle  \wedge \bigvee_{y} \lambda\langle a,y \rangle  \;=\; \bigvee_{y}
\lambda'\langle f(a),b' \rangle  \wedge \lambda\langle a,y \rangle  \stackrel{(*)}{\;=\;}$ 

\vspace{1ex}

{\hfill $\bigvee_{g(y)=b'} \lambda'\langle f(a),b' \rangle  \wedge \lambda\langle a,y \rangle  \stackrel{\rhd}{\;=\;} \bigvee_{g(y)=b'} \lambda\langle a,y \rangle ,$}

\vspace{1ex}

\noindent where for the equality marked with $(*)$ we used that if $g(y) \neq b'$ then \mbox{$\lambda'\langle f(a),b' \rangle  \wedge \lambda\langle a,y \rangle \stackrel{\rhd}{\leq} \lambda'\langle f(a),b' \rangle  \wedge \lambda'\langle f(a),g(y) \rangle  \stackrel{uv)_{\lambda'}}{=} 0$.}
\end{proof}

More generally, consider two spans as in \ref{diagramadiamante12}. We have the following \mbox{$\rhd$ diagrams:} 
\begin{equation} \label{2rhd}
         \xymatrix@C=3ex@R=1ex
                   {
                    R \times S  \ar[rrd]^{\theta} 
                                \ar[dd]_{p \times q}
                    \\
                    {} & \hspace{-4ex} {}^{\!\!\geq} & G\,,
                    \\
                    X \times Y  \ar[rru]_{\lambda}
                   } 
\hspace{5ex}       
         \xymatrix@C=3ex@R=1ex
                   {
                    R \times S  \ar[rrd]^{\theta} 
                                \ar[dd]_{p' \times q'}
                    \\
                    {} & \hspace{-4ex} {}^{\!\!\geq} & G \,.
                    \\
                    X' \times Y'  \ar[rru]_{\lambda '}
                   }
\end{equation}

\begin{proposition} \label{2rhdimpliesdiamante}

We refer to \ref{diagramadiamante12}: Assume that  
$\lambda$ is in), 
$\lambda '$ is uv), and that the 
$\rhd(p,\,q)$, $\rhd(p',\,q')$ diagrams hold. Then  if 
$\theta$ is ed) and su), diagram $\Diamond(R,\,S)$ holds.
\end{proposition}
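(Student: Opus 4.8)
The plan is to deduce $\Diamond(R,S)$ from Proposition~\ref{pre2rhdimpliesdiamante}, which already tells us that $\Diamond_1(p',q')$ together with $\Diamond_2(p,q)$ implies $\Diamond(R,S)$. So it suffices to extract these two partial diamond equations from the hypotheses, and the tool for that is Proposition~\ref{rhdimpliesdiamond}, read with the $\ell$-relation on the common apex of the spans (namely $\theta$ on $R\times S$) playing the role of ``$\lambda$'' and the $\ell$-relation on the feet playing the role of ``$\lambda'$''.

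First I would produce $\Diamond_2(p,q)$. Apply part~2 of Proposition~\ref{rhdimpliesdiamond} to the maps $p\colon R\to X$, $q\colon S\to Y$, connecting $\theta$ (in the ``$\lambda$'' slot) to $\lambda$ (on $X\times Y$, in the ``$\lambda'$'' slot). Its hypotheses ask that the source relation be su) and the target relation be in); here these become $\theta$ is su) and $\lambda$ is in), both of which are assumed. Since $\rhd(p,q)$ holds, part~2 gives $\Diamond_2(p,q)$. Symmetrically, I would produce $\Diamond_1(p',q')$ from part~1 of Proposition~\ref{rhdimpliesdiamond}, applied to $p'\colon R\to X'$, $q'\colon S\to Y'$, connecting $\theta$ (``$\lambda$'' slot) to $\lambda'$ (``$\lambda'$'' slot): part~1 needs the source ed) and the target uv), i.e.\ $\theta$ ed) and $\lambda'$ uv), both assumed, so together with $\rhd(p',q')$ we get $\Diamond_1(p',q')$. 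Feeding $\Diamond_1(p',q')$ and $\Diamond_2(p,q)$ into Proposition~\ref{pre2rhdimpliesdiamante} then yields $\Diamond(R,S)$, finishing the proof.

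I do not anticipate a genuine obstacle: the argument is purely an orchestration of results already established in the section, the locale hypothesis on $G$ being used only inside those cited statements. The one point requiring care is the bookkeeping of which $\ell$-relation occupies the ``$\lambda$'' role and which the ``$\lambda'$'' role in each invocation of Proposition~\ref{rhdimpliesdiamond} (always: apex relation $=$ ``$\lambda$'', foot relation $=$ ``$\lambda'$''), and the corresponding check that the four axioms ed), su), uv), in) line up as claimed — which they do, as just verified. As an alternative, one could give a self-contained proof by the elevator calculus mirroring the computation in the proof of Proposition~\ref{pre2rhdimpliesdiamante}, inserting the two $\rhd$ inequalities together with the ed) and su) identities for $\theta$ where that proof used $\Diamond_1$ and $\Diamond_2$; but the reduction above seems the cleaner route.
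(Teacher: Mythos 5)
Your proof is correct and is essentially identical to the paper's own argument: both apply Proposition~\ref{rhdimpliesdiamond} twice with $\theta$ in the ``$\lambda$'' slot (part~1 with $f=p'$, $g=q'$ to get $\Diamond_1(p',q')$; part~2 with $f=p$, $g=q$ to get $\Diamond_2(p,q)$) and then conclude via Proposition~\ref{pre2rhdimpliesdiamante}. The bookkeeping of which axioms go where is exactly as you verified.
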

\begin{proof}
Use proposition \ref{rhdimpliesdiamond} twice: First with $f =  p'$, $g = q'$, $\lambda = \theta$, 
$\lambda' = \lambda'$ to have $\Diamond_1(p',\,q')$. Second with $f = p$, $g = q$, $\lambda = \theta$, 
$\lambda' = \lambda$ to have 
$\Diamond_2(p,\,q)$. Then use proposition \ref{pre2rhdimpliesdiamante}.
\end{proof}

\begin{remark} \label{productistheta}
Note that the diagrams $\rhd$ in \ref{2rhd} mean that \mbox{$\theta \leq \lambda \boxtimes \lambda' \circ (p, p') \times (q, q')$} (see \ref{productrelationdef}). In particular, when $G$ is a locale, there is always a $\ell$-relation $\theta$ in \ref{diagramadiamante12}, which may be taken to be the composition 
\mbox{$R \times S \mr{(p, p') \times (q, q')} X \times X' \times Y \times Y' 
\mr{\lambda \boxtimes \lambda'} G$}. However, it is important to consider an arbitrary $\ell$-relation $\theta$ (see propositions \ref{trianguloesdiamante} and \ref{supisloc}).
\end{remark}
\begin{proposition} \label{diamanteimplies2rhd}
We refer to \ref{diagramadiamante12}: Assume that $R$ and $S$ are relations, that $\lambda$, $\lambda '$ are $\ell$-bijections, and that  
$\rhd(p,\,q)$, $\rhd(p',\,q' )$ in \eqref{2rhd} hold. Take 
\mbox{$\theta=\lambda \boxtimes \lambda' \circ (p, p') \times (q, q')$}. Then,
if $\Diamond(R,\,S)$ holds, $\theta$ is a $\ell$-bijection.
\end{proposition}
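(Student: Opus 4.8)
The plan is to check the four axioms of \ref{bijection} for $\theta$ one at a time. I would begin by recording the explicit description of $\theta$: since $R \subset X \times X'$ and $S \subset Y \times Y'$ are relations, an element $r \in R$ \emph{is} the pair $(p(r),p'(r))$ and an element $v \in S$ \emph{is} the pair $(q(v),q'(v))$, and \ref{productrelationdef} gives
$$
\theta\langle r,v\rangle \;=\; \lambda\langle p(r),q(v)\rangle \,\wedge\, \lambda'\langle p'(r),q'(v)\rangle .
$$
Recall that $G$ is a locale throughout this part of the section, and that for this particular $\theta$ the hypotheses $\rhd(p,q)$, $\rhd(p',q')$ hold automatically (see \ref{productistheta}), so the real content lies in $\Diamond(R,S)$.

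For the axioms $uv)$ and $in)$ I would argue that they pass from $\lambda$, $\lambda'$ to $\theta$ essentially as in Proposition \ref{productrelation}. Indeed, if $v_1 \neq v_2$ in $S$, then, $S$ being a relation, $(q(v_1),q'(v_1)) \neq (q(v_2),q'(v_2))$, so these pairs differ in the first or in the second coordinate; $uv)$ for $\lambda$ in the first case, or $uv)$ for $\lambda'$ in the second, makes $\theta\langle r,v_1\rangle \wedge \theta\langle r,v_2\rangle$ equal to $0$. The argument for $in)$ is symmetric, now using that $R$ is a relation. This is precisely where the hypothesis that $R$ and $S$ be \emph{relations} (and not merely spans) enters.

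The two remaining axioms are where $\Diamond(R,S)$ is used. For $ed)$, I would fix $r \in R$ and write $a = p(r)$, $a' = p'(r)$, so that $(a,a') \in R$. Reindexing the supremum over $S$ by the second coordinate and distributing $\wedge$ over $\bigvee$ (legitimate since $G$ is a locale) yields
$$
\bigvee_{v \in S} \theta\langle r,v\rangle \;=\; \bigvee_{b' \in Y'} \Big( \lambda'\langle a',b'\rangle \,\wedge\, \bigvee_{(b,b') \in S} \lambda\langle a,b\rangle \Big).
$$
Now $\Diamond(R,S)$ rewrites the inner supremum as $\bigvee_{(a,x') \in R} \lambda'\langle x',b'\rangle$; meeting with $\lambda'\langle a',b'\rangle$ and using $in)$ for $\lambda'$ kills every summand with $x' \neq a'$, whereas the summand $x' = a'$ survives (it is present because $(a,a') \in R$) and equals $\lambda'\langle a',b'\rangle$. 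Hence $\bigvee_{v \in S} \theta\langle r,v\rangle = \bigvee_{b' \in Y'} \lambda'\langle a',b'\rangle = 1$ by $ed)$ for $\lambda'$. The axiom $su)$ is handled symmetrically: fixing $v \in S$ and writing $b = q(v)$, $b' = q'(v)$, one reindexes the supremum over $R$ by the first coordinate, applies $\Diamond(R,S)$ to replace $\bigvee_{(a,x') \in R} \lambda'\langle x',b'\rangle$ by $\bigvee_{(y,b') \in S} \lambda\langle a,y\rangle$, collapses the result with $uv)$ for $\lambda$ (the surviving summand being $y = b$, present since $(b,b') = v \in S$), and concludes with $su)$ for $\lambda$.

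I expect the only delicate point to be the collapsing step in $ed)$ and $su)$: one must feed $\Diamond(R,S)$ the correct variable and then observe that the membership $(a,a') \in R$ (respectively $(b,b') \in S$) is exactly what keeps the ``diagonal'' summand alive when $in)$ for $\lambda'$ (respectively $uv)$ for $\lambda$) annihilates all the others --- otherwise the whole supremum would collapse to $0$. Everything else is formal; if preferred, these two computations can also be rendered in the elevator calculus of the appendix, in the style of the proof of Proposition \ref{pre2rhdimpliesdiamante}.
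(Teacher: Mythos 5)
Your proof is correct and follows essentially the same route as the paper's: identify $\theta$ with the restriction of $\lambda\boxtimes\lambda'$ to $R\times S$, get $uv)$ and $in)$ from the corresponding axioms for $\lambda$, $\lambda'$, then fix an element of $R$ (resp.\ $S$), distribute $\wedge$ over $\bigvee$, apply $\Diamond(R,S)$, and collapse using membership of $(a,a')$ in $R$ (resp.\ $(b,b')$ in $S$). The only cosmetic difference is in the collapsing step for $ed)$: you use $in)$ for $\lambda'$ to obtain an exact equality, whereas the paper merely observes that the diagonal summand gives the lower bound $\geq\bigvee_{y'}\lambda'\langle a',y'\rangle=1$, which suffices since everything is $\leq 1$.
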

\begin{proof}
We can safely assume  $R \subset X \times X'$ and 
$S \subset Y \times Y'$, and \mbox{$\lambda \boxtimes \lambda' \circ (p, p') \times (q, q')$} to be the restriction of $\lambda \boxtimes \lambda'$ to $R \times S$.
From the $\rhd$ diagrams \eqref{2rhd}  we easily see that axioms $uv)$ and $in)$ for $\theta$ follow from the corresponding axioms for 
$\lambda$ and $\lambda'$. We prove now axiom $ed)$, axiom $su)$ follows in a symmetrical way. 
Let $(a,a') \in R$, we compute:

\vspace{1ex}

$
\displaystyle\bigvee_{(y,y') \in S} \theta \langle (a,a'), (y,y') \rangle 
\,=\, 
\bigvee_{y'} \bigvee_{(y,y') \in S} \lambda \langle a,y \rangle \wedge \lambda' \langle a',y'\rangle
\stackrel{\Diamond}{=}$ 

\hfill 
$
\stackrel{\Diamond}{=} 
\displaystyle\bigvee_{y'} \bigvee_{(a,x') \in R} \lambda' \langle x',y' \rangle \wedge \lambda' \langle a',y'\rangle 
\geq 
\bigvee_{y'} \lambda' \langle a',y'\rangle 
\stackrel{ed)}{=} 1
$
\end{proof}

We found convenient to combine \ref{2rhdimpliesdiamante} and \ref{diamanteimplies2rhd} into:
\begin{proposition} \label{combinacion}
Let $R \subset X \times X'$, $S \subset Y \times Y'$ be any two relations, and $X \times Y \mr{\lambda} G$, 
$X' \times Y' \mr{\lambda'} G$ be $\ell$-bijections.  Let $R \times S \mr{\theta} G$ be the restriction of $\lambda \boxtimes \lambda'$ to $R \times S$. Then, 
$\Diamond(R, S)$ holds if and only if $\theta$ is a $\ell$-bijection. \cqd
\end{proposition}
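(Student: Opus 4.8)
The plan is to obtain Proposition~\ref{combinacion} as a direct combination of the two preceding results, Proposition~\ref{2rhdimpliesdiamante} and Proposition~\ref{diamanteimplies2rhd}, after checking that the present hypotheses activate both of them. First I would observe that since $\lambda$ and $\lambda'$ are $\ell$-bijections and $\theta$ is defined as the restriction of $\lambda \boxtimes \lambda'$ to $R \times S$, Remark~\ref{productistheta} tells us that the two $\rhd$ diagrams $\rhd(p,q)$ and $\rhd(p',q')$ of \eqref{2rhd} hold automatically (the restriction map is pointwise $\leq$ the composite $\lambda \boxtimes \lambda' \circ (p,p')\times(q,q')$, in fact equal to it). So both of these $\rhd$ hypotheses, which each of the two propositions needs, are available for free. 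I would also note at the outset that, as remarked after \ref{diagramadiamante12}, $R$ and $S$ being relations (as opposed to general spans) is exactly the setting of \ref{diamanteimplies2rhd}, and using $p,p',q,q'$ as the legs of the spans puts us in the situation of \ref{diagramadiamante12}.

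For the ``only if'' direction, I would assume $\Diamond(R,S)$ holds and apply Proposition~\ref{diamanteimplies2rhd} verbatim: its hypotheses are that $R,S$ are relations, that $\lambda,\lambda'$ are $\ell$-bijections, that $\rhd(p,q)$ and $\rhd(p',q')$ hold, and that $\theta = \lambda \boxtimes \lambda' \circ (p,p')\times(q,q')$; all of these are in force, so $\theta$ is a $\ell$-bijection. For the ``if'' direction, I would assume $\theta$ is a $\ell$-bijection and apply Proposition~\ref{2rhdimpliesdiamante}: it requires $\lambda$ to satisfy $in)$, $\lambda'$ to satisfy $uv)$, the diagrams $\rhd(p,q)$, $\rhd(p',q')$ to hold, and $\theta$ to satisfy $ed)$ and $su)$. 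Since $\lambda$ and $\lambda'$ are $\ell$-bijections they satisfy all four axioms in \ref{bijection}, in particular $in)$ and $uv)$ respectively; the $\rhd$ diagrams hold by Remark~\ref{productistheta} as above; and $\theta$ being a $\ell$-bijection gives $ed)$ and $su)$. Hence $\Diamond(R,S)$ holds.

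There is essentially no obstacle here beyond bookkeeping: the proposition is explicitly flagged in the text as a convenient repackaging (``We found convenient to combine \ref{2rhdimpliesdiamante} and \ref{diamanteimplies2rhd} into''), so the only thing to be careful about is matching the variable names — the $f,g,\lambda,\lambda'$ slots of the general lemmas must be instantiated with $p,q$ and $p',q'$ in the right pairing — and confirming that ``restriction of $\lambda \boxtimes \lambda'$ to $R \times S$'' is literally the $\theta$ appearing in both of the cited propositions, which it is once $R \subset X \times X'$ and $S \subset Y \times Y'$ are used to identify the spans with their images (the harmless reduction already made inside the proof of \ref{diamanteimplies2rhd}). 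With those identifications in place the argument is a two-line citation each way, and the proof can be given as essentially the one-sentence chain: the $\rhd$ diagrams are automatic by \ref{productistheta}, so \ref{diamanteimplies2rhd} gives one implication and \ref{2rhdimpliesdiamante} gives the other.
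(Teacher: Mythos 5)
Your proposal is correct and is exactly the argument the paper intends: the proposition is stated with only the remark that it ``combines'' Propositions~\ref{2rhdimpliesdiamante} and~\ref{diamanteimplies2rhd}, and your verification that the $\rhd(p,q)$, $\rhd(p',q')$ diagrams hold automatically for the restriction of $\lambda\boxtimes\lambda'$ (via Remark~\ref{productistheta}) is precisely the bookkeeping needed to make that combination legitimate in both directions.
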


\section{On $\rhd$ and $\Diamond$ cones}

We consider a pointed topos $\cc{E}ns \mr{f} \cc{E}$, with inverse image $f^* = F$. 
\begin{sinnadastandard} \label{rel}
 Let $\cc{R}el(\cc{E})$ be the category of relations in $\cc{E}$. $\cc{R}el(\cc{E})$ is a symmetric monoidal category with tensor product given by the cartesian product in 
$\cc{E}$ (which is not cartesian in $\cc{R}el(\cc{E})$). Every object $X$ has a dual, and it is self dual, the unit and the counit of the duality are both given by the diagonal relation $\Delta \subset X \times X$ (see \ref{tensoriso}). There is a faithful functor \mbox{$\cc{E} \rightarrow \cc{R}el(\cc{E})$}, the identity on objects and the graph on arrows, we will often abuse notation and identify an arrow with its graph. The functor $\cc{E} \mr{F} \cc{E}ns$ has an extension 
\mbox{$\cc{R}el(\cc{E}) \mr{\cc{R}el(F)} \cc{R}el$,} if $R \subset X \times Y$ is a relation, then $FR \subset FX \times FY$, and 
$\cc{R}el(F)$ is in this way a tensor functor.
We have a commutative diagram:
$$
\xymatrix
        {
           \cc{E} \ar[r] \ar[d]_F
         & \cc{R}el(\cc{E}) \ar[d]^{T}
         \\
           \cc{E}ns \ar[r] 
         & \cc{R}el \; \ar@{^(->}[r]^{\ell} 
         & Sup 
         & {(\text{where} \;  T = \cc{R}el(F))} 
        }
$$
\end{sinnadastandard} 
\begin{sinnadaitalica} \label{FequivalenceiffRel(F)so}
It can be seen that          
$F$ is an equivalence if and only if $T$ is so.  
\cqd
\end{sinnadaitalica} 
Note that on objects $TX = FX$ and on arrows in $\cc{E}$, $T(f) = F(f)$. Since $F$ is the inverse image of a point, the diagram of $F$ is a cofiltered category, $T(X \times Y) = TX \times TY$, if $C_i \to X$ is an epimorphic family in 
$\cc{E}$, then $TC_i \to TX$ is a surjective family of sets. If $R$ is an arrow in $\cc{R}el(\cc{E})$, \mbox{$T(R^{op}) = (TR)^{op}$.} 

Let $H$ be a sup-lattice furnished with a $\ell$-relation $TX \times TX \mr{\lambda_{X}} H$ for each $X \in \cc{E}$. Each arrow $X \mr{f} Y$ in 
$\cc{E}$ and each arrow 
$X \mr{R} Y$ in $\cc{R}el(\cc{E})$ (i.e relation 
$R \mmr{} X \times Y$, $R \mr{\pi_1} X$, $R \mr{\pi_2} Y$ in $\cc{E}$), determine the following diagrams:
$$
\xymatrix@C=4ex@R=3ex
        {
         FX \times FX \ar[rd]^{\lambda_{X}}  
                      \ar[dd]_{F(f) \times F(f)}  
        \\
         {} \ar@{}[r]^(.3){\geq}
         &  H\,,    
        \\
         FY \times FY \ar[ru]^{\lambda_{Y}} 
         } 
\hspace{3ex}
\xymatrix@C=4ex@R=3ex
        {
         & TX \times TX \ar[rd]^{\lambda_{X}}  
        \\
           TX \times TY 
               \ar[rd]_{TR \times TY \hspace{2.5ex}} 
			   \ar[ru]^{TX \times TR^{op}\hspace{2.5ex}} 
	     & \equiv 
         & H\,. 
        \\
         & TY \times TY \ar[ru]^{\lambda_{Y}} 
         }
$$ 
We say that $TX \times TX \mr{\lambda_{X}} H$ is a \emph{$\rhd$-cone} if the $\, \rhd(F(f),\, F(f)) \,$ diagrams  hold, and that it is a 
\emph{$\Diamond$-cone} if the $\,\Diamond(TR,TR)\,$ diagrams hold. Similarly we talk of \emph{$\Diamond_1$-cones} and \emph{$\Diamond_2$-cones} if the 
$\Diamond_1(F(f),\, F(f))$ and $\Diamond_2(F(f),\, F(f))$ \mbox{diagrams} hold. We will abbreviate $\Diamond(R) = \Diamond(TR,TR)$, and similarly $\rhd(f)$, $\Diamond_1(f)$ and $\Diamond_2(f)$. 
If $H$ is a locale and the 
$\lambda_X$ are $\ell$-bijections, we say that we have a $\Diamond$-cone or a $\rhd$-cone \textit{of $\ell$-bijections}.

\begin{proposition} \label{dim1ydim2esdim}
A family $TX \times TX \mr{\lambda_{X}} H$ of $\ell$-relations is a $\Diamond$-cone if and only if it is both a $\Diamond_1$ and a $\Diamond_2$-cone.
\end{proposition}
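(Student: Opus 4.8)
The plan is to reduce everything to two facts already recorded right after \ref{diagramadiamante12}: that $\Diamond_1$ and $\Diamond_2$ are instances of $\Diamond$, and, conversely, that a general $\Diamond$ diagram follows from a suitable $\Diamond_1$ together with a suitable $\Diamond_2$ (this converse being exactly Proposition \ref{pre2rhdimpliesdiamante}). The only additional input needed is that $T=\cc{R}el(F)$ is a tensor functor which is the identity on objects, sends the graph of a morphism $f$ of $\cc{E}$ to the graph of $F(f)$, preserves $(-)^{op}$, and preserves composition of relations --- all of which is recorded in \ref{rel} and the lines immediately following it.

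I would first prove that a $\Diamond$-cone is both a $\Diamond_1$- and a $\Diamond_2$-cone. Fix a morphism $f\colon X\to Y$ of $\cc{E}$ and let $\Gamma_f\subseteq X\times Y$ be its graph, which is a relation of $\cc{E}$; then $T\Gamma_f$ is the graph of $F(f)$ and $T(\Gamma_f^{op})=(F(f))^{op}$. By the remark following \ref{diagramadiamante12}, the diagram $\Diamond_1(f)=\Diamond_1(F(f),F(f))$ is literally the diagram $\Diamond(T\Gamma_f,T\Gamma_f)=\Diamond(\Gamma_f)$, and $\Diamond_2(f)=\Diamond_2(F(f),F(f))$ is the diagram $\Diamond(\Gamma_f^{op})$; both hold because the cone, ranging over all relations of $\cc{E}$, includes $\Gamma_f$ and $\Gamma_f^{op}$. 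The only thing to check is that the vertex $\ell$-relations agree: in $\Diamond_1(f)$ and in $\Diamond(\Gamma_f)$ alike the source vertex carries $\lambda_X$ and the target vertex carries $\lambda_Y$, so the identification is immediate (and symmetrically for $\Diamond_2(f)$ versus $\Diamond(\Gamma_f^{op})$).

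For the converse, fix a relation $R\subseteq X\times Y$ of $\cc{E}$ with projections $R\mr{\pi_1}X$, $R\mr{\pi_2}Y$, so that $R=\pi_2\circ\pi_1^{op}$ in $\cc{R}el(\cc{E})$ and hence $TR=F(\pi_2)\circ F(\pi_1)^{op}$ in $\cc{R}el$. I would apply Proposition \ref{pre2rhdimpliesdiamante} taking both spans to be $TX\ml{F\pi_1}TR\mr{F\pi_2}TY$, with $\lambda=\lambda_X$, $\lambda'=\lambda_Y$, and with connecting $\ell$-relation $\theta=\lambda_R\colon TR\times TR\to H$ (available precisely because $R$ is an object of $\cc{E}$, on which the cone is defined). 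The two hypotheses of \ref{pre2rhdimpliesdiamante}, namely $\Diamond_1(F\pi_2,F\pi_2)$ (an equation between $\lambda_R$ and $\lambda_Y$) and $\Diamond_2(F\pi_1,F\pi_1)$ (an equation between $\lambda_R$ and $\lambda_X$), are then exactly the cone diagrams $\Diamond_1(\pi_2)$ and $\Diamond_2(\pi_1)$, which hold by the $\Diamond_1$- and $\Diamond_2$-cone hypotheses applied to the structure maps $\pi_2,\pi_1$ of $R$. The conclusion of \ref{pre2rhdimpliesdiamante} is $\Diamond(TR,TR)=\Diamond(R)$, and since $R$ was an arbitrary relation of $\cc{E}$ this exhibits $\{\lambda_X\}$ as a $\Diamond$-cone.

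The one point requiring care --- and the only real obstacle --- is the bookkeeping in the converse: in \ref{pre2rhdimpliesdiamante} the hypotheses $\Diamond_1(p',q')$ and $\Diamond_2(p,q)$ are assertions about the chosen $\theta$ together with $\lambda'$, respectively $\lambda$, so one must verify that with the choice $\theta=\lambda_R$ these instances coincide on the nose with the cone diagrams attached to $\pi_2$ and $\pi_1$ --- equivalently, that $\lambda_R$ is the $\ell$-relation that the $\Diamond$-cone formalism places at the vertex $TR$. Once this matching of vertices is made explicit, the statement is a direct appeal to Proposition \ref{pre2rhdimpliesdiamante} and to the remark following \ref{diagramadiamante12}.
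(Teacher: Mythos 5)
Your proof is correct and follows essentially the same route as the paper: the forward direction is the observation (already made right after \ref{diagramadiamante12}) that $\Diamond_1(f)$ and $\Diamond_2(f)$ are the instances $\Diamond(\Gamma_f)$ and $\Diamond(\Gamma_f^{op})$ of the cone condition, and the converse is exactly the paper's application of Proposition \ref{pre2rhdimpliesdiamante} with both spans equal to $TX\ml{F\pi_1}TR\mr{F\pi_2}TY$ and $\theta=\lambda_R$, so that $\Diamond_1(\pi_2)$ and $\Diamond_2(\pi_1)$ yield $\Diamond(R)$. Your explicit bookkeeping of which leg plays the role of $p,p',q,q'$ is in fact slightly more careful than the paper's own one-line proof.
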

\begin{proof}
Use proposition \ref{pre2rhdimpliesdiamante} with $R = TR$, $S = TR$, $p = p' = \pi_1$, $q = q' = \pi_2$, $\lambda = \lambda_X$,  
$\lambda' = \lambda_Y$, and $\theta = \lambda_R$. Then, $\Diamond_1(\pi_2)$ and $\Diamond_2(\pi_1)$ imply $\Diamond(R)$
\end{proof}
\begin{proposition} \label{trianguloesdiamante}
Any $\rhd$-cone $TX \times TX \mr{\lambda_{X}} H$ of $\ell$-bijections with values in a locale $H$ is a $\Diamond$-cone (of $\ell$-bijections).
\end{proposition}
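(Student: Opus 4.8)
The plan is to reduce everything to the single‑diagram implications already proved in Section~\ref{sec:general}, so that no new computation is needed. Since every $\lambda_X$ is assumed to be an $\ell$-bijection, it satisfies all four axioms $ed)$, $uv)$, $su)$, $in)$ of \ref{bijection}. The hypothesis that the family is a $\rhd$-cone gives, for each arrow $f\colon X\to Y$ in $\cc{E}$, the diagram $\rhd(f)=\rhd(F(f),F(f))$.

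First I would apply Proposition~\ref{rhdimpliesdiamond}(1) with $f=g=F(f)$, $\lambda=\lambda_X$, $\lambda'=\lambda_Y$ and $G=H$: since $\lambda_X$ is $ed)$ and $\lambda_Y$ is $uv)$, the diagram $\rhd(f)$ implies $\Diamond_1(f)$. As this holds for every $f$, the family is a $\Diamond_1$-cone. Symmetrically, using Proposition~\ref{rhdimpliesdiamond}(2) and the fact that $\lambda_X$ is $su)$ and $\lambda_Y$ is $in)$, $\rhd(f)$ implies $\Diamond_2(f)$ for every $f$, so the family is also a $\Diamond_2$-cone.

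Then I would invoke Proposition~\ref{dim1ydim2esdim}: a family that is simultaneously a $\Diamond_1$-cone and a $\Diamond_2$-cone is a $\Diamond$-cone. Since the $\lambda_X$ are $\ell$-bijections by hypothesis, this is a $\Diamond$-cone \emph{of $\ell$-bijections}, which is exactly the assertion.

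I do not expect a genuine obstacle here; the only point deserving attention is that the $\Diamond_1$- and $\Diamond_2$-cone conditions are required only for the arrows $f$ of $\cc{E}$ (graphs of functions), not for arbitrary relations $R$. It is precisely Proposition~\ref{dim1ydim2esdim} that bridges this gap — internally it applies \ref{pre2rhdimpliesdiamante} to the projections $\pi_1,\pi_2$ of the span defining $R$ — so the substance of the argument is already contained in the previously established propositions, and what remains is just to check that the hypotheses of \ref{rhdimpliesdiamond} (the relevant axioms on the source and target $\ell$-relations) are met, which they are because all the $\lambda_X$ are $\ell$-bijections.
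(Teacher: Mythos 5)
Your proof is correct and is essentially the paper's argument: the paper simply cites Proposition~\ref{2rhdimpliesdiamante} with $\lambda=\lambda_X$, $\lambda'=\lambda_Y$, $\theta=\lambda_R$, and that proposition's own proof is exactly your chain --- apply \ref{rhdimpliesdiamond} twice to get $\Diamond_1$ and $\Diamond_2$, then combine via \ref{pre2rhdimpliesdiamante} (of which \ref{dim1ydim2esdim} is the cone-level restatement). No gap; the two routes unfold to the same computation.
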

\begin{proof}
Given any relation $R \mmr{} X \times Y$, consider proposition \ref{2rhdimpliesdiamante} with \mbox{$\lambda = \lambda_X$,} $\lambda' = \lambda_Y$, and $\theta = \lambda_R$.
\end{proof}

\begin{definition} \label{comp}
Let $TX \times TX \mr{\lambda_{X}} H$ be a $\Diamond$-cone with values in a commutative algebra $H$ in $Sup$, with multiplication $*$ and unit $u$. 
We say that it is \emph{compatible} if the following equations hold:
$$
\lambda_X \langle a,\,a' \rangle *  \lambda_Y \langle b,\,b' \rangle = 
\lambda_{X \times Y}\langle (a,\,b),\,(a',\,b') \rangle \,,
\hspace{3ex} \lambda_1(*,\,*) = u. 
$$
\end{definition}

Any compatible $\Diamond$-cone wich covers $H$ forces $H$ to be a locale, and such a cone is necessarily a cone of $\ell$-bijections (and vice versa). We examine this now:

Given a compatible cone, consider the diagonal $X \mr{\Delta} X \times X$, the arrow $X \mr{\pi} 1$, and the following $\Diamond_1$ diagrams:
$$
\xymatrix@R=4ex@C=2ex
        { 
         & TX \!\times\! TX \ar[rd]^{\lambda_X} 
         & & & TX \!\times\! TX \ar[rd]^{\lambda_X} 
        \\
	     TX \!\times\! TX \!\times\! TX 
	                      \ar[ru]^{TX \!\times\! \Delta^{op}} 
	                      \ar[rd]_{\Delta \!\times\! TX \!\times\! TX \ \ } 
	     & \equiv 
	     & \hspace{0ex} H, 
	     & TX \!\times\! 1 \ar[ru]^{TX \!\times\! \pi^{op}} 
	                  \ar[rd]_{\pi \!\times\! 1} 
	     & \!\! \equiv 
	     & \hspace{0ex} H.
	    \\
	     & TX \!\times\! TX \!\times\! TX \!\times\! TX 
	                          \ar[ru]_{\lambda_{X \!\times\! X}} 
	     & & & 1 \!\times\! 1 \ar[ru]_{\lambda_1} 
	    }
$$
Let $a,\, b_1,\, b_2 \in TX$, and let $b$ stand for either $b_1$ or $b_2$. Chasing $(a,\,b_1,\,b_2)$ in the first diagram and $(a,*)$ in the second it follows: 

\vspace{1ex}

$
(1) \;\;
\lambda_X\langle a,\,b_1\rangle * \lambda_X\langle a,\,b_2\rangle \,=
\lambda_{X \times X}\langle (a,\,a),\,(b_1,\,b_2) \rangle \, = \;
\delta_{b_1 = b_2} \, \lambda_X\langle a,\,b \rangle. 
$

\vspace{1ex} 

$
 (2) \;\; 
 \lambda_X(a,\,b) \;\leq\; 
 \bigvee_x \lambda_X\langle a,\,x\rangle \;=\; 
 \lambda_1(*,*) = u.
$
\begin{proposition} \label{compislocale}
Let $H$ be a commutative algebra, and $TX \times TX \mr{\lambda_{X}} H$ be a compatible $\Diamond$-cone such that the elements of the form $\lambda_X(a,\,a')$, $a,\, a' \in TX$ are algebra generators. Then   
$H$ is a locale. 
\end{proposition}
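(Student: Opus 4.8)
The plan is to invoke the characterization of locales recalled in \ref{charlocales}: a commutative algebra $H$ in $Sup$ is a locale if and only if $u = 1$ and $x * x = x$ for every $x \in H$. So it suffices to establish these two identities, and I would do so by first checking them on the algebra generators $\lambda_X\langle a,a'\rangle$ and then propagating to all of $H$ via the bilinearity of the multiplication $*$ (which makes $*$ monotone in each variable and distribute over arbitrary joins).

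First I would read off from the two equations $(1)$ and $(2)$ displayed just before the statement what they give on generators: setting $b_1 = b_2 = b$ in $(1)$ yields idempotency of every generator, $\lambda_X\langle a,b\rangle * \lambda_X\langle a,b\rangle = \lambda_X\langle a,b\rangle$, while $(2)$ gives $\lambda_X\langle a,b\rangle \le u$. Next, using the compatibility equations of \ref{comp}, a $*$-product of two generators $\lambda_X\langle a,a'\rangle * \lambda_Y\langle b,b'\rangle = \lambda_{X\times Y}\langle(a,b),(a',b')\rangle$ is again a generator; iterating, every finite $*$-product of generators is a generator (the empty product being $u = \lambda_1\langle *,*\rangle$, since $T1 = F1 = 1$). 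Therefore the hypothesis that the $\lambda_X\langle a,a'\rangle$ are algebra generators amounts precisely to: every element of $H$ is a join of generators.

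With this in hand, $u = 1$ is immediate: $1 \in H$ is a join of generators, each of which is $\le u$ by $(2)$, so $1 \le u$ and hence $u = 1$. For the idempotency of an arbitrary $x = \bigvee_i g_i$ (with each $g_i$ a generator) I would expand $x * x = \bigvee_{i,j} g_i * g_j$ by bilinearity; on the one hand $g_i * g_j \le g_i * u = g_i \le x$ using monotonicity and $u = 1$, so $x * x \le x$; on the other hand $x * x \ge \bigvee_i g_i * g_i = \bigvee_i g_i = x$ by the idempotency of the generators. Hence $x * x = x$, and by \ref{charlocales} the algebra $H$ is a locale.

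There is no real obstacle here: the only step needing a moment's attention is the reduction ``every element of $H$ is a join of generators'', which relies on the product of two generators being a generator (compatibility) together with $u$ itself being the generator $\lambda_1\langle *,*\rangle$; everything else is the routine observation that an idempotent commutative monoid in $Sup$ with $u = 1$ is exactly a locale.
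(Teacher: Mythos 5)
Your proof is correct and follows the same route as the paper's: verify $x*x=x$ and $x\le u$ on the generators via equations $(1)$ and $(2)$, then invoke the characterization of locales in \ref{charlocales}. The paper simply asserts ``it is enough to prove it for $w=\lambda_X(a,b)$''; your propagation step (products of generators are again generators by compatibility, so every element is a join of generators, and joins of idempotents below $u$ are idempotent and below $u$) is exactly the omitted justification.
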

\begin{proof}
We have to prove that for all $w \in H$, $w * w = w$ and $w \leq u$, see \ref{charlocales}. It is enough to prove it for $w = \lambda_X(a,\,b)$, which are precisely equations (1) and (2) above.
\end{proof}

\begin{proposition}\label{Diamondisbijection}  
A $\Diamond$-cone $TX \times TX \mr{\lambda_{X}} H$ with values in a locale $H$ is compatible if and only if it is a $\Diamond$-cone of $\ell$-bijections.
\end{proposition}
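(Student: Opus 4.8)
I would prove the two implications separately, in each case feeding the defining equations through the $\Diamond$-diagrams attached to three very concrete maps of $\cc{E}$: the diagonal $X \mr{\Delta} X \times X$, the terminal map $X \mr{\pi} 1$, and the two product projections $X \times Y \mr{\pi_1} X$, $X \times Y \mr{\pi_2} Y$. Throughout I use that a $\Diamond$-cone is both a $\Diamond_1$-cone and a $\Diamond_2$-cone (Proposition \ref{dim1ydim2esdim}), that $F$ is left exact so $T1 = F1 = 1$ (hence $\lambda_1$ is just the element $\lambda_1(*,*) \in H$), and that in a locale $* = \wedge$ and $u = 1$.

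\emph{Compatible $\Rightarrow$ $\Diamond$-cone of $\ell$-bijections.} Here the four axioms come out of the two equations already derived just before Proposition \ref{compislocale}. Equation $(1)$, $\lambda_X\langle a, b_1 \rangle \wedge \lambda_X\langle a, b_2\rangle = \delta_{b_1 = b_2}\, \lambda_X\langle a, b\rangle$, is axiom $uv)$ (read off the case $b_1 \ne b_2$); equation $(2)$, which contains the chain $\bigvee_x \lambda_X\langle a, x\rangle = \lambda_1(*,*) = u = 1$, is axiom $ed)$. Running these two computations again but with $\Diamond_2(\Delta)$ and $\Diamond_2(\pi)$ in place of $\Diamond_1(\Delta)$ and $\Diamond_1(\pi)$ — legitimate since we have a $\Diamond_2$-cone — gives $\lambda_X\langle a_1, b\rangle \wedge \lambda_X\langle a_2, b\rangle = \lambda_{X\times X}\langle (a_1,a_2),(b,b)\rangle = \delta_{a_1=a_2}\,\lambda_X\langle a,b\rangle$ and $\bigvee_x \lambda_X\langle x, b\rangle = \lambda_1(*,*) = 1$, i.e.\ axioms $in)$ and $su)$. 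So every $\lambda_X$ is an $\ell$-bijection.

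\emph{$\Diamond$-cone of $\ell$-bijections $\Rightarrow$ compatible.} The identity $\lambda_1(*,*) = u$ is just axiom $ed)$ for $\lambda_1$. For the multiplicativity identity, fix $(a,b),(a',b') \in T(X\times Y) = TX \times TY$. Applying $\Diamond_1$ to $\pi_1$ and to $\pi_2$ gives $\lambda_X\langle a,a'\rangle = \bigvee_{y'\in TY}\lambda_{X\times Y}\langle (a,b),(a',y')\rangle$ and $\lambda_Y\langle b,b'\rangle = \bigvee_{x'\in TX}\lambda_{X\times Y}\langle (a,b),(x',b')\rangle$. Taking the meet of these two and distributing $\wedge$ over $\bigvee$ (this is where $H$ being a locale enters) turns the left-hand side into $\bigvee_{x',y'}\lambda_{X\times Y}\langle (a,b),(a',y')\rangle \wedge \lambda_{X\times Y}\langle (a,b),(x',b')\rangle$; by axiom $uv)$ for $\lambda_{X\times Y}$ at the point $(a,b)$, every summand with $(x',y') \ne (a',b')$ is $0$, leaving exactly $\lambda_{X\times Y}\langle (a,b),(a',b')\rangle$. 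Hence $\lambda_X\langle a,a'\rangle * \lambda_Y\langle b,b'\rangle = \lambda_{X\times Y}\langle (a,b),(a',b')\rangle$, which is compatibility.

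Most of this is bookkeeping: deciding which of $\Delta, \pi, \pi_1, \pi_2$ and which of $\Diamond_1, \Diamond_2$ yields each axiom and each half of Definition \ref{comp}. The one genuinely substantive step is the last display in the converse direction, where multiplicativity of the cone is \emph{forced} by combining the two $\Diamond_1$-cone identities for the product projections with univalence $uv)$ and the infinite distributivity of $H$; this is precisely the place where the hypotheses ``$\Diamond$-cone'' (rather than just a $\rhd$-cone) and ``$H$ a locale'' are both needed. I expect no real obstacle beyond keeping the indices straight.
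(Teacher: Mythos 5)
Your proof is correct and follows essentially the same route as the paper's: the forward direction reads axioms $uv)$ and $ed)$ off equations (1) and (2) obtained from the $\Diamond_1$ diagrams for $\Delta$ and $\pi$ (with the symmetric $\Diamond_2$ versions giving $in)$ and $su)$), and the converse derives multiplicativity by meeting the $\Diamond_1(\pi_1)$ and $\Diamond_1(\pi_2)$ identities, distributing, and killing the off-diagonal terms with $uv)$ for $\lambda_{X\times Y}$. No gaps.
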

\begin{proof}
($\Rightarrow$): Clearly equations (1) and (2) above are the axioms uv) and ed) for $\lambda_X$. Axioms in) and su) follow by the symmetric argument using the corresponding $\Diamond_2$ diagrams.

($\Leftarrow$) $u = 1$ in $H$, so the second equation in definition \ref{comp} is just axiom $ed)$ (or $su)$) for $\lambda_1$. To prove the first equation we do as follows: 

Consider the projections $X \times Y \mr{\pi_1} X$, $X \times Y \mr{\pi_2} Y$. The $\Diamond_1(\pi_1)$  and $\Diamond_1(\pi_2)$ diagrams express the equations:

\vspace{1ex}

$
 \lambda_X\langle a,a'\rangle = \bigvee_y \lambda_{X \times Y} \langle (a,b),(a',y) \rangle, \;\;  \lambda_Y \langle b,b'\rangle = \bigvee_x \lambda_{X \times Y} \langle (a,b),(x,b') \rangle.
$

\vspace{1ex}

Taking the infimum of these two equations:

\vspace{1ex}

\noindent
$
\lambda_X \langle a,a' \rangle \wedge \lambda_Y \langle b,b' \rangle  \;=\;\bigvee_{x,y} \lambda_{X \times Y} \langle (a,b),(a',y) \rangle  \wedge \lambda_{X \times Y} \langle (a,b),(x,b') \rangle \stackrel{(*)}{\;=\;} 
$

\vspace{1ex}

\noindent
$
\stackrel{(*)}{\;=\;} \lambda_{X \times Y}\langle(a,b),(a',b')\rangle$, as desired 
($\stackrel{(*)}{\;=\;}$ justified by $uv)$ for $\lambda_{X \times Y}$). 
\end{proof}

\begin{proposition} \label{supisloc}
Let $TX \times TX \mr{\lambda_{X}} H$ be a $\Diamond$-cone of $\ell$-bijections such that the elements of the form $\lambda_X(a,\,a')$, $a,\, a' \in TX$ are locale generators. Then, any linear map $H \mr{\sigma} G$ into another 
$\Diamond$-cone of $\ell$-bijections, 
\mbox{$TX \times TX \mr{\lambda_{X}} G$,} satisfying $ \sigma \lambda_X = \lambda_X$, preserves infimum and $1$, thus it is a locale morphism.
\end{proposition}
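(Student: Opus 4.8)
The plan is to reduce the statement to the compatibility identities of Definition~\ref{comp} and then carry out a short computation that is symmetric in the two locales. By Proposition~\ref{Diamondisbijection} both $\Diamond$-cones of $\ell$-bijections in play --- the $H$-valued one and the $G$-valued one --- are \emph{compatible}. In a locale the algebra multiplication $*$ is $\wedge$ and the unit $u$ is $1$, and $T$ preserves the terminal object (so $T1$ is a singleton $\{*\}$ and $\lambda_1\langle *,*\rangle$ is a single element of the target, as already used in Definition~\ref{comp}), so compatibility says exactly that for all $X,Y$ and all $a,a'\in TX$, $b,b'\in TY$,
$$
\lambda_X\langle a,a'\rangle \wedge \lambda_Y\langle b,b'\rangle \,=\, \lambda_{X\times Y}\langle (a,b),(a',b')\rangle,
\qquad
\lambda_1\langle *,*\rangle \,=\, 1,
$$
both for the $H$-valued cone and, identically, for the $G$-valued cone.

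Next I would observe that the set $\cc{G}=\{\lambda_X\langle a,a'\rangle : X\in\cc{E},\ a,a'\in TX\}\subseteq H$ is closed under finite infima: iterating the first identity gives $\bigwedge_{k=1}^n \lambda_{X_k}\langle a_k,a'_k\rangle = \lambda_{X_1\times\cdots\times X_n}\langle (a_k)_k,(a'_k)_k\rangle$, and the empty infimum $1=\lambda_1\langle *,*\rangle$ also lies in $\cc{G}$. By the locale distributive law, the closure of a $\wedge$-closed set containing $1$ under arbitrary suprema is already a sub-locale; since $\cc{G}$ generates $H$ as a locale by hypothesis, that sub-locale is all of $H$. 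Thus every $w\in H$ has the form $w=\bigvee_{i\in I}\lambda_{X_i}\langle a_i,a'_i\rangle$.

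Then comes the computation. Preservation of $1$ is immediate: $\sigma(1_H)=\sigma(\lambda_1\langle *,*\rangle)=\lambda_1\langle *,*\rangle=1_G$, using $\sigma\lambda_1=\lambda_1$ and compatibility of both cones. For $\wedge$, write $w=\bigvee_i\lambda_{X_i}\langle a_i,a'_i\rangle$ and $w'=\bigvee_j\lambda_{Y_j}\langle b_j,b'_j\rangle$ in $H$; distributivity in $H$ together with the first compatibility identity give $w\wedge w'=\bigvee_{i,j}\lambda_{X_i\times Y_j}\langle (a_i,b_j),(a'_i,b'_j)\rangle$. Applying $\sigma$ --- which preserves suprema, being linear --- together with $\sigma\lambda_X=\lambda_X$ moves this expression into $G$, and running the same two steps in reverse inside $G$ (compatibility of the $G$-valued cone, then distributivity in $G$) rewrites it as $\bigl(\bigvee_i\lambda_{X_i}\langle a_i,a'_i\rangle\bigr)\wedge\bigl(\bigvee_j\lambda_{Y_j}\langle b_j,b'_j\rangle\bigr)=\sigma(w)\wedge\sigma(w')$. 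Hence $\sigma$ preserves $\wedge$ and $1$, so it is a locale morphism.

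I do not foresee a serious obstacle. The only step that is not purely formal is the passage, in the second paragraph, from ``$\cc{G}$ generates $H$ as a locale'' to ``every element of $H$ is a supremum of elements of $\cc{G}$'', which rests on the elementary remark that the sup-closure of a $\wedge$-closed subset containing the top element is itself a sub-locale. After that, the calculation goes through precisely because both cones obey the same compatibility identities, so it can be run forwards in $H$ and backwards in $G$.
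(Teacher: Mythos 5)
Your proof is correct, but it takes a genuinely different route from the paper's. The paper never invokes compatibility: it reduces to preserving binary infima of generators and then, choosing a common object $(Z,c)$ mapping to $(X,a)$ and $(Y,b)$ in the (cofiltered) diagram of $F$, applies Proposition \ref{infessup} to write $\lambda_X\langle a,a'\rangle\wedge\lambda_Y\langle b,b'\rangle=\bigvee_{T(f)(z)=a',\,T(g)(z)=b'}\lambda_Z\langle c,z\rangle$ as a supremum of generators; since this identity holds in both locales and $\sigma$ preserves suprema and the cone maps, the meet is preserved. You instead route through Proposition \ref{Diamondisbijection} to get the compatibility identity $\lambda_X\langle a,a'\rangle\wedge\lambda_Y\langle b,b'\rangle=\lambda_{X\times Y}\langle(a,b),(a',b')\rangle$, which exhibits the meet of two generators as a \emph{single} generator (for the product object), and you make explicit the reduction step the paper leaves implicit, namely that the sup-closure of a $\wedge$-closed generating set containing $1$ is everything. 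Your version is arguably cleaner --- the generators are literally closed under finite meets, so the passage from binary meets of generators to arbitrary meets is immediate --- at the cost of leaning on binary products in $\cc{E}$ and on Proposition \ref{Diamondisbijection}; the paper's version leans instead on the cofilteredness of the diagram of $F$ (the existence of the common cover $(Z,c)$), an ingredient your argument avoids entirely. Both are legitimate here since the cone is indexed by all of $\cc{E}$.
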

\begin{proof}
By axiom $ed)$ for $\lambda_1$, in both locales 
$\lambda_1(*,\,*) = 1$. Since $\sigma \lambda_1 = \lambda_1$, this shows that $\sigma$ preserves $1$. 

To show that infima are preserved it is enough to prove that infima of the form $\lambda_X \langle a, a' \rangle  \wedge 
\lambda_Y \langle b, b' \rangle$, $a,\,a' \in TX,\; b,\, b' \in TY$ are preserved. 
Take  
$
\vcenter
       {
        \xymatrix@R=0ex@C=3ex
                {
                 & (X,\, a) 
                \\ (Z,\, c) \ar[ru]^{f} \ar[rd]_{g} 
                \\ 
                 & (Y,\, b)
                 } 
       }
$ in the diagram of $F$. 
Then, by \mbox{proposition} \ref{infessup} with \mbox{$\lambda = \lambda_X$,} $\lambda' = \lambda_Y$, and $\theta = \lambda_Z$, it follows that the equation  
\mbox{$\lambda_X \langle a, a' \rangle  \wedge 
\lambda_Y \langle b, b' \rangle  = 
\displaystyle\bigvee_{T(f)(z) = a' \,,\, T(g)(z) = b'} \lambda_Z \langle  c, z \rangle$} holds in both locales. The proof finishes using that  $\sigma$ preserves suprema and $\sigma\lambda_Z=\lambda_Z$.
\end{proof}

Consider now a (small) site of definition $\cc{C} \subset \cc{E}$ of the topos $\cc{E}$. Suitable cones defined over $\cc{C}$ can be extended to $\cc{E}$. More precisely:
\begin{proposition} $ $ \label{extension}

1) Let $TC \times TC \mr{\lambda_C} H$ be a $\Diamond_1$-cone (resp. 
a $\Diamond_2$-cone) over $\cc{C}$. Then, $H$ can be (uniquely) furnished with $\ell$-relations $\lambda_X$ for all objects $X \in \cc{E}$ in such a way to determine a $\Diamond_1$-cone (resp. a $\Diamond_2$-cone) over $\cc{E}$. 
 
2) If $H$ is a locale and all the $\lambda_C$ are $\ell$-bijections, so are all the $\lambda_X$.
\end{proposition}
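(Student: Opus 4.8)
\emph{Strategy.} The plan is to define each $\lambda_X$, for $X\in\cc{E}$, by ``pulling back'' the given data along a covering of $X$ by objects of $\cc{C}$, and then to verify everything object by object, using that $\cc{C}$ generates $\cc{E}$ (so every $X$ admits an epimorphic family $\{C_i\to X\}$ with $C_i\in\cc{C}$, which $F$ sends to a jointly surjective family) and that the diagram of $F$ is cofiltered. Concretely, for $u,v\in FX$ I would set
\[
\lambda_X\langle u,v\rangle \;:=\; \bigvee_{F(h)(y)=v}\lambda_C\langle a,y\rangle ,
\]
where $C\in\cc{C}$, $h\colon C\to X$ is \emph{any} arrow of $\cc{E}$, and $a\in FC$ satisfies $F(h)(a)=u$; at least one such triple exists by the surjectivity remark. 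The first thing to prove is independence of the triple. Given two triples $(C,h,a)$ and $(C',h',a')$, cofilteredness of the diagram of $F$ — together with the fact (again from $\cc{C}$ being a site of definition) that every object of that diagram receives a map from one lying over $\cc{C}$ — yields a common refinement $C''\in\cc{C}$ with arrows $g\colon C''\to C$, $g'\colon C''\to C'$ such that $F(g)(a'')=a$, $F(g')(a'')=a'$ and $hg=h'g'$. Applying the $\Diamond_1$ equation to $g$ and to $g'$ rewrites the two candidate suprema as $\bigvee_{F(hg)(w)=v}\lambda_{C''}\langle a'',w\rangle$ and $\bigvee_{F(h'g')(w)=v}\lambda_{C''}\langle a'',w\rangle$, which agree since $hg=h'g'$. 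Taking $h=\mathrm{id}_C$ shows the new $\lambda_C$ is the old one.

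\emph{That it is a $\Diamond_1$-cone, and uniqueness.} For $f\colon X\to Y$ and a triple $(C,h,a)$ for $u\in FX$, the composite $fh$ is a valid triple for $F(f)(u)$, so the defining formula gives $\lambda_Y\langle F(f)(u),v'\rangle=\bigvee_{F(fh)(y)=v'}\lambda_C\langle a,y\rangle$, which is visibly equal to $\bigvee_{F(f)(v)=v'}\lambda_X\langle u,v\rangle$; this is exactly $\Diamond_1(f)$. Uniqueness is immediate: in any $\Diamond_1$-cone over $\cc{E}$ extending the given one, $\Diamond_1$ applied to a covering arrow $C\to X$ forces precisely the displayed value. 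The $\Diamond_2$ statement is proved in the mirror way, using instead $\lambda_X\langle u,v\rangle:=\bigvee_{F(h)(x)=u}\lambda_C\langle x,b\rangle$ with $F(h)(b)=v$.

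\emph{Part 2.} Suppose now $H$ is a locale and each $\lambda_C$ is an $\ell$-bijection. Axioms $ed)$ and $uv)$ for the extended $\lambda_X$ drop out of the formula: $\bigvee_v\lambda_X\langle u,v\rangle=\bigvee_{y\in FC}\lambda_C\langle a,y\rangle=1$, and for $v_1\neq v_2$ locale distributivity reduces $\lambda_X\langle u,v_1\rangle\wedge\lambda_X\langle u,v_2\rangle$ to a join of terms $\lambda_C\langle a,y_1\rangle\wedge\lambda_C\langle a,y_2\rangle$ with $y_1\neq y_2$, each zero by $uv)$ for $\lambda_C$. The real work is $su)$ and $in)$. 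Here I would first observe that the cone over $\cc{C}$ is also a $\Diamond_2$-cone over $\cc{C}$: by Proposition~\ref{diamondimpliesrhd} it is a $\rhd$-cone, and since the $\lambda_C$ are $\ell$-opfunctions, Proposition~\ref{rhdimpliesdiamond} turns each $\rhd(f)$ into $\Diamond_2(f)$. The key lemma is then the following partial extension of $\Diamond_2$ to arrows $h\colon C\to X$ with $C\in\cc{C}$ but $X\in\cc{E}$: for $b\in FC$, $v=F(h)(b)$ and every $u\in FX$,
\[
\lambda_X\langle u,v\rangle \;=\; \bigvee_{F(h)(x)=u}\lambda_C\langle x,b\rangle .
\]
Inequality $\geq$ is just $\rhd(F(h))$, valid because the extension is a $\Diamond_1$- hence (Proposition~\ref{diamondimpliesrhd}) a $\rhd$-cone over $\cc{E}$. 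For $\leq$, take a triple $(C',h',a)$ for $u$; each $y$ with $F(h')(y)=v$ gives $(y,b)\in F(C'\times_X C)$, so a covering of $C'\times_X C$ by $\cc{C}$ provides $D\in\cc{C}$ with $\rho'\colon D\to C'$, $\rho\colon D\to C$, $h'\rho'=h\rho$, and $d\in FD$ mapping to $y$ and to $b$. Then $\Diamond_2(\rho')$ gives $\lambda_{C'}\langle a,y\rangle=\bigvee_{F(\rho')(w)=a}\lambda_D\langle w,d\rangle$, and $\rhd(\rho)$ bounds each summand by $\lambda_C\langle F(\rho)(w),b\rangle$, where $F(h)(F(\rho)(w))=F(h')(a)=u$; summing over $y$ gives $\leq$. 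Granting the lemma: for $v\in FX$ pick a covering arrow $h\colon C\to X$ with $v=F(h)(b)$; the lemma writes every $\lambda_X\langle u,v\rangle$ through $\lambda_C\langle\,\cdot\,,b\rangle$, and then $su)$ and $in)$ for $\lambda_X$ follow from $su)$ and $in)$ for $\lambda_C$ (for $in)$, $u_1\neq u_2$ forces $x_1\neq x_2$ under $F(h)$, and one uses locale distributivity). The version with a $\Diamond_2$-cone as hypothesis is symmetric.

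\emph{Where the difficulty sits.} The construction and the $\Diamond_1$-cone check are bookkeeping around cofilteredness. The one genuinely nontrivial point is axiom $in)$ in part~2 — equivalently, that the $\Diamond_1$-extension of a cone of $\ell$-bijections is \emph{also} a $\Diamond_2$-cone. This is invisible in the defining formula, which is ``$\Diamond_1$-shaped'' (a supremum over fibres of $F(h)$ in the second variable), and getting at it is exactly what forces one to use the full strength of the $\ell$-bijection hypothesis (so that the cone over $\cc{C}$ is a $\Diamond_2$-cone too) and to cover a fibre product $C'\times_X C$ by objects of $\cc{C}$, combining this with Propositions~\ref{diamondimpliesrhd} and~\ref{rhdimpliesdiamond}.
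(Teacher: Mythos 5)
Your part 1 coincides with the paper's proof: the same defining formula $\lambda_X\langle u,v\rangle=\bigvee_{F(h)(y)=v}\lambda_C\langle a,y\rangle$, the same cofilteredness argument for independence of the chosen lift (the paper phrases it via the diagram of $F$ rather than a literal fibre product, but the content is identical), and the same verification of $\Diamond_1(f)$ for arrows of $\cc{E}$, with uniqueness forced by the formula. For part 2 the paper gives no argument (``straightforward, left to the reader''), so here you genuinely add something: your ``key lemma'' --- that for a covering arrow $C\mr{h}X$ with $F(h)(b)=v$ one also has $\lambda_X\langle u,v\rangle=\bigvee_{F(h)(x)=u}\lambda_C\langle x,b\rangle$ --- is in substance the coincidence of the two extension formulas (1) and (2) that the paper records at the end of its part-1 proof for cones that are simultaneously $\Diamond_1$ and $\Diamond_2$ over $\cc{C}$; once that is available, $su)$ and $in)$ drop out of formula (2) exactly as $ed)$ and $uv)$ drop out of formula (1). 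Your derivation of the lemma (covering $C'\times_X C$ by the site, then combining $\Diamond_2(\rho')$ with $\rhd(\rho)$, both legitimate since an $\ell$-bijection cone is a $\rhd$-cone and hence, by Proposition~\ref{rhdimpliesdiamond}, a $\Diamond_2$-cone over $\cc{C}$) is correct, so the net effect is that you have filled in, carefully, the details the paper omits rather than taken a different route.
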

\begin{proof} 

1)  
Let $X \in \cc{E}$ and $(a,\,b) \in TX \times TX$.  Take $C \mr{f} X$ and $c \in TC$ such that $a = T(f)(c)$. If $\lambda_X$ were defined so that the 
$\Diamond_1(f)$ diagram commutes, the equation 
$$
(1) \hspace{2ex} \lambda_X\langle a,\,b\rangle  = \bigvee_{T(f)(y)=b} \lambda_C\langle c,\,y\rangle
$$ 
should hold (see \ref{triangleequation}). We define $\lambda_X$ by this equation. This definition is independent of the choice of $c, \, C,$ and $f$. In fact, let $D \mr{g} X$ and $d \in TD$ such that 
$a = T(g)(d)$, and take 
$(e,\, E)$ in the diagram of $F$, $E \mr{h} C$, $E \mr{\ell} D$ such that 
$T(h)(e) = c$, $T(\ell)(e) = d$ and $T(fh)=T(g\ell)$. Then we compute
$$
\bigvee_{T(f)(y)=b} \lambda_C\langle c,\,y\rangle \stackrel{\Diamond_1(h)}{=}
\bigvee_{T(f)(y)=b} \;\;\bigvee_{T(h)(w)=y} \lambda_E\langle e,\,w\rangle \;= 
\bigvee_{T(fh)(w)=b} \lambda_E\langle e,\,w\rangle.
$$
From this and the corresponding computation with $d, \, D,$ and $\ell$ it follows:
$$
\bigvee_{T(f)(y)=b} \lambda_C\langle c,\,y\rangle = \bigvee_{T(g)(y)=b} \lambda_{D}\langle d,\,y\rangle.
$$
Given $X \mr{g} Y$ in $\cc{E}$, we check that the  
$\Diamond_1(g)$ diagram commutes: Let $(a,\,b) \in TX \times TY$, take $C \mr{f} X$ such that $a = T(f)(c)$, and let \mbox{$d = T(g)(a) = T(gf)(c)$.} Then:
$$
\lambda_Y \langle d,b \rangle = 
\hspace{-1ex} \bigvee_{T(gf)(z)=b} \lambda_C \langle c,z \rangle = 
\hspace{-1ex} \bigvee_{T(g)(x)=b} \bigvee_{T(f)(z)=x} \lambda_C \langle c,z \rangle = 
\hspace{-1ex} \bigvee_{T(g)(x)=b} \lambda_X \langle a,b \rangle.
$$
Clearly a symmetric argument can be used if we assume at the start that the $\Diamond_2$ diagram commutes. In this case, $\lambda_X$ would be defined by:                
$$
(2) \hspace{2ex}  \lambda_X\langle a,\,b\rangle  = \bigvee_{T(f)(y)=a} \lambda_C\langle y,\,c\rangle
$$ 
with $T(f)(c) = b$. 

If the $TC \times TC \mr{\lambda_C} H$ form a $\Diamond_1$ and a 
$\Diamond_2$ cone, definitions (1) and (2) coincide. In fact, since they are both independent of the chosen $c$, it follows they are equal to 
$$
\bigvee_{T(f)(y)=b, \;T(f)(c)=a} \lambda_C\langle c,\,y\rangle 
\hspace{2ex} = \hspace{2ex} 
\bigvee_{T(f)(y)=a, \;T(f)(c)=b} \lambda_C\langle y,\,c\rangle 
$$ 

\vspace{1ex}

2) It is straightforward and we leave it to the reader.
\end{proof}

It is worthwhile to consider the case of a locally connected topos. In this case it clearly follows from the above (abusing notation) that given $a, \; b \in TX$, if $a, \, b$ are in the same connected component 
$C \subset X$,  $a, \; b \in TC$, then 
$\lambda_X(a,\,b) = \lambda_C(a,\,b)$, and if they are in different connected components, then $\lambda_X(a,\,b) = 0$. When the topos is atomic and $H = Aut(F)$ (see \ref{Aut(F)}), the reverse implication holds, namely, if $\lambda_X(a,\,b) = 0$, then $a, \; b$ must be in different connected components (Theorem \ref{keyLGT}, 1)).

\section{The isomorphism $Cmd_0(G) = Rel(\beta^G$)} \label{Cmd_0}

The purpose of this section is to establish an isomorphism of categories between $Cmd_0(G)$ and $Rel(\beta^G)$, where $G$ is a 
fixed localic group, or, what amounts to the same thing,
an idempotent Hopf algebra in the monoidal category $Sup$ of sup-lattices, as we explained in section \ref{background}. 

\begin{sinnadastandard} {\bf The category $Cmd_0(G)$.} 
\end{sinnadastandard}
As for any coalgebra, a \emph{comodule} structure over $G$ in $Sup$ is a 
sup-lattice  $S \in Sup$ together with a map $S \stackrel{\rho}{\rightarrow} G \otimes S$ satisfying the coaction axioms:
\begin{equation} \label{coaction}
(G \otimes \rho) \circ \rho =  (w \otimes S) \circ \rho, \;\; and \;\; 
 (e \otimes S) \circ \rho = \; \cong_S.
\end{equation}
where $w$, $e$ are the comultiplication and the counit of $G$, and $\cong_S$ is the isomorphism $2 \otimes S \cong S$.

A \emph{comodule morphism} between two comodules is a map which makes the
usual diagrams commute (see \cite{JS}). We define the category $Cmd_0(G)$ as the full subcategory 
with objects the comodules of the form $S = \ell X$, for any set $X$. If we forget the comodule structure we have a faithful functor 
$$Cmd_0(G) \mr{T} Sup_0 = \cc{R}el.$$

\begin{sinnadastandard} {\bf The category $\beta^G$.} \end{sinnadastandard}
The construction of the category $\beta^G$ of sets furnished with an action of $G$ (namely,
the \emph{classifying topos} of $G$) requires
some considerations (for details see \cite{D1}).
Given a set $X$, we define the locale $Aut(F)$ to be the universal \mbox{$\ell$-bijection} in the category of locales, $X \times X \mr{\lambda} Aut(F)$. It is constructed in two steps: first consider the free locale on $X \times X$, $X \times X \mr{\jmath} Rel(X)$. Clearly it is the universal $\ell$-relation in the category of locales. Second, $Rel(X) \mr{} Aut(X)$ is determined by the topology generated by the covers that force the four axioms in \ref{bijection} (see \cite{W}, \cite{D1}).
 Notice that it follows by definition that the points of the locales $Rel(X)$ and $Aut(X)$ are the relations and the bijections of the set $X$.
Given $(x,\,y) \in X \times X$, 
we will denote $\langle x\,|\,y \rangle  = \jmath \langle x,y \rangle = \lambda \langle x,y \rangle$ indistinctly in both cases. We abuse notation and omit to indicate the associated sheaf morphism  $Rel(X) \mr{}  Aut(X)$. The elements of the form $\langle x\,|\,y \rangle$ generate both locales by taking arbitrary suprema of finite infima. 

It is straightforward to check that the following maps are $\ell$-bijections.
\begin{multline} \label{algebrastructure}
w: \; X \times X  \mr{}  Aut(X) \otimes Aut(X),\;\;
w \langle x \:|\:y \rangle  \;=\;
 \bigvee\nolimits_{z} \; \langle  x \:|\:z  \rangle \otimes \langle  z \:|\:y  \rangle \,, 
\\
\hspace{-36ex} e: \; X \times X \mr{} 2, \;\;  e \langle  x\,|\,y  \rangle  \;=\;  \delta_{x=y}.
\\
\hspace{-28ex} \iota: \; X \times X \mr{} Aut(X), \;\;
\iota \langle  x \:|\:y  \rangle  \;=\; \langle  y \:|\:x  \rangle \,.
\\
\end{multline}

 It follows (from the universal property) that they determine locale morphisms with domain $Aut(X)$. They define a coalgebra structure on the locale $Aut(X)$, which furthermore results a Hopf algebra (or localic group).

\vspace{1ex}

An \emph{action} of a localic group $G$ in a set $X$ is defined as a localic
group morphism $G \mr{\widehat \mu} Aut (X)$. This
corresponds to a Hopf algebra morphism $Aut(X) \mr{\mu} G$, which 
is completely determined by its value on the generators, that is,
a $\ell$-bijection $X \times X \mr{\mu} G$, that in addition satisfies
\begin{equation} \label{morfismo}
w \mu  \;=\; (\mu  \otimes \mu )w \,,\;\;\;\;\;\;
e \mu  \;=\; e \,,\;\;\;\;\;\; 
\mu \iota  \;=\; \iota \mu. 
\end{equation}
(the structures in both Hopf algebras are indicated with the same letters).

As we shall see in Proposition \ref{monoidaction}, the equation $\mu \iota  \;=\; \iota \mu$ follows from the other two. That is, any action of $G$ viewed as a monoid is automatically a group action.

Given two objects $X, X' \in \beta^G$, a \emph{morphism} between them is a function between the sets $X \mr{f} X'$ 
satisfying $\mu\langle a|b \rangle  \leq \mu'\langle f(a)|f(b) \rangle $. Notice that this is a $\;\rhd\;$ diagram as in section \ref{sec:general}.

\vspace{1ex}

If we forget the action we have a faithful functor $\beta^G \mr{F} \cc{E}ns$  (which is the inverse image of a point of the topos, see \cite{D1} Proposition 8.2). Thus, we have a commutative square (see \ref{rel}):
$$
\xymatrix
        {
           \beta^G \ar[r] \ar[d]_F
         & \cc{R}el(\beta^G) \ar[d]^{\cc{R}el(F)}
         \\
           \cc{E}ns \ar[r] 
         & \cc{R}el.
        }
$$
We have the following theorem, that we will prove in the rest of this section.
\begin{theorem} \label{Comd=Rel}
There is an isomorphism of categories making the triangle commutative:
$$
\xymatrix@C=0ex
        {
          Cmd_0(G) \ar[rr]^{=} \ar[rd]_T 
      & & \cc{R}el(\beta^G) \ar[ld]^(.4){\cc{R}el(F)} 
        \\
         & Sup_0 = \cc{R}el.
        }       
$$ 
The identification between relations $R \subset X \times X'$ and linear maps $\ell X \to \ell X'$ lifts to the upper part of the triangle.

\vspace{-2.5ex}

\hfill $\Box$
\end{theorem}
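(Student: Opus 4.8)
\quad The plan is to realize the isomorphism as the identity on the level of $\cc{R}el = Sup_0$: on objects it sends a comodule $(\ell X,\rho)$ to the $G$-set $(X,\mu_\rho)$ and back, and on morphisms it is exactly the identification of a relation $R\subset X\times X'$ with the linear map $\ell X\to\ell X'$ it induces. Granting the object-level and morphism-level bijections, commutativity of the triangle is immediate, since $T(\ell X,\rho)=\ell X=\cc{R}el(F)(X)$ and both $T$ and $\cc{R}el(F)$ act as the identity on underlying relations; functoriality is immediate too, as composition on both sides is composition of linear maps.

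\emph{Objects.}\ The object $\ell X$ is self-dual in $Sup$, with unit and counit $\eta,\varepsilon$ both given by the diagonal. Hence a linear map $\rho\colon\ell X\to G\otimes\ell X$ has a mate $\mu_\rho:=(G\otimes\varepsilon)\circ(\rho\otimes\ell X)\colon\ell X\otimes\ell X\to G$, that is a $\ell$-relation $X\times X\to G$, and $\rho\mapsto\mu_\rho$ is a bijection with inverse $\mu\mapsto(\mu\otimes\ell X)\circ(\ell X\otimes\eta)$ by the triangular identities. On generators this reads $\rho(\{x\})=\bigvee_{y}\mu_\rho\langle x,y\rangle\otimes\{y\}$; expanding the two coaction axioms \eqref{coaction} and comparing components in $G\otimes G\otimes\ell X=\prod_{y}(G\otimes G)$, respectively in $2\otimes\ell X\cong\ell X$, one sees that coassociativity is exactly $w\mu=(\mu\otimes\mu)w$ and the counit axiom is exactly $e\mu=e$ --- the first two equations of \eqref{morfismo}. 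It remains to check that such a $\mu_\rho$ is automatically a $\ell$-bijection, so that $(X,\mu_\rho)$ is an object of $\beta^G$: by Proposition \ref{monoidaction} the two equations force $\iota\mu=\mu\iota$, i.e.\ $\iota(\mu\langle a,b\rangle)=\mu\langle b,a\rangle$, and substituting this into the two antipode identities of the Hopf algebra $G$ and using that $G$ is a locale ($*=\wedge$, $u=1$), the case $a=b$ yields axioms $ed)$ and $su)$ while the case $a\neq b$ yields $uv)$ and $in)$ of \ref{bijection}. Conversely, every object of $\beta^G$ is such a $\ell$-bijection, whose first two structure equations reconstitute a coaction; so objects correspond bijectively.

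\emph{Morphisms.}\ A morphism $(\ell X,\rho)\to(\ell X',\rho')$ of $Cmd_0(G)$ is a linear map $\phi\colon\ell X\to\ell X'$ with $(G\otimes\phi)\rho=\rho'\phi$, equivalently a relation $R\subset X\times X'$; chasing the generator $\{x\}$ through $(G\otimes\phi)\rho=\rho'\phi$ and comparing the components at $y'\in X'$ gives, for all $x\in X$ and $y'\in X'$,
\[
\bigvee_{(y,y')\in R}\mu_X\langle x,y\rangle\;=\;\bigvee_{(x,x')\in R}\mu_{X'}\langle x',y'\rangle,
\]
which is precisely the equation $\Diamond(R)=\Diamond(TR,TR)$ of section~\ref{sec:general} for the family $\{\mu_{\bullet}\}$. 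A morphism $X\to X'$ of $\cc{R}el(\beta^G)$, on the other hand, is a subobject $R\mono X\times X'$ in $\beta^G$; since $F$ is faithful and preserves monomorphisms (being the inverse image of a point), this is a subset $R\subset X\times X'$ equipped with a $G$-action $\mu_R$ making the inclusion a morphism of $\beta^G$, that is $\mu_R\le\theta$, where $\theta$ is $\mu_X\boxtimes\mu_{X'}$ restricted to $R\times R$ (note $\mu_{X\times X'}=\mu_X\boxtimes\mu_{X'}$, the action on a product being diagonal, which is the compatibility of Definition \ref{comp}). Comparing axioms $ed),su),uv),in)$ for the $\ell$-bijections $\mu_R\le\mu_X\boxtimes\mu_{X'}$ forces $\mu_R=\theta$, so the action on a subobject is unique when it exists, and by Proposition \ref{combinacion} its existence amounts to $\theta$ being a $\ell$-bijection, equivalently to $\Diamond(R)$. (Consistently, $\{\mu_{\bullet}\}$ is a $\rhd$-cone of $\ell$-bijections valued in the locale $G$ by the very definition of morphisms of $\beta^G$, hence a $\Diamond$-cone by Proposition \ref{trianguloesdiamante}.) Thus both hom-sets equal the set of relations $R\subset X\times X'$ satisfying $\Diamond(R)$, and the bijection on morphisms is the identity of $\cc{R}el$.

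\emph{Main obstacle.}\ The formal skeleton (self-duality mate, $\cc{R}el=Sup_0$, ``everything is the identity over $\cc{R}el$'') is routine; the real content is the dictionary between the comodule axioms and the $\Diamond/\rhd$ equations. The two nontrivial inputs are: (i) that a comodule structure on a free sup-lattice $\ell X$ automatically produces a $\ell$-bijection, which uses Proposition \ref{monoidaction}, the antipode, and the idempotency of $G$; and (ii) the identification of ``relation in $\beta^G$'' with ``$\Diamond(R)$'', which leans on the $\ell$-relation calculus of section~\ref{sec:general} (Propositions \ref{productrelation} and \ref{combinacion}) and requires one to check that the restricted action $\theta$ lies again in $\beta^G$, i.e.\ satisfies \eqref{morfismo}: here $e\theta=e$ is immediate, $\iota\theta=\theta\iota$ is Proposition \ref{monoidaction}, and $w\theta=(\theta\otimes\theta)w$ falls out of $\Diamond(R)$ together with the compatibility of the cone. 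The remainder --- identities, composition, naturality of the triangle --- is bookkeeping.
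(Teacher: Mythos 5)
Your proposal is correct and follows essentially the same route as the paper: the object bijection via the self-duality mate $\rho\leftrightarrow\mu$ with the coaction axioms matching the first two equations of \eqref{morfismo} and the third (plus the $\ell$-bijection axioms) supplied by Proposition \ref{monoidaction}, and the morphism bijection via the equivalence of the comodule-morphism condition with $\Diamond(R)$ and hence, by Propositions \ref{mono1}, \ref{mono2} and \ref{combinacion}, with $R$ being a subobject of $X\times X'$ in $\beta^G$. The only cosmetic difference is that you verify the equivalence of the comodule diagram with $\Diamond(R)$ by chasing generators and comparing components in $G\otimes\ell X$, where the paper (Proposition \ref{conclaim}) does the same computation with the elevator calculus.
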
 
 
\vspace{1ex}       

Recall that since the functor $F$ is the inverse image of a point, it follows that monomorphisms of $G$-sets are injective maps.
\begin{proposition} \label{mono1}
 Let $f: X \rightarrow X'$ a morphism of $G$-sets. Then for each 
 \mbox{$a, b \in X$}, $$ \mu'  \langle f(a)|f(b) \rangle  = \bigvee_{f(x)=f(b)} \mu  \langle a|x \rangle .$$
In particular, if $f$ is a monomorphism, we have $\mu'  \langle f(a)|f(b) \rangle  = \mu  \langle a|b \rangle $.
\end{proposition}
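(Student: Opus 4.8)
The plan is to exploit the fact that the defining condition for a morphism of $G$-sets, $\mu\langle a|b\rangle \leq \mu'\langle f(a)|f(b)\rangle$, is precisely a $\rhd(f,f)$ diagram in the sense of section \ref{sec:general}, and then apply Proposition \ref{rhdimpliesdiamond} to upgrade it to the corresponding $\Diamond_1$ diagram, which is exactly the asserted formula. The key observation is that $G$ here is a locale (the Hopf algebra $Aut(X)$, or the locale underlying any localic group, is a locale in $Sup$ — see \ref{charlocales} and section \ref{background}), so the standing hypothesis "$G$ is a locale" of the relevant part of section \ref{sec:general} is in force. Moreover, the $\ell$-relations $\mu: X\times X \to G$ and $\mu': X'\times X' \to G$ are $\ell$-bijections by the definition of an action (they are locale morphisms restricted to generators, hence satisfy axioms $ed)$, $uv)$, $su)$, $in)$); in particular they are $\ell$-functions, so both $\lambda$-is-$ed)$ and $\lambda'$-is-$uv)$ hold.

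First I would record that the morphism condition $\mu\langle a|b\rangle \leq \mu'\langle f(a)|f(b)\rangle$ is the $\rhd(f,f)$ diagram with $\lambda = \mu$, $\lambda' = \mu'$ (here the single set $X$ plays the role of both $X$ and $Y$ in the notation of \ref{diagramadiamante12}, and $f=g$). Then I would invoke Proposition \ref{rhdimpliesdiamond}(1): since $\lambda=\mu$ is $ed)$ and $\lambda'=\mu'$ is $uv)$, the diagram $\rhd(f,f)$ implies $\Diamond_1(f,f)$, which by the explicit description in \ref{diagramadiamante12} reads
$$
\mu'\langle f(a)|b'\rangle = \bigvee_{f(y)=b'} \mu\langle a|y\rangle
$$
for all $a \in X$, $b' \in X'$. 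Specializing $b' = f(b)$ gives the displayed identity of the proposition.

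For the "in particular" clause, suppose $f$ is a monomorphism; as recalled just before the statement, $f$ is then an injective function, so $f(y)=f(b)$ forces $y=b$ and the supremum on the right collapses to the single term $\mu\langle a|b\rangle$, yielding $\mu'\langle f(a)|f(b)\rangle = \mu\langle a|b\rangle$. I do not expect any real obstacle here: the content is entirely in recognizing the morphism axiom as a $\rhd$-diagram and checking that the hypotheses of Proposition \ref{rhdimpliesdiamond} are met, i.e. that $\mu,\mu'$ are $\ell$-functions and $G$ is a locale — both of which are immediate from the setup in this section. The only point requiring a word of care is the bookkeeping that the "$X=Y$, $f=g$" specialization of the general two-variable formalism of section \ref{sec:general} is legitimate, which it is since that section explicitly allows $X=Y$ and the cone constructions in section 3 already use exactly this specialization.
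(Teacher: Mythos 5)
Your proposal is correct and follows essentially the same route as the paper: recognize the morphism condition as the $\rhd(f,f)$ diagram, apply Proposition \ref{rhdimpliesdiamond} (using that actions are $\ell$-bijections, hence $\ell$-functions) to obtain $\Diamond_1(f,f)$, and evaluate at $(a,f(b))$, with injectivity of monomorphisms collapsing the supremum for the final clause.
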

\begin{proof}
Since the actions are $\ell$-bijections, in particular $\ell$-functions, by  proposition \ref{rhdimpliesdiamond} the $\rhd$ diagram implies the $\Diamond_1$ diagram. The statement follows by taking $(a,f(b)) \in X \times X'$.
\end{proof}
Proposition \ref{mono1} says that the subobjects $Z \mono X$ of an object $X$ in                               
$\beta^G$ are the subsets $Z \subset X$ such that the restriction of the action \mbox{$Z \times Z \subset X \times X \mr{\mu} G$} is an action on $Z$. We have:
\begin{proposition} \label{mono2}
Let $X$ be a $G$-set and $Z \subset X$ any subset. If the restriction of the action to $Z$ is a $\ell$-bijection, then it is already an action.
\end{proposition}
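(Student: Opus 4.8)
The plan is to check directly that the restriction $\mu_Z = \mu|_{Z\times Z}\colon Z\times Z \to G$ verifies the three equations \eqref{morfismo} defining an action. By hypothesis $\mu_Z$ is a $\ell$-bijection into the locale $G$, so the universal property of $Aut(Z)$ already provides a locale morphism $Aut(Z)\to G$; what remains is its compatibility with the Hopf structures. The counit equation $e\mu_Z = e$ is just the restriction to $Z\times Z$ of $e\mu = e$, and the antipode equation $\mu_Z\iota = \iota\mu_Z$ follows from Proposition \ref{monoidaction} (or, directly, by restricting $\mu\iota = \iota\mu$ and using $\iota\langle x|y\rangle = \langle y|x\rangle$). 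So the whole matter reduces to the comultiplication equation $w\mu_Z = (\mu_Z\otimes\mu_Z)w$ which, since both sides are locale morphisms $Aut(Z)\to G\otimes G$, need only be checked on a generator $\langle z|y\rangle$ with $z,y\in Z$, where it reads
$$
w\bigl(\mu\langle z|y\rangle\bigr) \;=\; \bigvee_{z'\in Z}\mu\langle z|z'\rangle\otimes\mu\langle z'|y\rangle .
$$

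The crux is the following vanishing statement, and it is the only place where we use that the restriction is a $\ell$-bijection rather than merely a $\ell$-relation: \emph{for $z\in Z$ and $x'\in X\setminus Z$ one has $\mu\langle z|x'\rangle = 0$} (and symmetrically $\mu\langle x'|z\rangle = 0$). I would deduce it from Proposition \ref{rhdimpliesdiamond}: take $f=g$ to be the inclusion $j\colon Z\hookrightarrow X$, $\lambda = \mu_Z$, $\lambda' = \mu$; the diagram $\rhd(j,j)$ holds trivially, being the equality $\mu_Z\langle a|b\rangle = \mu\langle a|b\rangle$. Since $\mu_Z$ satisfies $ed)$ and $\mu$ satisfies $uv)$, the $ed)/uv)$ part of Proposition \ref{rhdimpliesdiamond} yields $\Diamond_1(j,j)$, that is $\mu\langle a|b'\rangle = \bigvee_{j(y)=b'}\mu\langle a|y\rangle$ for $a\in Z$, $b'\in X$; for $b'\notin Z$ the index set is empty, so $\mu\langle a|b'\rangle = 0$. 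The symmetric statement comes the same way from $\Diamond_2(j,j)$, using $su)$ for $\mu_Z$ and $in)$ for $\mu$. (One can also argue by hand: $\mu\langle z|x'\rangle = \mu\langle z|x'\rangle\wedge\bigvee_{z'\in Z}\mu\langle z|z'\rangle = \bigvee_{z'\in Z}\bigl(\mu\langle z|x'\rangle\wedge\mu\langle z|z'\rangle\bigr) = 0$, using $ed)$ for $\mu_Z$, distributivity in the locale $G$, and $uv)$ for $\mu$ since $x'\neq z'$.)

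Granting the vanishing statement, the comultiplication equation is immediate. By the action equation for $X$ we have $w(\mu\langle z|y\rangle) = (\mu\otimes\mu)w\langle z|y\rangle = \bigvee_{x'\in X}\mu\langle z|x'\rangle\otimes\mu\langle x'|y\rangle$, and every term with $x'\notin Z$ vanishes because $\mu\langle z|x'\rangle = 0$ forces $\mu\langle z|x'\rangle\otimes\mu\langle x'|y\rangle = 0$ by bilinearity of $\otimes$; hence the supremum reduces to $\bigvee_{z'\in Z}\mu\langle z|z'\rangle\otimes\mu\langle z'|y\rangle$, as required. Therefore $\mu_Z$ is an action of $G$ on $Z$. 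The main obstacle is isolating and establishing the vanishing statement; once it is in hand the rest is purely formal and uses nothing beyond the axioms of \ref{bijection}, the locale structure of $G$, and the action equations already assumed for $X$.
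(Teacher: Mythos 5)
Your proof is correct, and it runs on the same fuel as the paper's: the only equation in \ref{morfismo} that needs work is the comultiplication one, and the point is that the supremum over $x\in X$ in $(\mu\otimes\mu)w_X\langle a|b\rangle$ collapses to $x\in Z$, as a consequence of $ed)/su)$ for the restriction combined with $uv)/in)$ for $\mu$ on $X$. The difference lies in how that collapse is organized. The paper takes the infimum of $\bigvee_{x\in X}\mu\langle a|x\rangle\otimes\mu\langle x|b\rangle$ with $1=\bigvee_{y,z\in Z}\mu\langle a|y\rangle\otimes\mu\langle z|b\rangle$ and kills the cross terms with $uv)$ and $in)$; this is a computation inside the locale $G\otimes G$ and tacitly uses that $(g\otimes h)\wedge(g'\otimes h')=(g\wedge g')\otimes(h\wedge h')$ there. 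You instead isolate the pointwise vanishing $\mu\langle z|x'\rangle=0$ for $z\in Z$, $x'\notin Z$ (and its mirror image), prove it entirely inside $G$ --- either by the one-line computation with $ed)$, distributivity and $uv)$, or as the degenerate case of $\Diamond_1(j,j)$ for the inclusion $j\colon Z\hookrightarrow X$ via Proposition \ref{rhdimpliesdiamond} --- and then discard the offending terms of the supremum using only $0\otimes h=0$. Your decomposition is a bit more modular and sidesteps the unstated fact about infima of elementary tensors in $G\otimes G$, at the price of an extra lemma; the paper's version is shorter on the page. Your treatment of the counit (plain restriction) and of the antipode (automatic, by Proposition \ref{monoidaction}) is exactly what the paper does.
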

\begin{proof}
We have to check the equations in \ref{morfismo}. The only one that requires some care is the first. Here it is convenient to distinguish notationally as $w_Z$, $w_X$ and $w$ the comultiplications of $Aut(Z)$, $Aut(X)$ and $G$ respectively. By hypothesis we have (1)
$w\mu \langle a|b \rangle = (\mu \otimes \mu) w_X \langle a|b \rangle = \displaystyle \bigvee_{x \in X} \mu \langle 
a|x \rangle \otimes \mu \langle x|b \rangle$. We claim that when 
$a,\, b \in Z$, this equation still holds by restricting the supremum to the $x \in Z$, which is the equation $w\mu \langle a|b \rangle = (\mu \otimes \mu) w_Z$. 
In fact, from axioms $ed)$ and $su)$ for $\mu$ on $Z$ it follows  
(2) $1 = \displaystyle \bigvee_{y,\, z \,\in Z} \mu \langle a|y \rangle \otimes \mu \langle z|b \rangle$. Then, the claim follows by taking the infimum in both sides of equations (1) and (2), and then using the axioms $uv)$ and $in)$ for $\mu$ on $X$.
\end{proof}

\begin{proposition}  \label{monoidaction}
 Given a localic group $G$ and a localic monoid morphism $G \stackrel{\widehat{\mu}}{\rightarrow} Rel(X)$, there exists a unique action of $G$ in $X$ such that
$$ 
\xymatrix@C=4ex@R=4ex
         {
          Rel(X) 
          & & G, \ar[ll]_{\widehat{\mu}} \ar@{-->}[dl]^{\widehat\mu}	
          & \hbox{i.e.} 
          & Rel(X) \ar[rr]^{\mu} \ar[rd] 
          & & G. 
          \\ 	
		  & Aut(X) \ar[ul] 
		  & & & & Aut(X) \ar@{-->}[ru]_{\mu} 
         } 
$$
\end{proposition}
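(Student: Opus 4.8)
The plan is to pass throughout to the dual, locale-theoretic picture. By the universal property of $Rel(X)$ as the free locale on $X\times X$, the given localic monoid morphism $G\mr{\widehat\mu}Rel(X)$ is the same datum as a locale morphism $Rel(X)\mr{\mu}G$, i.e.\ an $\ell$-relation $X\times X\mr{\mu}G$ satisfying $w\mu\langle x|y\rangle=\bigvee_z\mu\langle x|z\rangle\otimes\mu\langle z|y\rangle$ and $e\mu\langle x|y\rangle=\delta_{x=y}$ --- exactly the first two equations of \eqref{morfismo}. Since $Aut(X)$ is the universal $\ell$-bijection in locales, it then suffices to prove that this $\mu$ is an $\ell$-bijection and, moreover, that it automatically satisfies the remaining equation $\mu\iota=\iota\mu$ of \eqref{morfismo} (as announced in the remark preceding the statement). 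Granting this, $\mu$ factors through $Rel(X)\mr{}Aut(X)$ as a Hopf algebra morphism $Aut(X)\mr{\mu}G$, equivalently as an action $G\mr{}Aut(X)$; and such a factorization is unique because the elements $\langle x|y\rangle$ generate $Aut(X)$, so that any candidate is determined by the values $\mu\langle x|y\rangle$.

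The input from the \emph{group} structure of $G$ is extracted at once from its two antipode identities $\wedge\circ(G\otimes\iota)\circ w=u\circ e=\wedge\circ(\iota\otimes G)\circ w$ (with $u=1$, $G$ being a locale): feeding in the comultiplication formula for $\mu$ and using $e\mu=e$ gives, for all $x,y$,
\[
\bigvee_z\mu\langle x|z\rangle\wedge\iota\mu\langle z|y\rangle \;=\; \delta_{x=y}\;=\;\bigvee_z\iota\mu\langle x|z\rangle\wedge\mu\langle z|y\rangle ,
\]
where the middle term now denotes $1$ or $0$ in $G$; I shall refer to this pair of identities as $(\star)$. Taking $x=y$ in $(\star)$ and bounding a meet above by a single one of its factors immediately yields $\bigvee_z\mu\langle x|z\rangle=1=\bigvee_z\mu\langle z|x\rangle$, i.e.\ axioms $ed)$ and $su)$ for $\mu$.

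The step I expect to be the only genuinely nontrivial point --- where $(\star)$ is used essentially, beyond the comonoid-morphism equations --- is a ``collapsing'' computation that produces $uv)$, $in)$ and $\mu\iota=\iota\mu$ simultaneously. Since $\iota$ preserves suprema and $1$, from $ed)$ and $su)$ one has the two decompositions $1=\bigvee_c\iota\mu\langle y|c\rangle=\bigvee_c\mu\langle c|y\rangle$; and the $x\neq y$ instances of the first form of $(\star)$, applied to $\mu\langle x|c\rangle$ and to $\mu\langle c|x\rangle$, give $\mu\langle x|y\rangle\wedge\iota\mu\langle y|c\rangle=0$ and $\iota\mu\langle y|x\rangle\wedge\mu\langle c|y\rangle=0$ for every $c\neq x$. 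Meeting $\mu\langle x|y\rangle$, respectively $\iota\mu\langle y|x\rangle$, with the appropriate decomposition of $1$ and discarding all but the surviving term $c=x$ then yields $\mu\langle x|y\rangle\le\iota\mu\langle y|x\rangle$ and $\iota\mu\langle y|x\rangle\le\mu\langle x|y\rangle$, hence $\mu\langle x|y\rangle=\iota\mu\langle y|x\rangle$, which is precisely the third equation of \eqref{morfismo}. With this identity at hand, $uv)$ is immediate: for $b_1\neq b_2$,
\[
\mu\langle a|b_1\rangle\wedge\mu\langle a|b_2\rangle \;=\; \iota\mu\langle b_1|a\rangle\wedge\mu\langle a|b_2\rangle \;=\; 0 ,
\]
the last step being the $z=a$ term of $\bigvee_z\iota\mu\langle b_1|z\rangle\wedge\mu\langle z|b_2\rangle=\delta_{b_1=b_2}=0$ from $(\star)$; and $in)$ follows symmetrically, using the first form of $(\star)$.

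This completes the argument: $\mu$ is an $\ell$-bijection satisfying all the equations of \eqref{morfismo}, hence defines an action of $G$ on $X$, and by construction it factors $\widehat\mu$ through $Aut(X)\mr{}Rel(X)$, so the triangle commutes; uniqueness holds since any such morphism is determined by its values on the generators $\langle x|y\rangle$. The remaining verifications --- the universal-property bookkeeping identifying $\mu$ with the dual of $\widehat\mu$, and the fact that the factored map respects $w$ and $e$ on generators --- are routine within the diagram/elevator calculus already set up.
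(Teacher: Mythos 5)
Your proposal is correct and follows essentially the same route as the paper: both extract the key identities from the two antipode equations of the localic group (evaluated on the diagonal to get $ed$/$su$, and off the diagonal to get the vanishing of cross-terms), then use them to deduce $\mu\iota=\iota\mu$ and the remaining bijection axioms. The only cosmetic difference is that you prove both inequalities $\mu\langle x|y\rangle\le\iota\mu\langle y|x\rangle$ and $\iota\mu\langle y|x\rangle\le\mu\langle x|y\rangle$ directly from the two antipode identities, whereas the paper proves one and obtains the other from $\iota^2=\mathrm{id}$.
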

\begin{proof}
 $\mu$ is determined by a $\ell$-relation $X \times X \mr{\mu} G$ preserving $w$ and $e$ (see equations \ref{morfismo}). It factorizes through $Aut(X)$ provided it is a $\ell$-bijection, and the factorization defines an action if it also preserves $\iota$. 

Consider the following commutative diagram
$$
\xymatrix
         {
          & X \times X \ar[r]^(.35){w} \ar[d]^\mu \ar[dl]_e
          & Rel(X) \otimes Rel(X) \ar[d]^{\mu \otimes \mu}
          \\
          2 \ar[dr]_u
          & G \ar[r]^w \ar[l]_e
          & G \otimes G \ar@<-1.5ex>[d]_{\iota \otimes G}
                        \ar@<1.5ex>[d]^{G \otimes \iota}
          \\
          & G
          & G \otimes G. \ar[l]_\wedge
         } 
$$
Chasing an element $(b,\, b) \in X \times X$  all the way down to $G$ using the arrow $G \otimes \iota$ it follows
$\displaystyle \bigvee_{y} \mu  \langle b|y \rangle  \wedge \iota \mu  \langle y|b \rangle  = 1$. Thus, in particular, we have  
\mbox{(1) $\displaystyle\bigvee_{y} \mu  \langle b|y \rangle  = 1$.}
Chasing in the same way an element $(a,\, b)$ with $a \neq b$, but this time using the arrow $\iota \otimes G$, it follows 
\mbox{$\displaystyle \bigvee_{x} \iota  \mu  \langle a|x \rangle  \wedge \mu  \langle x|b \rangle  = 0$. Thus}
\mbox{(2) $\iota  \mu  \langle a|x \rangle  \wedge \mu  \langle x|b \rangle  = 0$ for all $x$.}

\vspace{1ex}

We will see now that $\iota \mu \leq \mu \iota$ (since $\iota^2 = id$, it follows that also $\mu \iota \leq \iota \mu$).

\vspace{1ex}
 
$ \iota  \mu  \langle a|b \rangle  \stackrel{(1)}{=} \iota  \mu  \langle a|b \rangle  \wedge \displaystyle\bigvee_{y} \mu  \langle b|y \rangle  = $
$ \displaystyle\bigvee_{y} \iota  \mu  \langle a|b \rangle  \wedge \mu  \langle b|y \rangle  \stackrel{(2)}{=} \iota  \mu  \langle a|b \rangle  \wedge \mu  \langle b|a \rangle$, 
since all the other terms in the supremum are $0$. 
Then $\iota  \mu  \langle a|b \rangle  \leq \mu  \langle b|a \rangle = \mu \iota \langle a|b \rangle $. 

\vspace{1ex}

Thus we have $\iota  \mu  \langle a|b \rangle  = \mu \iota \langle a|b \rangle \; ( = \mu  \langle b|a \rangle)$. With this, it is clear from the equations (1) and (2) above that the four axioms \ref{bijection} of a $\ell$-bijection hold. 
\end{proof}

\begin{proposition} 
There is a bijection between the objects of the categories $Cmd_0(G)$ and $Rel(\beta^G)$.
\end{proposition}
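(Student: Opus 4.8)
The plan is to fix a set $X$ and produce a bijection between comodule structures $\ell X\mr{\rho}G\otimes \ell X$ on $\ell X$ and action structures $X\times X\mr{\mu}G$ of $G$ on $X$; letting $X$ range over all sets then gives the asserted bijection of objects, since an object of $Rel(\beta^G)$ is by definition an object of $\beta^G$ (a set $X$ with an $\ell$-bijection $\mu$ satisfying \eqref{morfismo}), and an object of $Cmd_0(G)$ is a set $X$ with a comodule structure $\rho$ on $\ell X$ satisfying \eqref{coaction}.

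First I would use the self-duality $(\ell X)^\wedge=(\ell X)^\vee=\ell X$, whose unit $2\mr{\eta}\ell X\otimes \ell X$ and counit $\ell X\otimes \ell X\mr{\varepsilon}2$ both correspond to the diagonal relation. The standard mate correspondence for a dualizable object gives a bijection between linear maps $\ell X\mr{\rho}G\otimes \ell X$ and $\ell$-relations $\ell X\otimes \ell X\mr{\lambda}G$, with $\lambda=(G\otimes\varepsilon)\circ(\rho\otimes \ell X)$ and $\rho=(\lambda\otimes \ell X)\circ(\ell X\otimes\eta)$; the triangular equations (see \ref{triangular}) show these are mutually inverse. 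On elements this reads $\rho(x)=\bigvee_{y}\lambda\langle x,y\rangle\otimes y$.

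Next I would transport the two coaction axioms \eqref{coaction} along this bijection, using the elevator calculus of Appendix \ref{ascensores} (or the explicit formula for $\rho$). The counit axiom $(e\otimes \ell X)\circ\rho \;=\; \cong_{\ell X}$ becomes $e\lambda\langle x,y\rangle=\delta_{x=y}$, i.e.\ $e\lambda=e$; and the coassociativity axiom $(G\otimes\rho)\circ\rho=(w\otimes \ell X)\circ\rho$ becomes $w(\lambda\langle x,z\rangle)=\bigvee_{y}\lambda\langle x,y\rangle\otimes\lambda\langle y,z\rangle$, which, since $w\langle x\,|\,z\rangle=\bigvee_{y}\langle x\,|\,y\rangle\otimes\langle y\,|\,z\rangle$ on the generators of $Aut(X)$ (see \eqref{algebrastructure}), is exactly the equation $w\lambda=(\lambda\otimes\lambda)w$. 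Hence comodule structures on $\ell X$ correspond precisely to $\ell$-relations $X\times X\mr{\mu}G$ preserving $w$ and $e$.

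Finally I would invoke Proposition \ref{monoidaction}: an $\ell$-relation $X\times X\mr{\mu}G$ preserving $w$ and $e$ is the same datum as a localic monoid morphism $G\to Rel(X)$, and by that proposition it is automatically an $\ell$-bijection and automatically preserves $\iota$; hence it factors through $Aut(X)$ and defines an action of $G$ on $X$, that is, an object of $\beta^G$. Conversely every object of $\beta^G$ is in particular such a $\mu$. Composing this with the mate bijection of the second paragraph and the translation of the third yields the desired bijection of objects. I expect the only genuinely computational step to be that translation of the coaction diagrams: it is routine with the string/elevator calculus but must be carried out carefully because of the direction of $\rho$ and the bookkeeping of the self-duality isomorphisms, whereas the mate bijection and the appeal to \ref{monoidaction} are purely formal.
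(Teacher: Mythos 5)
Your proposal is correct and follows essentially the same route as the paper: the mate correspondence $\rho \leftrightarrow \mu$ via the self-duality of $\ell X$, the translation of the coaction axioms \eqref{coaction} into the first two equations of \eqref{morfismo}, and the appeal to Proposition \ref{monoidaction} to get the $\ell$-bijection property and preservation of $\iota$ for free. The extra explicit form you give for the transported axioms is exactly the verification the paper leaves to the elevator calculus.
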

\begin{proof}
 Since $(\ell X)^\wedge = \ell X$, we have a bijection of linear maps
$$ 
\xymatrix@R=0.1pc{& \ell X \ar[r]^{\rho} & G \otimes \ell X \\
	    \ar@{-}[rrr] & & & \\
	     & \ell X \otimes \ell X \ar[r]^{\mu } & G.}
$$
As with every duality $(\varepsilon$, $\eta$), $\mu $ is defined as the composition 
$$
 \xymatrix { \mu : \ell X \otimes \ell X \ar[r]^{\rho \otimes \ell X} & G \otimes \ell X \otimes \ell X \ar[r]^(.65){G \otimes \varepsilon} & G.}
$$
And conversely, we construct $\rho$ as the composition
$$
 \xymatrix { \rho : \ell X \ar[r]^(.35){\ell X \otimes \eta} & \ell X \otimes \ell X \otimes \ell X \ar[r]^(.6){\mu  \otimes \ell X} & G \otimes \ell X.}
$$
It is easy to check (for example, using the elevators calculus) that 
that $\rho$ satisfies equations \ref{coaction} if and only if $\mu$ satisfies the first two equations \ref{morfismo} (by proposition \ref{monoidaction}, such a $\mu$ satisfies also the third equation).  
\end{proof}

The product of two $G$-sets $X$ and $X'$ is equipped with the action given by the product $\ell$-relation $\mu \boxtimes \mu'$ (\ref{productrelationdef}),  which is an action by proposition \ref{productrelation}.
 
\vspace{1ex}

An arrow of the category $Rel(\beta^G)$ is a monomorphism 
$R \hookrightarrow X \times X'$, in particular, a relation of sets
$R \subset X \times X'$. 
It follows from propositions \ref{mono1} and \ref{mono2}, that a relation $R \hookrightarrow X \times X'$ in the category $\beta^G$ is the same thing that a relation of sets $R \subset X \times X'$ such that the restriction of the product action to $R$ is still a $\ell$-bijection (on $R$). The following proposition finishes the proof of theorem  \ref{Comd=Rel}.
\begin{proposition} \label{conclaim}
Let $X$, $X'$ be any two $G$-sets, and $R \subset X \times X'$ a relation on the underlying sets. Then, $R$ underlines a monomorphism of \mbox{$G$-sets} $R \hookrightarrow X \times X'$ if and only if the corresponding linear map 
$R: \ell X \rightarrow \ell X'$ is a comodule morphism.
\end{proposition}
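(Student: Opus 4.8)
\textbf{Proof plan for Proposition \ref{conclaim}.}

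The plan is to unwind both conditions into statements about $\ell$-relations and recognize them as instances of the $\Diamond$/$\rhd$ machinery of Section \ref{sec:general}. Let $\mu$, $\mu'$ be the $\ell$-bijections $X \times X \to G$, $X' \times X' \to G$ giving the $G$-actions, and let $\mu \boxtimes \mu'$ be the product action on $X \times X'$ (Proposition \ref{productrelation}). First I would record what it means for $R \subset X \times X'$ to be a comodule morphism in terms of the associated map $\mu_R \colon R \times R \to G$: under the duality adjunction used to prove the object-level bijection, the linear map $R \colon \ell X \to \ell X'$ is a comodule morphism precisely when a certain square commutes, and I would translate this (using the elevators calculus as in the proof that $\rho$ corresponds to $\mu$) into an equation between $\ell$-relations built from $\mu$, $\mu'$ and $R$. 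I expect this to come out exactly as the $\Diamond(R)$-type equation, i.e. $\bigvee_{(y,b')\in R}\mu\langle a,y\rangle = \bigvee_{(a,x')\in R}\mu'\langle x',b'\rangle$ where $R$ is viewed as a span $X \leftarrow R \to X'$.

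For the other side, by the discussion following Proposition \ref{mono1} (and Propositions \ref{mono1}, \ref{mono2} as quoted just before the statement), $R \hookrightarrow X \times X'$ underlies a monomorphism of $G$-sets if and only if the restriction of $\mu \boxtimes \mu'$ to $R$ is again an $\ell$-bijection. Now Proposition \ref{combinacion} applies verbatim: with $\lambda = \mu$, $\lambda' = \mu'$ both $\ell$-bijections and $\theta$ the restriction of $\mu \boxtimes \mu'$ to $R \times R$, the cone $\theta$ is an $\ell$-bijection if and only if $\Diamond(R,R)$ holds. So the proof reduces to checking that the comodule-morphism condition is \emph{the same} equation as $\Diamond(R,R)$ for this particular $\theta$.

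Concretely the steps are: (i) write the comodule-morphism square for $R \colon \ell X \to \ell X'$ and convert it, via the self-duality of $\ell X$, $\ell X'$ and the formulas $\mu = (G \otimes \varepsilon)(\rho \otimes \ell X)$, into a linear-map identity $\ell R \otimes \ell X' \to G$; (ii) evaluate on generators to get it as an equality of suprema indexed over $R$; (iii) identify the two sides with the two sides of $\Diamond(R,R)$ where the span legs are the projections $R \to X$, $R \to X'$; (iv) invoke Proposition \ref{combinacion} to convert $\Diamond(R,R)$ into ``$\theta$ is an $\ell$-bijection''; (v) invoke Propositions \ref{mono1}–\ref{mono2} to convert the latter into ``$R \hookrightarrow X \times X'$ is a subobject in $\beta^G$'', and note that the linear map $R \colon \ell X \to \ell X'$ is then automatically compatible with the comodule structures because its associated $\ell$-relation is the restriction of $\mu \boxtimes \mu'$.

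The main obstacle I anticipate is bookkeeping in step (i)–(iii): one must be careful that the comodule-morphism condition for a \emph{relation} (not a function) $R$ unwinds symmetrically, producing exactly the two-sided $\Diamond$ equation rather than a one-sided $\rhd$ inequality, and that the indexing sets $\{(y,b') \in R\}$ and $\{(a,x') \in R\}$ match up with the spans $p' \circ p^{op}$ conventions of \ref{diagramadiamante12}. Once the dictionary is set up correctly, the equivalence is a direct citation of Proposition \ref{combinacion} together with the subobject description; no new computation beyond the elevators manipulation in step (i) should be required.
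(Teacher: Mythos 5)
Your proposal is correct and follows essentially the same route as the paper: the paper's proof also reduces the statement to the claim that the comodule-morphism diagram is equivalent to $\Diamond(R,R)$ for $\theta$ the restriction of the product action, proves that claim by an elevators-calculus computation using the triangular identities of the self-duality, and then concludes by Proposition \ref{combinacion} together with the description of subobjects in $\beta^G$ from Propositions \ref{mono1} and \ref{mono2}. The bookkeeping you flag in steps (i)--(iii) is precisely where the paper's proof spends its effort.
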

\begin{proof}
Let $\theta$ be the restriction of the product action $\mu \times \mu'$ to $R$. We claim that the diagram expressing that $R: \ell X \rightarrow \ell X'$ is a comodule morphism is equivalent to the diagram $\Diamond(R,R)$ in \ref{diagramadiamante12}. The proof follows then by 
\mbox{proposition \ref{combinacion}.}

 \vspace{1ex}
 
{\it proof of the claim}: 
It can be done by chasing elements in the diagrams, or more generally by using the elevators calculus explained in appendix \ref{ascensores}:

The comodule morphism diagram is the equality
\begin{equation} \label{commorph}
\xymatrix@C=-0.3pc@R=1.5pc
          {  
                   \ell X  \ar@2{-}[d] 
           & & &           \ar@{-}[dl] 
                           \ar@{}[d]|{\eta} 
                           \ar@{-}[dr] 
           & & &   \ell X  \ar@<4pt>@{-}'+<0pt,-6pt>[d]  
                           \ar@<-4pt>@{-}'+<0pt,-6pt>[d]^{R}	
           & & &           \ar@{-}[dl] 
                           \ar@{}[d]|{\eta} 
                           \ar@{-}[dr] 
           \\
				  \ell X   \ar@{-}[dr] 
		   &               \ar@{}[d]|{\mu_X } 
		   &      \ell X   \ar@{-}[dl] 
		   & &    \ell X   \ar@<4pt>@{-}'+<0pt,-6pt>[d] 
		                   \ar@<-4pt>@{-}'+<0pt,-6pt>[d]^{R}	
		   &      \hspace{3ex} = \hspace{3ex}
		   &	  \ell X'  \ar@{-}[dr] 
		   &               \ar@{}[d]|{\mu_{X'} } 
		   &      \ell X'  \ar@{-}[dl] 
		   & &    \ell X'  \ar@2{-}[d]
		   \\
		   &      G 
		   & & &  \ell X' 															   & & &  G 
		   & & &  \ell X', 
		  }
\end{equation}
while the diagram $\Diamond$ is
\begin{equation} \label{grafdiam}
\xymatrix@C=-0.3pc@R=1.5pc
         {  
                 \ell X  \ar@2{-}[d] 
          & & &          \ar@{-}[dl] 
                         \ar@{}[d]|{\eta} 
                         \ar@{-}[dr] 
          & & &  \ell X' \ar@2{-}[d]							
          & & 	 \ell X  \ar@<4pt>@{-}'+<0pt,-6pt>[d] 
                         \ar@<-4pt>@{-}'+<0pt,-6pt>[d]^{R}	
          & &    \ell X' \ar@2{-}[d]
          \\
				 \ell X  \ar@2{-}[d] 
	      & &    \ell X  \ar@2{-}[d] 
	      & &    \ell X  \ar@<4pt>@{-}'+<0pt,-6pt>[d] 
	                     \ar@<-4pt>@{-}'+<0pt,-6pt>[d]^{R}
	      & &    \ell X' \ar@2{-}[d] 	
	      &       \hspace{3ex} = \hspace{3ex}  
	      & 	 \ell X' \ar@{-}[dr] 
	      &              \ar@{}[d]|{\mu_{X'} } 
	      &      \ell X' \ar@{-}[dl]
	      \\
				 \ell X  \ar@{-}[dr] 
		  &              \ar@{}[d]|{\mu_X } 
		  &      \ell X  \ar@{-}[dl] 
		  & &    \ell X' \ar@{-}[dr] 
		  &              \ar@{}[d]|{\varepsilon} 
		  &      \ell X' \ar@{-}[dl] 		
		  & & &   G.
		  \\
		  &       G 
		  & & & & &	
		 }
\end{equation}
Recall that the triangular equations of a duality pairing are:
\begin{equation} \label{triangular}
\xymatrix@C=-0.3pc@R=0.1pc
         { 			
          &  \ar@{-}[ldd] \ar@{-}[rdd] \ar@{}[dd]|{\eta} 
          & & &  X \ar@2{-}[dd] 					
          & & & & & & & & &  Y \ar@2{-}[dd] 
          & & & \ar@{-}[ldd] \ar@{-}[rdd] \ar@{}[dd]|{\eta} 
          & & & &  
          \\
		  & & & & & & & X \ar@2{-}[dd] 
		  & & & & & & & & & & & & &  Y \ar@2{-}[dd] 
		  \\
		    X \ar@2{-}[dd] 
		  & & Y \ar@{-}[rdd] 
		  & \ar@{}[dd]|{\varepsilon} 
		  & X \ar@{-}[ldd] 
		  & \textcolor{white}{X} = \textcolor{white}{X} 
		  & & & & & \textcolor{white}{XX} \hbox{and} \textcolor{white}{XX} 
		  & & & Y \ar@{-}[ddr] 
		  & \ar@{}[dd]|{\varepsilon} 
		  & X \ar@{-}[ldd] 
		  & & Y \ar@2{-}[dd] 
		  & \textcolor{white}{X} = \textcolor{white}{X} 
		  & &  
		  \\
		  & & & & & & & X & & & & & & & & & & & & &  Y. 
		  \\
			X 
		  & & & & & & & & & & & & & & & & & Y	
		 }
\end{equation}

{\it Proof of \eqref{commorph} $\implies$ \eqref{grafdiam}}:
$$
\xymatrix@C=-0.3pc@R=1.5pc
          {
                   \ell X  \ar@2{-}[d] 
           & & &           \ar@{-}[ld] 
                           \ar@{-}[rd] 
                           \ar@{}[d]|{\eta} 
           & & &   \ell X' \ar@2{-}[d] 						
           & & 	   \ell X  \ar@2{-}[d] 
           & & &           \ar@{-}[ld] 
                           \ar@{-}[rd] 
                           \ar@{}[d]|{\eta} 
           & & &   \ell X' \ar@2{-}[d] 		
           & & 	   \ell X  \dr{R} 
           & & &           \ar@{-}[ld] 
                           \ar@{-}[rd] 
                           \ar@{}[d]|{\eta} 
           & & &   \ell X' \ar@2{-}[d] 					
           & & 
           \\
                   \ell X  \ar@2{-}[d] 
           & &     \ell X  \ar@2{-}[d] 
           & &     \ell X  \dr{R} 
           & &     \ell X' \ar@2{-}[d] 							
           & 
           \ \ \ 
                           \ar@{}[d]|= 
           &	   \ell X  \ar@{-}[dr] 
           &               \ar@{}[d]|{\mu_X} 
           &       \ell X  \ar@{-}[dl] 
           & &     \ell X  \dr{R} 
           & &     \ell X' \ar@2{-}[d] 	
           & 
           \ \ \ 
                           \ar@{}[d]|{\stackrel{(\ref{commorph})}{=}} 
           &	   \ell X' \ar@{-}[dr] 
           &               \ar@{}[d]|{\mu_{X'}} 
           &       \ell X' \ar@{-}[dl] 
           & &     \ell X' \ar@2{-}[d] 
           & &     \ell X' \ar@2{-}[d]	
           & 
           \ \ \ 
                   \ar@{}[d]|= 
           & 
           \\
                   \ell X  \ar@{-}[dr] 
           &               \ar@{}[d]|{\mu_X} 
           &       \ell X  \ar@{-}[dl] 
           & &     \ell X' \ar@{-}[dr] 
           &               \ar@{}[d]|{\varepsilon}
           &       \ell X' \ar@{-}[dl] 	
           & & &    G 
           & & &   \ell X' \ar@{-}[dr] 
           &               \ar@{}[d]|{\varepsilon} 
           &       \ell X' \ar@{-}[dl]				
           & & &    G 
           & & &   \ell X' \ar@{-}[dr] 
           &               \ar@{}[d]|{\varepsilon} 
           &       \ell X' \ar@{-}[dl] 						
           & & 
           \\
           &        G 
           & & & & & 																   & & & & & & & & & & & & & & & & & & 
          }
$$
$$\xymatrix@C=-0.3pc @R=1.5pc{
& &   			\ell X \dr{R} & & & \ar@{-}[ld] \ar@{-}[rd] \ar@{}[d]|{\eta} & & & \ell X' \ar@2{-}[d] 					& & 						\ell X \dr{R} & & \ell X' \ar@2{-}[d]					& \ \ \ \ar@{}[dd]|= &	\ell X \dr{R} & & \ell X' \ar@2{-}[d]\\
& \ \ \ \ar@{}[d]|= & 	\ell X' \ar@2{-}[d] & & \ell X' \ar@2{-}[d] & & \ell X' \ar@{-}[dr] & \ar@{}[d]|{\varepsilon} & \ell X' \ar@{-}[dl] 	& \ \ \ \ar@{}[d]|{\stackrel{(\triangle)}{=}} & \ell X' \ar@2{-}[d] & & \ell X' \ar@2{-}[d] 				& & 			\ell X' \ar@{-}[dr] & \ar@{}[d]|{\mu_{X'}} & \ell X' \ar@{-}[dl] \\
& &		 	\ell X' \ar@{-}[dr] & \ar@{}[d]|{\mu_{X'}} & \ell X' \ar@{-}[dl] & & & & 						& & 						\ell X' \ar@{-}[dr] & \ar@{}[d]|{\mu_{X'}} & \ell X' \ar@{-}[dl]	& & 			& G. & \\
& & 			& G & & & & &  													& & 						& G &  									& & 			& & }
$$

{\it Proof of \eqref{grafdiam} $\implies$ \eqref{commorph}}:		
$$\xymatrix@C=-0.2pc @R=1pc{
\X \dr{R} & & & \op{\eta} &	& & 		\X \dd & & & \op{\eta} &		& & 		\X \dd & & & & & & & \op{\eta} &				& & 	\\
\X' & & \X' & & \X' \dd		& \ig{} &	\X \dr{R} & & \X' \dd & & \X' \dd	& \ig{(\ref{grafdiam})} &	\X \dd & & & \op{\eta} & & & \X' \dd & & \X' \dd		& \ig{} & \\
& G \cl{\mu_{X'}}& & & \X'	& & 		\X' & & \X' & & \X' \dd			& & 		\X \dd & & \X \dd & & \X \dr{R} & & \X' \dd & & \X \dd 		& & 	\\
& & & & 			& & 		& G \cl{\mu_{X'}} & & & \X'		& & 		\X & & \X & & \X' & & \X' & & \X' \dd 				& & 	\\
& & & & 			& &		& & & & 				& & 		& G \cl{\mu_X} & & & & \cl{\varepsilon} & & & \X'		& & }$$

$$\xymatrix@C=-0.2pc @R=1pc{
& 	& 	\X \dd & & & \op{\eta} & & & & & 			& & 			\X \dd & & & \op{\eta} & 		& & 		\X \dd & & & \op{\eta} & 	& & \\
& \ig{} & 	\X \dd & & \X \dd & & \X \dr{R} & & & & 		& \ig{(\triangle)} & 	\X \dd & & \X \dd & & \X \dr{R} 	& \ig{} &	\X & & \X & & \X \dr{R}		& & \\
& & 		\X & & \X & & \X' \dd & & & \op{\eta} & 		& & 			\X & & \X & & \X' \dd			& &		& G \cl{\mu_X} & & & \X'.	& & \\
& & 		& G \cl{\mu_X} & & & \X' & & \X' & & \X' \dd 	 	& & 			& G \cl{\mu_X} & & & \X' 		\\
& & 		& & & & & \cl{\varepsilon} & & & \X' }$$

\end{proof}

\section{The Galois and the Tannakian contexts}

{\bf The Galois context.}

\begin{sinnadastandard} {\bf The localic group of automorphisms of a functor.}  \label{Aut(F)}
\end{sinnadastandard}
Let $\cc{E}ns \mr{f} \cc{E}$ be any pointed topos, with inverse image \mbox{$f^* = F$, $\cc{E} \mr{F} \cc{E}ns$.} 
The localic group of automorphisms of $F$ is defined to be the universal  $\rhd$-cone of $\ell$-bijections in the category of locales, as described in the following diagram (see \cite{D1}):
\begin{equation} \label{AutF}
\xymatrix
        {
         FX \times FX \ar[rd]^{\lambda_{X}}  
                      \ar@(r, ul)[rrd]^{\phi_{X}} 
                      \ar[dd]_{F(f) \times F(f)}  
        \\
         {} \ar@{}[r]^(.3){\geq}
         & \;\; Aut(F) \;\; \ar@{-->}[r]^{\phi} 
         & \;H.  
        \\
         FY \times FY \ar[ru]^{\lambda_{Y}} 
         \ar@(r, dl)[rru]^{\phi_{Y}} 
         && {(\phi \; \text{a locale morphism})}
        } 
\end{equation}
From propositions \ref{trianguloesdiamante} and \ref{extension} it immediately follows
\begin{proposition}
The localic group $Aut(F)$ exists and it is isomorphic to the localic group of automorphisms of the restriction of $F$ to any small site of definition for $\cc{E}$. \cqd
\end{proposition}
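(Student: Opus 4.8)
The plan is to read off both assertions directly from the machinery already assembled in Sections 2–4. For the \emph{existence} of $Aut(F)$: the defining universal property \eqref{AutF} asks for the universal object receiving a $\rhd$-cone of $\ell$-bijections, and by Proposition~\ref{trianguloesdiamante} any such $\rhd$-cone of $\ell$-bijections valued in a locale is automatically a $\Diamond$-cone of $\ell$-bijections. So it suffices to build the universal $\Diamond$-cone of $\ell$-bijections in locales. First I would fix a small site of definition $\cc{C} \subset \cc{E}$ and, for each $C \in \cc{C}$, take the locale $Aut(FC)$ together with the universal $\ell$-bijection $FC \times FC \mr{\lambda_C} Aut(FC)$ described in Section~\ref{Cmd_0} (the free locale on $FC \times FC$ followed by forcing the four axioms of \ref{bijection}). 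Amalgamating these over $C \in \cc{C}$ — i.e.\ taking the appropriate quotient of the coproduct of the $Aut(FC)$ in the category of locales that forces the $\Diamond_1(f)$ and $\Diamond_2(f)$ relations for all arrows $f$ of $\cc{C}$ — produces a locale $H_0$ carrying a $\Diamond$-cone over $\cc{C}$; by part~2) of Proposition~\ref{extension} the components are again $\ell$-bijections. This is the universal such cone over $\cc{C}$ essentially by construction.

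Next I would invoke Proposition~\ref{extension}, part~1), to extend this $\Diamond_1$- and $\Diamond_2$-cone (equivalently, $\Diamond$-cone) uniquely from $\cc{C}$ to all of $\cc{E}$, and part~2) again to see the extended components $\lambda_X$ remain $\ell$-bijections since $H_0$ is a locale. This gives a $\Diamond$-cone of $\ell$-bijections $FX \times FX \mr{\lambda_X} H_0$ over $\cc{E}$; I would then set $Aut(F) := H_0$. To check the universal property of \eqref{AutF}, suppose $FX \times FX \mr{\phi_X} H$ is any $\rhd$-cone of $\ell$-bijections valued in a locale $H$. Restricting to $\cc{C}$ and using the universal property of each $Aut(FC)$ (as the universal $\ell$-bijection in locales) yields locale morphisms $Aut(FC) \to H$ compatible with the $\cc{C}$-structure; by the universal property of the amalgamation these assemble into a unique locale morphism $\phi: Aut(F) \to H$ with $\phi \lambda_C = \phi_C$. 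Finally, since both the $\lambda_X$ and the $\phi_X$ are the \emph{unique} extensions of their restrictions to $\cc{C}$ (formulas (1)/(2) in the proof of Proposition~\ref{extension}) and $\phi$ preserves suprema, the equation $\phi \lambda_X = \phi_X$ propagates from $\cc{C}$ to all $X \in \cc{E}$. The Hopf algebra / localic group structure on $Aut(F)$ is then induced exactly as in Section~\ref{Cmd_0}: the maps $w$, $e$, $\iota$ of \eqref{algebrastructure} on each $Aut(FC)$ are $\ell$-bijections, hence glue to locale morphisms on $Aut(F)$ satisfying the Hopf axioms.

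For the \emph{second} assertion — that $Aut(F)$ is isomorphic to the localic group of automorphisms of the restriction $F|_{\cc{C}}$ — the point is that the construction above was carried out entirely over a small site: $H_0$ \emph{is}, by definition, the universal $\rhd$-cone of $\ell$-bijections over $\cc{C}$, which is exactly $Aut(F|_{\cc{C}})$. The content of the claim is then that this same locale, equipped with its extended cone over $\cc{E}$, also satisfies the universal property defining $Aut(F)$ — which is precisely what the previous paragraph established. A complementary remark: the isomorphism is independent of the chosen site, since any two small sites of definition admit a common refinement, and the extension/restriction operations of Proposition~\ref{extension} are mutually inverse up to canonical isomorphism.

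I expect the main obstacle to be purely bookkeeping rather than conceptual: namely, making precise the ``amalgamation of the $Aut(FC)$ over $\cc{C}$'' — i.e.\ exhibiting it as a concrete colimit in $Loc$ (equivalently a limit in $Sp$, equivalently a coproduct of locales quotiented by the relations forcing the $\Diamond_1,\Diamond_2$ diagrams of the arrows of $\cc{C}$) — and verifying that forcing these relations does not destroy the $\ell$-bijection axioms, which is where part~2) of Proposition~\ref{extension} does the real work. Everything else (the Hopf structure, independence of the site, propagation of $\phi\lambda_X = \phi_X$ from $\cc{C}$ to $\cc{E}$) follows formally from the uniqueness clauses already proved in Proposition~\ref{extension} together with the universal properties of the free locale and of $Aut(FC)$.
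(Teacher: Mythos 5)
Your proof is correct and takes essentially the same route the paper intends: the paper derives this proposition directly from Propositions \ref{trianguloesdiamante} and \ref{extension}, exactly as you do, by building the universal cone over a small site by generators and relations and then extending it uniquely to all of $\cc{E}$. (One small misattribution: that the components of $H_0$ over $\cc{C}$ are $\ell$-bijections follows not from Proposition \ref{extension}(2) --- which concerns the passage from $\cc{C}$ to $\cc{E}$ --- but from the observation after \ref{bijection} that locale morphisms preserve the four axioms; this is harmless.)
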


A point $Aut(F) \mr{\phi} 2$ 
corresponds exactly to the data defining a natural isomorphism of $F$.  
Given $(a,\,b) \in FX \times FX$, 
we will denote \mbox{$\langle X, \,a|b \rangle  = \lambda_X(a,\,b)$.} This element of $Aut(F)$ corresponds to the open set  $\{\phi \,|\, \phi_X(a) = b\}$ of the subbase for the product topology in the set of natural isomorphisms of $F$. For details of the construction of this locale see \cite{D1}.

\vspace{1ex}
 
The $\ell$-bijections $\lambda_X$ determine morphisms of locales 
\mbox{$Aut(FX) \mr{\mu_X} Aut(F)$,} 
$\mu_X \langle a|b \rangle = \langle X, \,a|b \rangle$.
It is straightforward to check that 
the following three families of arrows are $\rhd$-cones of  
$\ell$-bijections:
\begin{multline}  \label{groupAut}
FX \times FX \mr{w_X} Aut(F) \otimes Aut(F),\;\; 
w_X(a,\,b) = \displaystyle\bigvee_{x \in FX} 
\langle X, \,a|x \rangle \otimes \langle X, \,x|b \rangle,  
\\
\hspace{-26ex} FX \times FX \mr{\iota_X} Aut(F), \;\;
\iota_X(a,\,b) = \langle X, \,b|a \rangle,
\\
\hspace{-36ex} FX \times FX \mr{e_X} 2, \;\; e_X(a,\,b) = \delta_{a=b}.
\\
\end{multline}
By the universal property they determine localic morphisms with domain $Aut(F)$ which define a localic group structure on $Aut(F)$, such that 
$\mu_X$ becomes an action of $Aut(F)$ on $FX$, and such that for any $X \mr{f} Y \in \cc{E}$, $F(f)$ is a morphism of actions. 
In this way there is a lifting $\tilde{F}$  of the functor $F$ into $\beta^G$,  
$\cc{E} \mr{\tilde{F}} \beta^G$, for $G = Aut(F)$.

 \begin{sinnadastandard} {\bf The (Neutral) Tannakian context associated to pointed topos.} \label{tannakacontext}

For generalities, notation and terminology  concerning Tannaka theory see appendix \ref{appendix}. We consider a  topos with a point $\cc{E}ns \mr{f} \cc{E}$, with inverse image $f^* = F$, $\cc{E} \mr{F} \cc{E}ns$. 
We have a diagram (see \ref{rel}):
$$
\xymatrix
        {
           \cc{E} \ar[r] \ar[d]_F
         & \cc{R}el(\cc{E}) \ar[d]^{\cc{R}el(F)}
         \\
           \cc{E}ns \ar[r] 
         & \cc{R}el
         & \hspace{-7ex} = Sup_0 
        }
$$
This determines a Tannakian context as in appendix \ref{appendix}, with 
$\cc{X} = \cc{R}el(\cc{E})$, $\cc{V} = Sup$, $\cc{V}_0 = \cc{R}el = Sup_0$ and $T = \cc{R}el(F)$. Furthermore, in this case $\cc{X}$, $\Vat$ are symmetric, $T$ is monoidal (\ref{tensoriso}, \ref{rel}),
and every object of $\cc{X}$ has a right dual. Thus, the (large) coend $End^\lor(T)$ (which exists, as we shall see) is a (commutative) Hopf algebra (proposition \ref{hopf}). 
\end{sinnadastandard}

The universal property which defines the coend $End^\lor(T)$ is that of a universal $\Diamond$-cone in the category of sup-lattices, as described in the following diagram: 
$$
\xymatrix
        {
         & TX \times TX \ar[rd]^{\lambda_{X}}  
                      \ar@(r, ul)[rrd]^{\phi_{X}}  
        \\
         TX \times TY \ar[rd]_{TR \times TY} 
			           \ar[ru]^{TX \times TR^{op}} 
	     & \equiv 
	     & \;\;End^\lor(T)\;\; \ar@{-->}[r]^{\phi}  
         & \;H.  
        \\
         & TY \times TY \ar[ru]^{\lambda_{Y}} 
                          \ar@(r, dl)[rru]^{\phi_{Y}} 
         && {(\phi \; \text{a linear map})}
        } 
$$

Given $(a,\,b) \in TX \!\times TX$, 
we will denote \mbox{$[X, \,a,b] = \lambda_X \langle a,\,b \rangle$.} 

From proposition \ref{extension} and \ref{dim1ydim2esdim} it immediately follows:
\begin{proposition} \label{EndhasX}
The large coend defining $End^\lor(T)$ exists and can be computed by the coend corresponding to the restriction of $\, T$ to the full subcategory of $Rel(\Eat)$ whose objects are in any small site $\Cat$ of definition of $\cc{E}$. \cqd
\end{proposition}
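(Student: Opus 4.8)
The plan is to use the universal property recalled just above: $End^\lor(T)$ is by definition the vertex of the universal $\Diamond$-cone $TX \times TX \mr{\lambda_X} H$, indexed by the objects of $\cc{R}el(\cc{E})$ (equivalently, of $\cc{E}$), with one equation $\Diamond(R)$ for each relation $R$ of $\cc{E}$. Fix a small site of definition $\cc{C} \subset \cc{E}$ and let $\cc{A}$ be the full subcategory of $\cc{R}el(\cc{E})$ on the objects of $\cc{C}$. Then $\cc{A}$ is small: its objects form a set since $\cc{C}$ is small, and its hom-sets are sets since a topos is well-powered (a morphism of $\cc{A}$ is a subobject of a product in $\cc{E}$). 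Hence the coend of the restriction $T|_{\cc{A}}$ exists in the cocomplete category $Sup$, and its universal property is again that of a universal $\Diamond$-cone, now over $\cc{A}$. So it suffices to show that for every $H \in Sup$, restriction along $\cc{A} \hookrightarrow \cc{R}el(\cc{E})$ is a bijection between $\Diamond$-cones over $\cc{R}el(\cc{E})$ with vertex $H$ and $\Diamond$-cones over $\cc{A}$ with vertex $H$; the two universal objects will then coincide, giving at once the existence of the large coend and the stated formula (and its independence from the choice of site).

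Restriction is clearly well defined, so the work is to invert it. Given a $\Diamond$-cone $(\lambda_C)_{C \in \cc{C}}$ over $\cc{A}$, I would first specialize $\Diamond(R)$ to $R$ the graph of an arrow $f$ of $\cc{C}$, resp. the opposite graph, and conclude — as noted right after \ref{diagramadiamante12} — that $(\lambda_C)$ is simultaneously a $\Diamond_1$-cone and a $\Diamond_2$-cone over $\cc{C}$. By Proposition \ref{extension} each of these extends uniquely to a $\Diamond_1$-cone $(\lambda^1_X)$ and to a $\Diamond_2$-cone $(\lambda^2_X)$ over $\cc{E}$; moreover, by the computation closing the proof of \ref{extension} — the one showing that the two defining formulas there agree precisely when the site data forms both a $\Diamond_1$- and a $\Diamond_2$-cone — one gets $\lambda^1_X = \lambda^2_X =: \lambda_X$. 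So $(\lambda_X)_{X \in \cc{E}}$ is at once a $\Diamond_1$- and a $\Diamond_2$-cone over $\cc{E}$, hence a $\Diamond$-cone over $\cc{R}el(\cc{E})$ by \ref{dim1ydim2esdim}, and it restricts to $(\lambda_C)$. Uniqueness is immediate: any $\Diamond$-cone over $\cc{R}el(\cc{E})$ restricting to $(\lambda_C)$ is in particular a $\Diamond_1$-cone over $\cc{E}$ extending $(\lambda_C)$, hence equals $(\lambda_X)$ by the uniqueness in \ref{extension}.

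This gives the desired bijection, natural in $H$, so the small coend of $T|_{\cc{A}}$ is a universal $\Diamond$-cone over $\cc{R}el(\cc{E})$; thus $End^\lor(T)$ exists and is computed by that coend. I expect essentially no obstacle here: the one delicate point is the compatibility of the two extensions — that a cone already satisfying the full $\Diamond$ relations on the site does not split into two competing extensions off the site — and this is exactly what is checked inside the proof of \ref{extension}; the rest is formal, using only that $Sup$ is cocomplete and $\cc{E}$ well-powered. This is the precise Tannakian counterpart of the earlier statement that $Aut(F)$ may be computed from any small site of definition, with $\rhd$-cones of $\ell$-bijections there in the role of $\Diamond$-cones here.
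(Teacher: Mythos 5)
Your proof is correct and is precisely the argument the paper intends: the paper dispatches this proposition with ``from Propositions \ref{extension} and \ref{dim1ydim2esdim} it immediately follows'', and your write-up fills in exactly those steps (universal $\Diamond$-cone reading of the coend, smallness of the restricted index category plus cocompleteness of $Sup$ for existence, unique extension of $\Diamond_1$- and $\Diamond_2$-cones from the site and their agreement, and \ref{dim1ydim2esdim} to recover the full $\Diamond$-cone condition).
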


By the general Tannaka theory we know that the sup-lattice $End^\lor(T)$ is a Hopf algebra in $Sup$. The description of the multiplication $m$ and a unit $u$ given below proposition \ref{bialg} yields in this case, for $X, \, Y \in \cc{X}$ (here, $F(1_\Cat) = 1_{\Ens} = \{*\}$):
\begin{equation} 
m([X,\, a,a'],\, [Y, \,b,b']) \;= \; [X \times Y,\, (a,\,b),(a',\,b')],\; \;\; u(1) \;=\; [1_\Cat, \, *,*] .
\end{equation} 
This shows that $TX \times TX \mr{\lambda_X} End^\lor(T)$ is a compatible 
$\Diamond$-cone, thus by proposition \ref{compislocale} it follows that $End^\lor(T)$ is a locale,  with top element $[1_\Cat, \, *,*]$ and infimum 
$[X,\, a,a'] \wedge [Y, \,b,b'] =  [X \times Y,\, (a,\,b),(a',\,b')]$.

\vspace{1ex}

We let the reader check the following:
\begin{sinnadaitalica} \label{groupEnd}
The descriptions in the general Tannaka theory of the comultiplication $w$, the counit $\varepsilon$ and the antipode $\iota$ (see appendix \ref{appendix}) yield in this case the formulas 
\mbox{$w_X(a,\,b) = \displaystyle\bigvee_{x \in FX} 
[X, \,a,x] \otimes [X, \,x,b]$,} 
$\iota_X(a,\,b) = [X, \,b,a]$, \mbox{and 
$\varepsilon_X(a,\,b) = \delta_{a=b}$.} \cqd 
\end{sinnadaitalica} 

\vspace{1ex}

\begin{sinnadastandard} 
{\bf The isomorphism $End^\lor(T) \cong Aut(F)$.}
\end{sinnadastandard} 

From propositions  \ref{trianguloesdiamante} and  \ref{Diamondisbijection} it immediately follows (recall that T = F on $\cc{E}$) that $TX \times TX \mr{\lambda_X} Aut(F)$ and 
$TX \times TX \mr{\lambda_X} End^\lor(T)$ are both $\rhd$-cones and $\Diamond$-cones of $\ell$-bijections. From  proposition \ref{supisloc} and the respective universal properties it follows that they are isomorphic locales respecting the cone maps $\lambda_X$. Furthermore, by the formulas in \ref{groupAut} and \ref{groupEnd} we see that under this isomorphism the comultiplication, counit and antipode correspond. Thus, we have: 
\begin{theorem} \label{G=H}
Given any pointed topos, there is a unique isomorphism of localic groups $End^\lor(T) \cong Aut(F)$ commuting with the $\lambda_X$. \cqd  
\end{theorem}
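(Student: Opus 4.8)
The plan is to realize both $Aut(F)$ and $End^\lor(T)$ as solutions of (essentially) the same universal problem, so that the isomorphism drops out of the uniqueness clauses of the two universal properties, and then to match the Hopf algebra structures on generators.

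First I would record that, since $T=F$ on $\cc{E}$, each of the two locales carries a distinguished cone $TX \times TX \mr{\lambda_X} (-)$ which is \emph{simultaneously} a $\rhd$-cone and a $\Diamond$-cone of $\ell$-bijections, and whose values $\lambda_X\langle a,a'\rangle$ generate the locale (as suprema of finite infima). For $Aut(F)$ this is built into the definition as the universal $\rhd$-cone of $\ell$-bijections in $Loc$ (diagram \eqref{AutF}), upgraded to a $\Diamond$-cone by Proposition \ref{trianguloesdiamante}. For $End^\lor(T)$ it is exactly the discussion following Proposition \ref{EndhasX}: the coend cone is compatible, so by Proposition \ref{compislocale} the sup-lattice $End^\lor(T)$ is a locale, and by Proposition \ref{Diamondisbijection} its cone is then a $\Diamond$-cone of $\ell$-bijections; Proposition \ref{dim1ydim2esdim} together with Proposition \ref{diamondimpliesrhd} make it a $\rhd$-cone as well.

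Next I would build the two comparison maps. Since the $End^\lor(T)$-cone is a $\rhd$-cone of $\ell$-bijections valued in a locale, the universal property \eqref{AutF} of $Aut(F)$ furnishes a locale morphism $Aut(F) \to End^\lor(T)$ commuting with the $\lambda_X$. Conversely, the $Aut(F)$-cone is a $\Diamond$-cone valued in a sup-lattice, so the universal property of the coend $End^\lor(T)$ furnishes a linear map $End^\lor(T) \to Aut(F)$ commuting with the $\lambda_X$, and by Proposition \ref{supisloc} this linear map automatically preserves $\wedge$ and $1$, hence is a locale morphism. The two composites $Aut(F) \to Aut(F)$ and $End^\lor(T) \to End^\lor(T)$ are locale morphisms commuting with the $\lambda_X$, so by the uniqueness part of each universal property (applied to the cone into itself) both are identities; thus $Aut(F) \cong End^\lor(T)$ as locales via a unique isomorphism respecting the $\lambda_X$, and uniqueness in the statement is immediate because the $\lambda_X\langle a,a'\rangle$ generate.

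Finally I would check that this isomorphism transports the Hopf algebra structure. On either side the comultiplication, counit and antipode are the unique locale morphisms induced, via the same universal properties, by the $\rhd$-cones $w_X$, $e_X$, $\iota_X$; comparing the explicit formulas \ref{groupAut} for $Aut(F)$ with \ref{groupEnd} for $End^\lor(T)$, they agree verbatim on the generators under the correspondence $\langle X,a|b\rangle \leftrightarrow [X,a,b]$, so the isomorphism intertwines $w$ with $w$, $e$ with $e$ and $\iota$ with $\iota$, i.e.\ it is an isomorphism of localic groups. The only genuinely delicate point in the argument is the bookkeeping between the two \emph{different} flavours of universal property — $Aut(F)$ universal among $\rhd$-cones of $\ell$-bijections in $Loc$, $End^\lor(T)$ universal among $\Diamond$-cones in $Sup$ — and verifying (through Propositions \ref{trianguloesdiamante}, \ref{Diamondisbijection}, \ref{dim1ydim2esdim}, \ref{supisloc}) that each cone qualifies as a legitimate input for the other's universal property; once that is in place the rest is formal.
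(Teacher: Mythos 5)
Your proposal is correct and follows essentially the same route as the paper: both cones are shown to be simultaneously $\rhd$-cones and $\Diamond$-cones of $\ell$-bijections, the two comparison maps come from the two universal properties (with Proposition \ref{supisloc} upgrading the coend-induced linear map to a locale morphism), and the Hopf structures are matched on generators via the formulas in \ref{groupAut} and \ref{groupEnd}. You have merely unpacked the paper's terse appeal to ``the respective universal properties'' into the explicit pair of morphisms and the identity-composite argument, which is exactly what the paper intends.
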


\section{The main Theorems}
A pointed topos $\cc{E}ns \mr{f} \cc{E}$, with inverse image $f^* = F$, $\cc{E} \mr{F} \cc{E}ns$, determines a situation described in the following \mbox{diagram:}
$$
\xymatrix
        {
          \beta^G            \ar[r] \ar[rdd]
        & \cc{R}el(\beta^G)  \ar[r]^{=}
        & Cmd_0(G)          \ar[r]^{=}
        & Cmd_0(H)          \ar[ldd]
        \\
        & \cc{E}             \ar[d]^F \ar[r] \ar[lu]_{\widetilde{F}}
        & \cc{R}el(\cc{E})   \ar[ul]_{\cc{R}el(\widetilde{F})}  
                             \ar[d]^T \ar[ur]^{\widetilde{T}}
        \\
        & \cc{E}ns            \ar[r]
        & \cc{R}el 
        &  \hspace{-8ex} = Sup_0 \subset Sup.
       }
$$

\vspace{1ex}

\noindent
where $G = Aut(F)$, $T = \cc{R}el(F)$, $H = End^\lor(T)$ and
the two isomorphisms in the first row of the diagram are given by Theorems \ref{Comd=Rel} and \ref{G=H}. 

\begin{theorem} \label{AA}
The (Galois) lifting functor $\widetilde{F}$ is an equivalence if and only if the (Tannaka) lifting functor $\widetilde{T}$ is such. \cqd
\end{theorem}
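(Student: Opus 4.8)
The plan is to reduce the statement to a diagram chase using the isomorphisms already established. Recall the diagram just above the statement: we have the Galois lifting $\widetilde{F}\colon \cc{E} \to \beta^G$ and the Tannaka lifting $\widetilde{T}\colon \cc{R}el(\cc{E}) \to Cmd_0(H)$, connected by the horizontal isomorphisms $\cc{R}el(\beta^G) \cong Cmd_0(G)$ (Theorem \ref{Comd=Rel}) and $Cmd_0(G) \cong Cmd_0(H)$ (induced by $G = Aut(F) \cong End^\lor(T) = H$, Theorem \ref{G=H}). The key point is that this whole diagram commutes: applying $\cc{R}el(-)$ to the Galois lifting and composing with the isomorphism $\cc{R}el(\beta^G) \cong Cmd_0(G) \cong Cmd_0(H)$ yields the Tannaka lifting $\widetilde{T}$. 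Once that commutativity is in hand, the equivalence of $\widetilde{F}$ and $\widetilde{T}$ follows formally.

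First I would verify the commutativity $\cc{R}el(\widetilde{F})$ followed by the horizontal isomorphisms equals $\widetilde{T}$. On objects this is essentially immediate: both $\widetilde{F}$ and $\widetilde{T}$ are liftings of $F$ (resp. $T = \cc{R}el(F)$), the comodule structure on $\ell(FX)$ coming from $H = End^\lor(T) \cong Aut(F) = G$ is, under the correspondence of Proposition \ref{conclaim} and the bijection between comodule structures $\rho$ and $\ell$-bijections $\mu$, exactly the action $\mu_X$ defining $\widetilde{F}(X)$, because the cone maps $\lambda_X$ are carried to each other by the isomorphism $H \cong G$ (this is precisely the ``commuting with the $\lambda_X$'' clause of Theorem \ref{G=H}). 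On morphisms, a relation $R \subset X \times X'$ in $\cc{R}el(\cc{E})$ is sent by $\cc{R}el(\widetilde{F})$ to $FR \subset FX \times FX'$ viewed as a morphism in $\cc{R}el(\beta^G)$, which under Theorem \ref{Comd=Rel} is the linear map $\ell(FX) \to \ell(FX')$ that is a comodule morphism; this is the same linear map underlying $\widetilde{T}(R)$, so the triangle commutes.

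Then I would invoke the following elementary categorical fact: given a commutative square (triangle) of functors in which the ``horizontal'' functors are isomorphisms (equivalences) of categories, the two ``vertical'' functors are equivalences simultaneously. Concretely, $\widetilde{T} = \Phi \circ \cc{R}el(\widetilde{F})$ where $\Phi\colon \cc{R}el(\beta^G) \xrightarrow{\sim} Cmd_0(H)$ is the composite isomorphism; hence $\cc{R}el(\widetilde{F})$ is an equivalence iff $\widetilde{T}$ is. Finally, by the remark \ref{FequivalenceiffRel(F)so} (applied to $\widetilde{F}$ in place of $F$, using that $\cc{R}el(\beta^G) = \cc{R}el(\cc{E})$-style constructions behave well and that $\cc{R}el(\widetilde{F})$ is the relational extension of $\widetilde{F}$), $\widetilde{F}$ is an equivalence iff $\cc{R}el(\widetilde{F})$ is. Chaining these two equivalences gives the theorem.

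The main obstacle I anticipate is the careful bookkeeping in the commutativity verification — specifically, checking that the comodule structure transported from $H = End^\lor(T)$ through the chain of identifications agrees on the nose with the $\beta^G$-action used to define $\widetilde{F}$, rather than merely up to some unchecked natural isomorphism. This is where one must use precisely that all the relevant identifications (Theorems \ref{Comd=Rel} and \ref{G=H}, and the $\rho \leftrightarrow \mu$ bijection) are compatible with the universal cones $\lambda_X$, so that there is no ambiguity. The application of \ref{FequivalenceiffRel(F)so} to $\widetilde F$ also deserves a word, since that remark was stated for inverse images of points; but $\cc{E} \to \cc{R}el(\cc{E})$ is faithful, identity on objects, with $\cc{R}el(\widetilde F)$ the canonical extension, and the argument for ``$F$ equivalence iff $\cc{R}el(F)$ equivalence'' is purely formal (an equivalence of regular categories induces an equivalence on categories of relations, and conversely one recovers $\cc{E}$ inside $\cc{R}el(\cc{E})$ as the maps), so it applies verbatim.
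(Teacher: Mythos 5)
Your proposal is correct and follows essentially the same route as the paper, which leaves the proof implicit in the commutative diagram preceding the statement: identify $\widetilde{T}$ with $\cc{R}el(\widetilde{F})$ via the isomorphisms of Theorems \ref{Comd=Rel} and \ref{G=H}, and combine with the fact (as in \ref{FequivalenceiffRel(F)so}) that a functor is an equivalence iff its relational extension is. Your write-up simply makes explicit the compatibility checks with the cones $\lambda_X$ that the paper takes for granted.
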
 
 Assume now that $\cc{E}$ is a connected atomic topos. The full subcategory of connected objects $\cc{C} \subset \cc{E}$ furnished with the canonical topology is a small site for $\cc{E}$. In \cite{D1} it is proved that the diagram of the functor $F$ restricted to this site $\cc{C} \mr{F} \cc{E}ns$ is a poset (\emph{This fact distinguishes atomic topoi from general locally connected topoi}),   
an explicit construction of $Aut(F)$ is given, and the following key result of localic Galois Theory is proved:
\begin{theorem}[\cite{D1} 6.9, 6.11] \label{keyLGT} ${}$

\vspace{1ex}

1) For any $C \in \cc{C}$ and $(a,\,b) \in FC \times FC$, 
$\; \langle C, \,a|b \rangle \neq 0$. 

\vspace{1ex}

2) Given any other $(a',\,b') \in FC' \times FC'$, if 
$\langle C, \,a|b \rangle \leq \langle C', \,a'|b' \rangle$, then there exists $C \mr{f} C'$ in $\cc{C}$ such that $a' = F(f)(a)$, $b' = F(f)(b)$.
\end{theorem}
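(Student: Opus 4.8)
The plan is to first pin down the concrete anatomy of the locale $Aut(F)$ in the connected atomic case and then read off part~1, deducing part~2 formally from part~1 together with the properties of the universal $\rhd$-cone $\lambda_X$ and the fact that atomic topoi are Boolean. The preliminary step --- the substance of \cite[\S 6]{D1} --- is the following description. Let $\cc{C}\subset\cc{E}$ be the site of connected objects; by the fact quoted just before the statement the category of elements $\cc{D}$ of $F|_{\cc{C}}$ is a cofiltered poset, with objects the pairs $(C,a)$ ($C\in\cc{C}$, $a\in FC$) and a unique arrow $(C,a)\to(C',a')$ whenever some $f\colon C\to C'$ in $\cc{C}$ satisfies $F(f)(a)=a'$. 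Two further features of atomic topoi enter: every morphism between connected objects is an epimorphism (a non-initial subobject of a connected object of a Boolean topos is the whole object), so every $F(f)$ is surjective; and, together with the cofilteredness of $\cc{D}$, this yields an amalgamation (right Ore) condition on the poset $\cc{Q}$ of pairs $\bigl((C,a),(C,b)\bigr)$ ($C\in\cc{C}$, $a,b\in FC$), ordered by declaring $\bigl((C,a),(C,b)\bigr)\le\bigl((C',a'),(C',b')\bigr)$ iff some $f\colon C\to C'$ has $F(f)(a)=a'$ and $F(f)(b)=b'$. One then checks that the presentation of $\mathcal{O}\bigl(Aut(F)\bigr)$ by the generators $\langle C,\,a|b\rangle$, subject to the naturality inequalities, the four axioms of \ref{bijection}, and the compatibility equations of \ref{comp}, is exactly the frame of sheaves for the atomic topology on $\cc{Q}$ (a genuine Grothendieck topology by the amalgamation condition), with $\langle C,\,a|b\rangle$ corresponding to the sheafification of the principal sieve on $\bigl((C,a),(C,b)\bigr)$.

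Granting this, part~1 is immediate: in the atomic topology the empty sieve covers no object (every inhabited sieve covers, and the one-element sieve on any object is inhabited), so the sheafification of a principal sieve is never $0$, i.e.\ $\langle C,\,a|b\rangle\neq 0$ for every $C\in\cc{C}$ and $a,b\in FC$. Equivalently, the preliminary step can be repackaged as constructing a point of the open sublocale $\langle C,\,a|b\rangle$ --- a natural automorphism $\sigma$ of $F$ with $\sigma_C(a)=b$ --- by a possibly transfinite back-and-forth recursion along $\cc{D}$ in which amalgamation guarantees that each partial compatible family of bijections $F(D)\to F(D)$ extends; this is closer to how \cite{D1} presents it.

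For part~2, suppose $\langle C,\,a|b\rangle\le\langle C',\,a'|b'\rangle$. Since $Aut(F)$ is a $\rhd$-cone of $\ell$-bijections with values in a locale it is a $\Diamond$-cone of $\ell$-bijections (\ref{trianguloesdiamante}), hence a compatible cone (\ref{Diamondisbijection}); so by \ref{comp}, $\langle C,\,a|b\rangle=\langle C,\,a|b\rangle\wedge\langle C',\,a'|b'\rangle=\langle C\times C',\,(a,a')|(b,b')\rangle$. Write $C\times C'=\coprod_i E_i$ as a coproduct of connected objects and let $E$ be the summand with $(a,a')\in FE$; set $e=(a,a')$. If $(b,b')\notin FE$ then $\langle C\times C',\,(a,a')|(b,b')\rangle=0$ by the same-versus-different-component dichotomy for $\lambda$ recorded after \ref{extension}, contradicting part~1 for $C$; hence $e':=(b,b')\in FE$ and $\langle C,\,a|b\rangle=\langle E,\,e|e'\rangle$. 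Put $p=\pi_1|_E\colon E\to C$, $q=\pi_2|_E\colon E\to C'$; these are epimorphisms of connected objects, $E\in\cc{C}$, and $F(p)(e)=a$, $F(p)(e')=b$, $F(q)(e)=a'$, $F(q)(e')=b'$. The $\Diamond_1(p)$ equation (valid because $Aut(F)$ is a $\Diamond$-cone, by \ref{trianguloesdiamante} and \ref{dim1ydim2esdim}) reads $\langle C,\,a|b\rangle=\bigvee_{F(p)(y)=b}\langle E,\,e|y\rangle$, a supremum of pairwise orthogonal terms by axiom $uv)$ for $\lambda_E$; since it equals its own term $\langle E,\,e|e'\rangle$, every other term is $\le\langle E,\,e|e'\rangle$ and orthogonal to it, hence $0$, so part~1 for $E$ forces $e'$ to be the unique $y\in FE$ with $F(p)(y)=b$. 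Thus $F(p)$ has a singleton fibre over $b$. Applying the left-exact $F$ to the complemented decomposition $E\times_C E=\Delta_E\sqcup K$ (Booleanness) gives $FK=\{(y_1,y_2)\in FE\times FE \mid F(p)(y_1)=F(p)(y_2),\ y_1\neq y_2\}$, which has empty fibre over $b$; so $K\to C$ is not an epimorphism, hence its image is a proper complemented subobject of the connected $C$, hence $0$, hence $K=0$; therefore $\Delta_E\cong E\times_C E$, so $p$ is a monomorphism, and being also epic it is an isomorphism. Then $f:=q\circ p^{-1}\colon C\to C'$ is a morphism of $\cc{C}$ with $F(f)(a)=F(q)(e)=a'$ and $F(f)(b)=F(q)(e')=b'$, as required.

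The main obstacle is the preliminary step: everything afterwards is formal manipulation of the cone axioms, but matching the abstract presentation of $\mathcal{O}(Aut(F))$ with the sheaf frame on $\cc{Q}$ --- equivalently, showing that connectedness of $C$ in $\cc{E}$ already forces the $Aut(F)$-action on $FC$ to be transitive enough that no generator $\langle C,\,a|b\rangle$ collapses to $0$ --- is exactly where the hypothesis that $\cc{E}$ is \emph{atomic} is genuinely used (it is what makes the diagram of $F$ a poset and the associated site satisfy amalgamation), and it is the content of \cite[6.9, 6.11]{D1}.
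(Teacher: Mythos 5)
This theorem is not proved in the paper at all: it is quoted verbatim from \cite{D1} (6.9 and 6.11), so there is no in-paper argument to compare yours against. Judged on its own terms, your proposal splits into two halves of very different status. The derivation of part 2) from part 1) is complete and correct: the reduction $\langle C,\,a|b\rangle=\langle C\times C',\,(a,a')|(b,b')\rangle$ via compatibility (\ref{trianguloesdiamante} plus \ref{Diamondisbijection}), the passage to the connected component $E$ of $(a,a')$ using the dichotomy recorded after \ref{extension}, the use of $\Diamond_1(p)$ together with axiom $uv)$ and part 1) to show that the fibre of $F(p)$ over $b$ is the singleton $\{e'\}$, and the Boolean argument that $K=0$ forces $p$ to be monic (hence iso, toposes being balanced) are all sound, and every ingredient is available from the paper's propositions. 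This is genuinely useful: it isolates part 1) as the only place where the explicit construction of $Aut(F)$ is needed.

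Part 1), however, is where the actual content of \cite{D1} lives, and your proposal does not prove it. The assertion that the locale presented by the generators $\langle C,\,a|b\rangle$ modulo the $\rhd$-inequalities and the four axioms of \ref{bijection} is exactly the frame of the atomic topology on your poset $\cc{Q}$ is precisely the statement that the imposed relations kill no generator; that is, it already contains part 1), and asserting it while deferring its verification to \cite{D1} 6.9 makes the argument circular, since \cite{D1} 6.9 \emph{is} part 1). To close the gap one must actually check that each of the four axioms, read as a cover of an element of $\cc{Q}$, is an inhabited cover (this is where surjectivity of the $F(f)$ between connected objects and the cofiltered poset structure of the diagram of $F$ enter) and that the topology they generate is contained in the atomic one --- or, equivalently, carry out in detail the back-and-forth construction of a point of the open $\langle C,\,a|b\rangle$ that you mention only in passing. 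As it stands, the proposal establishes the implication $1)\Rightarrow 2)$ but not the theorem itself.
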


\noindent The following theorem follows from \ref{keyLGT} by a formal topos theoretic reasoning. 

\begin{theorem}[\cite{D1} 8.3] \label{fundamentalLGT}
The (Galois) lifting functor $\widetilde{F}$ is an equivalence if and only if the topos $\cc{E}$ is connected atomic.\cqd
\end{theorem}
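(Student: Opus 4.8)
\emph{Proof proposal.} I would prove the two implications separately; the forward implication is soft, and the reverse one is where Theorem~\ref{keyLGT} does its work. For ($\Rightarrow$): if $\widetilde{F}$ is an equivalence then $\cc{E}\simeq\beta^G$, and for any localic group $G$ the topos $\beta^G$ is connected (the one-point set with trivial action has only the two $G$-stable subsets) and atomic (every object is the coproduct of its orbits, which are transitive $G$-sets forming an atomic site; this is part of the Joyal--Tierney picture, \cite{JT}). Hence $\cc{E}$ is connected atomic and there is nothing more to do here.

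For ($\Leftarrow$), assume $\cc{E}$ connected atomic and let $\cc{C}\subset\cc{E}$ be the small site of connected objects with the canonical topology; recall that the diagram of $F|_{\cc{C}}$ is a poset. The plan is to reduce everything to the site via the comparison lemma. First I would note that $\widetilde{F}$ preserves finite limits and all colimits (both are computed on underlying sets in $\beta^G$, and $F$ preserves both), so $\widetilde{F}$ is the inverse image of a geometric morphism $\beta^G\to\cc{E}$; then it suffices to check that $\widetilde{F}|_{\cc{C}}\colon\cc{C}\to\beta^G$ is full and faithful, lands in atoms, and has image a dense generating family with covers preserved and reflected. Faithfulness of $\widetilde{F}$ is immediate from faithfulness of $F$ (a point of an atomic topos is surjective). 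For fullness on $\cc{C}$: a morphism $h\colon\widetilde{F}(C)\to\widetilde{F}(C')$ in $\beta^G$ is a $G$-equivariant map $FC\to FC'$, so $\langle C,\,a|b\rangle\le\langle C',\,h(a)|h(b)\rangle$ for all $a,b\in FC$. Fixing $a$, the instance $\langle C,\,a|a\rangle\le\langle C',\,h(a)|h(a)\rangle$ together with Theorem~\ref{keyLGT}(2) produces $f\colon C\to C'$ with $F(f)(a)=h(a)$; applying Theorem~\ref{keyLGT}(2) again to $\langle C,\,a|b\rangle\le\langle C',\,F(f)(a)|h(b)\rangle$ gives some $g\colon C\to C'$ with $F(g)(a)=F(f)(a)$ and $F(g)(b)=h(b)$, and the poset property of the diagram of $F|_{\cc{C}}$ forces $g=f$, whence $h(b)=F(f)(b)$ for all $b$, i.e.\ $h=\widetilde{F}(f)$. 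Uniqueness of $f$ follows from faithfulness of $F$ on $\cc{C}$. Next, Theorem~\ref{keyLGT}(1) ($\langle C,\,a|b\rangle\ne 0$) says precisely that each $\widetilde{F}(C)$ is a transitive $G$-set, i.e.\ an atom of $\beta^G$.

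The remaining point — that these atoms exhaust $\beta^G$ up to isomorphism (equivalently, that $\widetilde{F}|_{\cc{C}}$ is a dense generating subcategory) — is where I would invoke the explicit construction of $Aut(F)$ from \cite{D1}: the open subgroups of $Aut(F)$, which classify the transitive $G$-sets, are exactly the stabilizer-type subgroups attached to the pairs $(C,a)$ with $C\in\cc{C}$ and $a\in FC$, and the corresponding atom $G/\mathrm{stab}(C,a)$ is $\widetilde{F}(C)$; Theorem~\ref{keyLGT}(2) is used once more to match up the comparison maps. Feeding ``full, faithful, image a dense family of atoms, covers preserved and reflected'' into the comparison lemma yields $\cc{E}=\mathrm{Sh}(\cc{C})\simeq\beta^G$ compatibly with $\widetilde{F}$, so $\widetilde{F}$ is an equivalence.

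I expect the main obstacle to be exactly this last density/essential-surjectivity step. Everything through full-and-faithfulness of $\widetilde{F}|_{\cc{C}}$ is a clean and formal consequence of Theorem~\ref{keyLGT} and the poset property; but showing that the $\widetilde{F}(C)$ realize \emph{all} atoms of $\beta^G$ is not formal from Theorem~\ref{keyLGT} alone and requires the concrete description of $Aut(F)$ and its lattice of open subgroups in \cite{D1} (this is precisely the ``formal topos-theoretic reasoning'' alluded to, once the Galois-theoretic input \ref{keyLGT} is in hand).
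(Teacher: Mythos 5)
Your proposal should first be measured against what the paper actually does here: it does not prove Theorem \ref{fundamentalLGT} at all, but imports it from \cite{D1} (Theorem 8.3), remarking only that it follows from Theorem \ref{keyLGT} by ``formal topos theoretic reasoning''. So what you wrote is a reconstruction of the argument of \cite{D1}, and much of it is sound and does capture the intended use of \ref{keyLGT}: the forward implication is indeed soft (subobjects of $1$ in $\beta^G$ are computed on underlying sets, and the orbit decomposition of a $G$-set --- available because the restriction of an action to an orbit is again an $\ell$-bijection, cf.\ propositions \ref{mono1} and \ref{mono2} --- shows $\beta^G$ is atomic); faithfulness is clear; your derivation of fullness on the site $\cc{C}$ from \ref{keyLGT}(2) together with the poset property of the diagram of $F|_{\cc{C}}$ is correct; and \ref{keyLGT}(1) combined with axiom $uv)$ does show each $\widetilde{F}(C)$ has no proper nonempty sub-$G$-set, i.e.\ is an atom.

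The genuine gap is the one you yourself flag, and it is not repaired by the sentence you offer in its place. The claim that ``the open subgroups of $Aut(F)$ classify the transitive $G$-sets and are exactly the stabilizer-type subgroups of pairs $(C,a)$'' is the topological-group picture; for a localic group, which need not have enough points, it is not a statement one can simply quote, and it is essentially equivalent to the density assertion you are trying to prove. What is needed is a localic argument of the following kind: given $X \in \beta^G$ with action $\mu$ and $x \in X$, use the counit axiom $e\mu = e$ together with the fact that the elements $\langle C,\,a|b\rangle$ ($C$ connected) generate $Aut(F)$ by suprema of finite infima (and that $\langle C,\,a|a\rangle \wedge \langle D,\,c|c\rangle = \langle E,\,(a,c)|(a,c)\rangle$ for the connected component $E$ of $(a,c)$ in $C \times D$) to produce a pair $(C,a)$ with $\langle C,\,a|a\rangle \le \mu\langle x|x\rangle$; then use the Hopf algebra (comultiplication) structure, and \ref{keyLGT} again, to show that this ``containment of stabilizers'' determines a $G$-equivariant map $\widetilde{F}(C) \to X$ with $a \mapsto x$, so that the $\widetilde{F}(C)$ cover every object of $\beta^G$. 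Until that step is carried out, your outline gives full faithfulness of $\widetilde{F}$ on the site but not the density/essential-surjectivity half of the comparison-lemma argument, so it does not yet prove the theorem; the paper itself sidesteps all of this by citing \cite{D1} 8.3 for the complete statement.
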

From \ref{AA} and \ref{fundamentalLGT} we have:
\begin{theorem} \label{BB}
The (Tannaka) lifting functor $\widetilde{T}$ is an equivalence if and only if the topos $\cc{E}$ is connected atomic. \cqd
\end{theorem}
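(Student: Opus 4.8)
The proof of Theorem \ref{BB} is immediate from the two results that precede it. The plan is simply to chain the equivalences already established: Theorem \ref{AA} asserts that the Galois lifting $\widetilde{F}$ is an equivalence if and only if the Tannaka lifting $\widetilde{T}$ is an equivalence, and Theorem \ref{fundamentalLGT} (the fundamental theorem of localic Galois theory, quoted from \cite{D1}) asserts that $\widetilde{F}$ is an equivalence if and only if $\cc{E}$ is connected atomic. Composing these two biconditionals yields that $\widetilde{T}$ is an equivalence if and only if $\cc{E}$ is connected atomic, which is the statement of Theorem \ref{BB}.

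There is, strictly speaking, nothing left to do: the content of the theorem has been entirely offloaded onto Theorems \ref{AA} and \ref{fundamentalLGT}, and the excerpt already flags this by writing ``From \ref{AA} and \ref{fundamentalLGT} we have''. If I wanted to be scrupulous I would only note that the two liftings in question are indeed the same functors appearing in the big diagram at the start of Section~6 --- that is, that $\widetilde{T}$ here denotes the Tannaka lifting $\cc{R}el(\cc{E}) \mr{\widetilde{T}} Cmd_0(H)$ with $H = End^\lor(T)$ and $T = \cc{R}el(F)$, and that the identification $Cmd_0(H) = Cmd_0(G)$ used implicitly comes from Theorem \ref{G=H} (the isomorphism $End^\lor(T) \cong Aut(F)$), while the identification $Cmd_0(G) = \cc{R}el(\beta^G)$ comes from Theorem \ref{Comd=Rel}. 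With these identifications in place the diagram commutes, so ``$\widetilde{T}$ is an equivalence'' is an intrinsic statement about that functor, independent of which description one uses.

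Thus the only ``obstacle'' is purely bookkeeping: making sure the reader sees that Theorem \ref{AA} is exactly the bridge needed, and that its two inputs (Theorem \ref{Comd=Rel} and Theorem \ref{G=H}) have already been proved in Sections~4 and~5 respectively. Since those are all established earlier in the paper and may be assumed, the proof of Theorem \ref{BB} consists of the single sentence combining \ref{AA} with \ref{fundamentalLGT}, and I would present it as such (a one-line proof, possibly just ``\emph{See the discussion preceding the statement.}'' or an explicit ``Combine Theorems \ref{AA} and \ref{fundamentalLGT}.\cqd''). I would also, in a remark rather than the proof proper, point out the payoff emphasized in the introduction: since non-atomic pointed topoi exist, this simultaneously exhibits the Tannaka recognition theorem as \emph{failing} outside the connected atomic case, giving a genuinely new landscape for the Tannakian recognition problem in which the unit of the tensor product is not of finite presentation.
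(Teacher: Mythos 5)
Your proof is exactly the paper's: Theorem \ref{BB} is stated there with the single line ``From \ref{AA} and \ref{fundamentalLGT} we have'' and a \cqd, i.e.\ it is obtained by composing the two biconditionals precisely as you describe. Your additional bookkeeping remarks about the identifications $Cmd_0(H) = Cmd_0(G) = \cc{R}el(\beta^G)$ via Theorems \ref{G=H} and \ref{Comd=Rel} are accurate and consistent with the diagram at the start of Section~6.
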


\appendix

\section{Tannaka theory} \label{appendix}

{\bf The Hopf algebra of automorphisms of a $\cc{V}$-functor.} 

(For details see for example \cite{SP}, \cite{S}). Let $\Vat$ be a cocomplete monoidal closed category with tensor product $\otimes$, unit object $I$ and internal hom-functor $hom$. By definition for every object $V \in \cc{V}$,  $hom(V, -)$ is right adjoint to $(-) \otimes V$. That is, for every $X,\;Y$, $hom(X \otimes V,\, Y) = hom(X,\, hom(V,\, Y))$.

A pairing between two objects $V$, $W$ is a pair of arrows $W \otimes V \mr{\varepsilon} I$ and $I \mr{\eta} V \otimes W$ satisfying the usual triangular equations. We say that $W$ is the \emph{left} dual of $V$, and denote $W = V^\vee$, and that $V$ is \emph{right} dual of $W$ and denote $V = W^\wedge$. When $X$ has a left dual, then $X^\vee = hom(X,\, I)$.

The following are basic equations:

If $X$ has a right dual: \hfill 
$Y$ has a left dual $\iff$ 
$hom(Y,\, X)^\wedge = Y \otimes X^\wedge$,

\hfill $X = X^{\wedge^{\scriptstyle\vee}}$,
$hom(X^\wedge,\,Y) = Y \otimes X$.   

\vspace{1ex}

If $X$ has a left dual:  \hfill
$X = X^{\vee^{\scriptstyle\wedge}}$,
$hom(X,\,Y) = Y \otimes X^\vee$.

\vspace{1ex}

Recall that the object of natural transformations between $\cc{V}$-valued functors $L,\,T: \cc{X} \to \cc{V}$, is given, if it exists, by the following end 
\begin{equation}
Nat(L,\,T) = \displaystyle\int_X hom(LX,TX)\,.
\end{equation}

We consider a pair $(\cc{V}_0,\, \cc{V})$, where $\cc{V}_0 \subset \cc{V}$ is a full subcategory such that all its objects have a right dual. 

Let $\cc{X}$ be a $\cc{V}$-category such that for any two functors  
$\cc{X} \mr{L} \cc{V}$ and $\cc{X} \mr{T} \cc{V}_0$ the coend in the following definition exists in $\cc{V}$ (for example, if $\cc{X}$ is small). Then, we define (in Joyal's terminology) the \emph{Nat predual} as follows:
\begin{equation} \label{predual}
Nat^\lor(L,T) = \int^X LX \otimes (TX)^\wedge = 
                                         \int^X hom(LX,\, TX)^\wedge\,.
\end{equation} 

However, the last expression is valid only if $LX$ has a left dual for every $X$ (for example, if $\cc{X} \mr{L} \cc{V}_0$ and every object in 
$\cc{V}_0$ also has a left dual).

\vspace{1ex}

Given $V \in \cc{V}$, recall that there is a functor 
$\cc{X} \mr{V \otimes T} \cc{V}$ defined by \mbox{$(V \otimes T)(X) = V \otimes TX$}, We have:

\begin{proposition} \label{predualprop}
Given $T \in {\Vat_0}^{\cc{X}}$, we have a $\cc{V}$-adjunction $$\xymatrix { {\Vat}^{\cc{X}} \ar@/^/[r]^{Nat^{\lor}(-,T)}_\bot & \Vat \ar@/^/[l]^{(-) \otimes T} }.$$ 
\end{proposition}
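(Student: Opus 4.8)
The plan is to establish the asserted $\cc{V}$-adjunction by exhibiting a natural isomorphism
$$
\cc{V}\big(Nat^\lor(L,T),\, V\big) \;\cong\; \cc{V}^{\cc{X}}\big(L,\, V \otimes T\big),
$$
$\cc{V}$-naturally in $L \in \cc{V}^{\cc{X}}$ and $V \in \cc{V}$, and then upgrade it to the enriched statement. First I would unfold the left-hand side using the definition \eqref{predual}: since $Nat^\lor(L,T) = \int^X LX \otimes (TX)^\wedge$ is a coend, the universal property of coends in $\cc{V}$ together with the closed structure gives
$$
\cc{V}\Big(\int^X LX \otimes (TX)^\wedge,\; V\Big) \;\cong\; \int_X \cc{V}\big(LX \otimes (TX)^\wedge,\; V\big)
$$
as an end in $\cc{V}$ (or in $\cc{E}ns$ for the underlying-set version). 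The key step is then the duality adjunction: because each $TX$ lies in $\cc{V}_0$ it has a right dual $(TX)^\wedge$, so $(-) \otimes (TX)^\wedge$ is left adjoint to $(-) \otimes TX$ (this is the standard fact that tensoring with a dualizable object is an ambidextrous adjoint, and it is exactly the kind of triangular-identity manipulation recorded in the ``basic equations'' above). Hence
$$
\cc{V}\big(LX \otimes (TX)^\wedge,\, V\big) \;\cong\; \cc{V}\big(LX,\, V \otimes TX\big),
$$
$\cc{V}$-naturally in $X$. Substituting and using that an end of hom-objects computes the hom in the functor category, $\int_X \cc{V}(LX,\, V\otimes TX) \cong \cc{V}^{\cc{X}}(L,\, V\otimes T)$, yields the desired bijection.

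The remaining work is to check that all the isomorphisms above are $\cc{V}$-natural, not merely natural in the underlying categories, so that the adjunction is a genuine $\cc{V}$-adjunction. For this I would either (i) run the same computation one level up, replacing hom-sets by internal hom-objects $hom(-,-)$ throughout — the coend/end interchange, the duality isomorphism, and the end description of $hom$ in $\cc{V}^{\cc{X}}$ all have enriched versions — or (ii) invoke the standard criterion that a $\cc{V}$-functor which admits ordinary representing objects computed by a $\cc{V}$-natural formula is automatically a $\cc{V}$-adjoint. Naturality in $V$ is immediate since $V$ enters only through $(-)\otimes(-)$ and the closed structure; naturality in $L$ follows from functoriality of the coend.

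The main obstacle I anticipate is purely bookkeeping rather than conceptual: making the duality isomorphism $\cc{V}(A \otimes W^\wedge, V) \cong \cc{V}(A, V \otimes W)$ genuinely $\cc{V}$-natural in all three variables and compatible with the coend/end manipulations, i.e. verifying that the unit and counit built from $\eta, \varepsilon$ satisfy the triangular identities after being tensored and integrated. One should also be slightly careful that the coend defining $Nat^\lor(L,T)$ exists — but this is part of the hypothesis on $\cc{X}$ (and holds when $\cc{X}$ is small), so no extra argument is needed there.
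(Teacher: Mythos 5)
Your proposal is correct and follows essentially the same route as the paper: the paper's proof is precisely the chain $hom(\int^X LX \otimes TX^\wedge, V) = \int_X hom(LX \otimes TX^\wedge, V) = \int_X hom(LX, V \otimes TX) = Nat(L, V \otimes T)$, carried out directly with internal homs (your option (i)), using the closed structure together with the basic equation $hom(TX^\wedge, V) = V \otimes TX$ in place of your one-step duality adjunction. The two formulations of the key step are equivalent, so there is nothing substantive to add.
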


\vspace{-3ex}

\begin{proof} ${}$

\noindent
$hom(Nat^\vee(L,\,T), V) 
= hom(\displaystyle\int^X LX \otimes TX^\wedge,\, V) 
= \displaystyle\int_X hom(LX \otimes TX^{\wedge}, \, V)$ 

\noindent \mbox{$ = \displaystyle\int_X hom(LX,\, hom(TX^\wedge, \, V) 
= \displaystyle\int_X hom(LX,\, V \otimes TX) 
= Nat(L,\, V \otimes T)$.}
\end{proof}

In particular we have that the end $Nat(L,\, T)$ exists and \mbox{$Nat(L,\, T) = hom(Nat^\lor(L,\,T), \,I)$.}
It follows that $Nat^\vee(L,\,T)$ classifies natural transformations $L \implies T$ in the sense that they correspond to arrows $Nat^\vee(L,\,T) \mr{} I$ in $\cc{V}$. This does not mean that $Nat(L,\, T)$ is the left dual of $Nat^\vee(L,\,T)$,  
which in general will not have a left dual. 
\emph{Passing from $Nat^\vee(L,\,T)$ to $Nat(L,\, T)$ looses information.}

\vspace{1ex}

The unit of the adjunction $L \Mr{\eta} Nat^\lor (L,\,T) \otimes T$ is a \mbox{coevaluation,} and if $\cc{X} \mr{H} \Vat_0$, it induces (in the usual manner) a \mbox{cocomposition} 
\mbox{$Nat^\lor (L,\,H) \mr{w} Nat^\lor(L,\,T) \otimes Nat^\lor (T,\,H)$.} There is a counit 
$Nat^\lor (T,\,T) \mr{\varepsilon} I$ determined by the arrows $TC \otimes TC^\lor \mr{\varepsilon} I$ of the duality.
All the preceding means exactly that the functors $\cc{X} \mr{} \cc{V}_0$ are the objects of a $\Vat$-cocategory.

\vspace{1ex}

We define $End^\lor(T) = Nat^\lor (T,\,T)$, which is therefore a coalgebra in $\Vat$. The coevaluation in this case becomes a $End^\lor(T)$-comodule structure \mbox{$TC\mr{\eta_C} End^\lor (T) \otimes TC$ on $TC$.} In this way there is a lifting of the functor $T$ into $Cmd_0(H)$, $\cc{X} \mr{\tilde{T}} Cmd_0(H)$, for $H = End^\lor(T)$, and $Cmd_0(H)$ the full subcategory of comodules with underlying object in $\cc{V}_0$.

\vspace{1ex}

\begin{proposition}\label{bialg}
 If $\Xat$ and $T$ are monoidal, and $\Vat$ has a symmetry, then $End^\lor(T)$ is a bialgebra. If in addition $\Xat$ has a symmetry and $T$ respects it, $End^\lor(T)$ is commutative (as an algebra). \cqd
\end{proposition}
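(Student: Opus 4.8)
The plan is to equip $End^\lor(T)=\int^X TX\otimes (TX)^\wedge$ with an algebra structure and then check that it is compatible with the coalgebra structure $(w,\varepsilon)$ already constructed above. Write $\kappa_X\colon TX\otimes (TX)^\wedge\to End^\lor(T)$ for the components of the universal cocone. I will use throughout that, since $\Vat$ is monoidal closed and symmetric, $(-)\otimes(-)$ preserves colimits in each variable, so that $End^\lor(T)\otimes End^\lor(T)$ is the coend $\int^{X,Y}(TX\otimes (TX)^\wedge)\otimes(TY\otimes (TY)^\wedge)$ with cocone $\kappa_X\otimes\kappa_Y$; consequently a map out of $End^\lor(T)\otimes End^\lor(T)$ is exactly a cocone on this double coend, and two such maps agree as soon as they agree after each $\kappa_X\otimes\kappa_Y$. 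This ``check on components'' principle reduces every verification below to coherence in $\Xat$ and in $\Vat$.

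\emph{Construction of $m$ and $u$.} Fix $X,Y\in\Xat$. Since $T$ is monoidal there is a coherent isomorphism $TX\otimes TY\cong T(X\otimes Y)$; since the right dual of a tensor is the tensor of the right duals in the opposite order, $(TX\otimes TY)^\wedge\cong (TY)^\wedge\otimes (TX)^\wedge$, and the symmetry of $\Vat$ rewrites this as $(TX)^\wedge\otimes (TY)^\wedge$. Reassociating and applying the symmetry once more to interchange the two inner factors yields a natural isomorphism
\[
(TX\otimes (TX)^\wedge)\otimes(TY\otimes (TY)^\wedge)\;\cong\;(TX\otimes TY)\otimes((TX)^\wedge\otimes (TY)^\wedge)\;\cong\;T(X\otimes Y)\otimes T(X\otimes Y)^\wedge.
\]
Composing with $\kappa_{X\otimes Y}$ gives a cocone in $(X,Y)$, hence a map $m\colon End^\lor(T)\otimes End^\lor(T)\to End^\lor(T)$ characterized by $m\circ(\kappa_X\otimes\kappa_Y)=\kappa_{X\otimes Y}\circ(\text{the displayed isomorphism})$; in the element notation used in the body this reads $m([X,a,a'],[Y,b,b'])=[X\otimes Y,(a,b),(a',b')]$. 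For the unit, $T$ monoidal gives $T(I_\Xat)\cong I$, and $u\colon I\to End^\lor(T)$ is defined as $\kappa_{I_\Xat}$ precomposed with $I\cong I\otimes I^\wedge\cong T(I_\Xat)\otimes T(I_\Xat)^\wedge$; that is, $u(1)=[I_\Xat,\ast,\ast]$.

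\emph{Axioms.} Associativity of $m$ and the two unit laws are obtained by comparing the relevant composites after $\kappa_X\otimes\kappa_Y\otimes\kappa_Z$ (resp.\ after $\kappa_X\otimes\kappa_{I_\Xat}$): on each component both sides reduce to $\kappa_{X\otimes Y\otimes Z}$ (resp.\ $\kappa_X$) precomposed with the canonical isomorphism assembled from the associator/unitors of $\Xat$, the comparison isomorphisms of $T$, and the coherence and symmetry isomorphisms of $\Vat$, and Mac Lane coherence in $\Xat$ and in $\Vat$ gives equality. For the bialgebra compatibility one checks, again componentwise, that $\varepsilon$ and $w$ are algebra morphisms: $\varepsilon\circ m=\varepsilon\otimes\varepsilon$ and $\varepsilon\circ u=\mathrm{id}_I$ are immediate from $\varepsilon_X$ being the evaluation of the duality, while $w\circ m=(m\otimes m)\circ(\text{symmetry on the two inner factors})\circ(w\otimes w)$ reduces, on the component at $(X,Y)$, to the compatibility of the cocomposition isomorphisms for $X$, $Y$ and $X\otimes Y$ with the comparison $TX\otimes TY\cong T(X\otimes Y)$, which is precisely the monoidality of $T$ together with the symmetry of $\Vat$. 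Finally, if $\Xat$ carries a symmetry $s_{X,Y}$ and $T$ respects it, then $T(s_{X,Y})$ intertwines $TX\otimes TY\cong T(X\otimes Y)$ with $TY\otimes TX\cong T(Y\otimes X)$, and the corresponding statement for $\wedge$-duals holds by naturality of duality and the symmetry of $\Vat$; chasing these through $\kappa_X\otimes\kappa_Y$ shows that $m$ composed with the symmetry of $\Vat$ on $End^\lor(T)\otimes End^\lor(T)$ equals $m$, i.e.\ $m$ is commutative.

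\emph{Main obstacle.} There is no conceptual difficulty: everything follows from the universal property of the coend and Mac Lane coherence, and the argument is the classical Tannaka reconstruction computation (see \cite{SP}, \cite{S}). The only real work is bookkeeping — pinning down the web of coherence data (the monoidal comparison for $T$, the ``dual of a tensor'' isomorphism, and the uses of the symmetry of $\Vat$ needed to restore the dual factors to the correct order) and checking it is deployed coherently, so that $m$ is well defined on the coend and the axioms hold on components. In our application (see \ref{tannakacontext}) all these isomorphisms are identities or trivial permutations of index sets, so the verification is transparent, which is why we omit it here.
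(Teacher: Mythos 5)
Your proposal is correct and follows exactly the route the paper has in mind: the paper itself omits the proof of Proposition \ref{bialg} (deferring to the standard Tannaka literature \cite{SP}, \cite{S}) and records only the construction of $m$ and $u$, which coincides with yours, including the characterization $m([X,a,a'],[Y,b,b'])=[X\otimes Y,(a,b),(a',b')]$ and $u(1)=[1,\ast,\ast]$. Your reduction of all axioms to componentwise checks via the universal property of the double coend (using that $\otimes$ preserves colimits in each variable because $\Vat$ is closed) plus coherence is the standard argument and is sound.
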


We will not prove this proposition here, but show how the multiplication and the unit are constructed, since they are used explicitly in \ref{tannakacontext}.
The multiplication \mbox{$End^\lor(T) \otimes End^\lor(T) \mr{m} End^\lor(T)$} is induced by the composites
$$
m_{X,Y}: TX \otimes TX^\wedge \otimes TY \otimes TY^\wedge \mr{\cong} T(X \otimes Y) \otimes T(X \otimes Y)^\wedge \mr{\lambda_{X \otimes Y}} End^\lor(T).
$$
The unit is given by the composition
$$
u: I \rightarrow I \otimes I^\wedge \mr{\cong} T(I) \otimes T(I)^\wedge \mr{\lambda_{I}} End^\lor(T).
$$
\begin{proposition}\label{hopf}
If in addition to the hypothesis of \ref{bialg} every object of $\Xat$ has a right dual, then $End^\lor(T)$ is a Hopf algebra. \cqd
\end{proposition}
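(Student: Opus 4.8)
\emph{Proof plan.} By Proposition \ref{bialg}, $End^\lor(T)$ is already a bialgebra --- commutative as an algebra, since $\Xat$ and $T$ are symmetric --- with comultiplication $w$, counit $\varepsilon$, multiplication $m$ and unit $u$ as described there, so the only thing missing for a Hopf algebra is an antipode $\iota\colon End^\lor(T) \mr{} End^\lor(T)$, and the plan is to build it out of the hypothesis that every object of $\Xat$ has a right dual. First I would record that, since $\Vat$ (and hence $\Xat$) is symmetric, a right dual is automatically a left dual, so choosing a right dual $X^\wedge$ for each $X$ produces a dualizing involution: a functor $(-)^\wedge\colon \Xat^{op} \mr{} \Xat$ with $(-)^{\wedge\wedge}$ canonically isomorphic to the identity. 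Because $T$ is strong symmetric monoidal it preserves this duality, so there are canonical isomorphisms $c_X\colon T(X^\wedge) \mr{} (TX)^\wedge$, natural in $X$, and, dually, $d_X\colon \big(T(X^\wedge)\big)^\wedge \mr{} TX$.

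I would then define, for each $X$, the composite
\[
\omega_X\colon\; TX \otimes (TX)^\wedge \longrightarrow (TX)^\wedge \otimes TX \xrightarrow{\,c_X^{-1}\,\otimes\,d_X^{-1}\,} T(X^\wedge) \otimes \big(T(X^\wedge)\big)^\wedge \xrightarrow{\ \lambda_{X^\wedge}\ } End^\lor(T),
\]
where the first map is the symmetry of $\Vat$. The crucial point is to check that $(\omega_X)_X$ is dinatural in $X$: for an arrow $f\colon X \mr{} Y$ of $\Xat$ one applies the dinaturality of the universal cocone $(\lambda_Z)_Z$ to $f^\wedge\colon Y^\wedge \mr{} X^\wedge$ and then transports the resulting identity along naturality of $c$, of $d$ and of the symmetry. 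Granting this, the universal property of the coend $End^\lor(T)=\int^X TX\otimes (TX)^\wedge$ yields a unique linear map $\iota$ with $\iota\circ\lambda_X=\omega_X$ for every $X$.

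It would then remain to verify the antipode equations $m\circ(\iota\otimes\mathrm{id})\circ w = u\circ\varepsilon = m\circ(\mathrm{id}\otimes\iota)\circ w$. As every map here is determined by precomposition with the $\lambda_X$, this reduces to an equality of morphisms $TX\otimes (TX)^\wedge \mr{} End^\lor(T)$. Plugging in the explicit formulas (the coevaluation $I\mr{}TX\otimes (TX)^\wedge$ entering $w$, the duality counit $(TX)^\wedge\otimes TX\mr{}I$ entering $\varepsilon$, and $m\circ(\lambda_X\otimes\lambda_Y)$ factoring through $\lambda_{X\otimes Y}$), the left-hand side unfolds into a string of $\lambda$'s, symmetries, the comparison isomorphisms $c$, $d$, and the unit and counit of the duality on $TX$; the triangular identities for $(\eta_{TX},\varepsilon_{TX})$ together with dinaturality of $\lambda$ should collapse it to $\lambda_I$ composed with the canonical isomorphism $I\cong TI\otimes (TI)^\wedge$, which is exactly $u\circ\varepsilon$ on $\lambda_X$. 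The second identity follows by the mirror computation; alternatively one may first establish $\iota^2=\mathrm{id}$ for $\iota$ built from duals and combine it with the commutativity of $End^\lor(T)$.

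The step I expect to be the real work is purely bookkeeping: keeping the right/left-dual distinction under control and threading the coherence isomorphisms $c_X\colon T(X^\wedge)\cong (TX)^\wedge$ through both the dinaturality check and the antipode equations, so that the zig-zag identities apply cleanly. Conceptually there is only one genuine input beyond formal string-diagram manipulation --- the passage from ``every object of $\Xat$ has a right dual'' to ``$(-)^\wedge$ is a well-behaved dualizing involution on $\Xat$'' --- and that is exactly where the symmetry of $\Vat$ is used; the rest is the same kind of elevator computation used repeatedly in the body of the paper.
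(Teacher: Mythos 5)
Your antipode is the paper's: Proposition \ref{hopf} is stated without proof, the paper only recording the inducing composites $\iota_X\colon TX\otimes TX^\wedge \cong T(X^\wedge)\otimes TX \mr{\lambda_{X^\wedge}} End^\lor(T)$, which is exactly your $\omega_X$ once the comparison isomorphisms $T(X^\wedge)\cong (TX)^\wedge$ and $\bigl(T(X^\wedge)\bigr)^\wedge\cong TX$ are written out. The dinaturality and antipode-axiom checks you outline are the standard ones the paper defers to the general Tannaka references (\cite{SP}, \cite{S}), so your proposal is correct and follows essentially the same route.
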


The antipode $End^\lor(T) \mr{\iota} End^\lor(T)$ is induced by the composites
$$
\iota_X: TX \otimes TX^\wedge  \mr{\cong} T(X^\wedge) \otimes TX \mr{\lambda_{X^\wedge}} End^\lor(T).
$$

\section{Elevators calculus} \label{ascensores}

This is a graphic notation\footnote{Invented by the first author in 1969 (which has remained for private draft use for understandable typographical reasons).} 
to write equations in monoidal categories, ignoring associativity and suppressing the tensor  symbol $\otimes$ and the neutral object $I$. Arrows are 
written as cells, the identity arrow as a double line, and the symmetry as crossed double lines. The notation, in particular, exhibits clearly the permutation associated 
to a composite of symmetries, allowing to see if any two composites are the same simply by checking that they codify the same permutation\footnote
         { This is justified by a simple coherence theorem for symmetrical categories (\cite{S} Proposition 2.3), particular case of \cite{JS2} Corollary 2.2 for braided categories.
         }. Compositions are read from top to bottom. 

Given arrows $C \mr{f} D$, $C' \mr{f'} D'\,$: 

The bifunctoriality of the tensor product is the basic equality:
\begin{equation} \label{ascensor}
\xymatrix@C=0ex
         {
             C \dcell{f} & C' \did
          \\
             D \did & C' \dcell{f'}
          \\
             D  &  D' 
         }
\xymatrix@R=6ex{\\ \;\;\;=\;\;\; \\}
\xymatrix@C=0ex
         {
             C \did & C'\dcell{f'}
          \\
             C \dcell{f} & D' \did
          \\
             D & D' 
         }
\xymatrix@R=6ex{ \\ \;\;\;=\;\;\; \\}
\xymatrix@C=0ex@R=0.9ex
         {
             {} & {}
          \\
               C   \ar@<4pt>@{-}'+<0pt,-6pt>[ddd] 
                   \ar@<-4pt>@{-}'+<0pt,-6pt>[ddd]^{f}
             & C'  \ar@<4pt>@{-}'+<0pt,-6pt>[ddd] 
                   \ar@<-4pt>@{-}'+<0pt,-6pt>[ddd]^{f'}
          \\ 
             {} & {}
          \\ 
             {} & {}
          \\
             D & D'.
         }
\end{equation}
This allows to move cells up and down when there are no obstacles, as if they were elevators. 

The naturality of the symmetry is the basic equality:
\begin{equation} \label{swap}
\xymatrix@C=0ex
         {
             C \dcell{f} & C' \did
          \\
             D \did & C' \dcell{f'}
          \\
             D \ar@{=} [dr] & D' \ar@{=} [dl]
          \\
             D' & D 
         }
\xymatrix@R=10ex{ \\ \;\;\;=\;\;\; \\}
\xymatrix@C=0ex
         {
             C \dcell{f} & C' \did
          \\
             D \ar@{=} [dr] & C' \ar@{=} [dl]
          \\
             C' \dcell{f'} & D \did
          \\
             D' & D 
         }
\xymatrix@R=10ex{ \\ \;\;\;=\;\;\; \\}
\xymatrix@C=0ex
         {
             C \ar@{=} [dr] & C' \ar@{=} [dl]
          \\
             C' \did & C \dcell{f}
          \\
             C' \dcell{f'} & D \did
          \\
             D' & D.
         }
\end{equation}
Cells going up or down pass through symmetries by changing the column.  

\vspace{1ex}

\emph{Combining the basic moves \eqref{ascensor} and \eqref{swap} we form configurations of cells that fit valid equations in order to prove new equations.}

The visual aspect of this calculus really helps to find how a given equation can (or cannot) be derived from another ones.

\end{document}